\newtheorem{defpro}[theorem]{Definition/Proposition}
\DeclareMathOperator{\tr}{Tr}
\DeclareMathOperator{\diag}{Diag}
\DeclareMathOperator{\dimm}{dim}
\DeclareMathOperator{\vect}{Vect}
\def\restriction#1#2{\mathchoice
              {\setbox1\hbox{${\displaystyle #1}_{\scriptstyle #2}$}
              \restrictionaux{#1}{#2}}
              {\setbox1\hbox{${\textstyle #1}_{\scriptstyle #2}$}
              \restrictionaux{#1}{#2}}
              {\setbox1\hbox{${\scriptstyle #1}_{\scriptscriptstyle #2}$}
              \restrictionaux{#1}{#2}}
              {\setbox1\hbox{${\scriptscriptstyle #1}_{\scriptscriptstyle #2}$}
              \restrictionaux{#1}{#2}}}
\def\restrictionaux#1#2{{#1\,\smash{\vrule height .8\ht1 depth .85\dp1}}_{\,#2}} 
\begin{document}
\section*{Introduction}

Among the various assumptions one could pose on random vectors $Z_1,\ldots, Z_m$ to study the concentration of a functional $\phi(Z_1,\ldots, Z_m)$ of limited variations (on $Z_1,\ldots, Z_m$), Concentration of measure hypotheses provide flexible properties that allow one to $(i)$ characterize a wide range of settings where, in particular, the independent entries hypothesis is relaxed, and $(ii)$ to obtain rich concentration inequalities with precise convergence bounds. 
The historical result of concentration of measure theory was first obtained on the uniform distribution on the sphere by Lévy \cite{Le51} and later Milman and Gromov extended the approach to other families of distributions, in particular involving isoperimetric inequalities and the Ricci curvature in \cite{gromov1983topological}. An important part of the theory was developed by Talagrand, whose results are not discussed here, and a full description of the various results can be found in the monographs \cite{LED05,BLM13}.

To present the simplest picture possible, we admit for the moment that what we call ``concentrated vectors'' (or ``Lipschitz concentrated vectors'') are transformations $X = F(Z) \in \mathbb R^p$ of a Gaussian vector $Z \sim \mathcal N(0,I_d)$ for a given $1$-Lipschitz (for the Euclidean norm) mapping $F : \mathbb R^d \to \mathbb R^p$. This class of random vectors derives from a core result of concentration of measure theory \cite[Corollary 2.6]{LED05}, which states that for any $\lambda$-Lipschitz mapping $f: \mathbb R^d \to \mathbb R$ (where $\mathbb R^d$ and $\mathbb R$ are endowed with the Euclidean norm $\|\cdot\|$ and the absolute value $|\cdot|$, respectively),
\begin{align}\label{eq:first_concentration_inequality}
  \forall t >0 : \mathbb P \left(\left\vert f(Z) - \mathbb E[f(Z)]\right\vert \geq t\right) \leq C e^{-(t/c\lambda)^2},
\end{align}
where $C=2$ and $c = \sqrt 2$ (these constants do not depend on the dimensions $d$ !). Note that the concentration rate is proportional to the Lipschitz parameter of $f$. In particular, this implies that the standard deviation of the random variable $f(Z)$ -- called the ``$\lambda$-Lipschitz observation of $Z$'' -- does not depend on the dimension $d$ (if $\lambda$ remains constant as $d$ tends to $\infty$).
We briefly denote this property as $Z \propto C \mathcal E_2(c)$ or, if we are in the quasi-asymptotic regime where the dimension $d$ (or $p$) is large, we do not pay attention to the constants appearing in the exponential bound (as long as $C,c \leq_{d \to \infty} O(1)$, the result would not be much different) and instead write $Z \propto \mathcal E_2$. 

We can then derive a variety of concentration inequalities for any observation $g(F(Z))$ for $g: \mathbb R^p \to \mathbb R$ Lipschitz. If $F$ is, say, $\sigma$-Lipschitz, with $\sigma$ possibly depending on the dimension, we have the concentration $X = F(Z) \propto \mathcal E_2(\sigma)$. This succinct notation ($X \propto \mathcal E_2(\sigma)$ is analogous to \eqref{eq:first_concentration_inequality} with $\sigma$ replacing $c$) shows only the central quantity describing the concentration of $X$, namely the ``observable diameter of $X$'': $O(\sigma)$. In fact, the implicit concentration inequalities constrain the standard deviations of \textit{any} $\nu$-Lipschitz observations of $X$ to be of the order of $O(\nu \sigma)$.




\medskip

The goal of this article is to go beyond the Lipschitz case and express, using our shorthand notation, the concentration of products of concentrated random vectors. As an illustrative example, let $Y = \underbrace{X \circ \cdots \circ X}_{m\textmd{ times}} \in \mathbb R^p$, for a given product ``$\circ $'' satisfying:
\begin{align}\label{eq:norm_d_algebre}
  \forall x_1,\ldots, x_m \in \mathbb R^p, \quad \|x_1 \circ \cdots \circ x_m \| \leq \|x_1\|\cdots \|x_m\|.
\end{align}
(For example, $\circ $ could be the entry-wise product, and the norm would be the infinite norm).
In particular, if $\mathbb E[X] = 0$, we will see that
\begin{align}\label{eq:concentration_X_X_X_X_X}
   Y \propto \mathcal E_{2}(p^{\frac{m-1}{2}} \sigma^m) + \mathcal E_{\frac{2}{m}}(\sigma^m),
 \end{align} where $\mu$ satisfies $\mathbb E[\|X\|] \leq O(\mu)$, which in our framework means that there exist two constants $C,c>0$ (independent of $p$) such that for any $1$ Lipschitz mapping $f : \mathbb R^p \to \mathbb R$, $\forall t >0$.
\begin{align*}
   \mathbb P \left(\left\vert f(Y) - \mathbb E[f(Y)]\right\vert \geq t\right) \leq C \exp \left(- \left(\frac{t}{cp^{\frac{m-1}{2}} \sigma^m}\right)^2\right) + C \exp \left(-\left(\frac{t}{c\sigma^m}\right)^{2/m}\right).
\end{align*}
We see here that the term $\mathcal E_{\frac{2}{m}}(\sigma^m)$ in \eqref{eq:concentration_X_X_X_X_X} controls the tail of the distribution of $f(Y)$, but its first moments are controlled by the term $\mathcal E_{2}(p^{\frac{m-1}{2}} \sigma^m)$. Specifically, its standard deviation is of the order of $O(p^{\frac{m-1}{2}} \sigma^m)$, the observable diameter of $Y$. In a sense, the result is quite intuitive, looking back at the algebraic inequality \eqref{eq:norm_d_algebre}: here $\mathbb E[\|X\|]\leq O(\sqrt{p})$ and the variations of $Y$ are bounded by $\mathbb E[\|X\|]^{m-1}$ times the variation of $X$.
This simple scheme generalizes to more complex products of random vectors $X_1,\ldots, X_m$ belonging to different normed vector spaces, where $Y$ may not be a multilinear mapping of $(X_1,\ldots, X_m)$ but still satisfy an inequality similar to \eqref{eq:norm_d_algebre} (with semi-norms possibly replacing some of the norms). The complete description of these possible settings is the central result of this paper: Theorem~\ref{the:Concentration_produit_de_vecteurs_d_algebre_optimise}.
 
\medskip
Quite similar results with this multiple exponential regime can be found in \cite{latala2006estimates} in the Gaussian case and later in \cite{adamczak2015concentration}, which extends Latala's result to more general hypotheses of concentration.
In several aspects, the approach of \cite{adamczak2015concentration} may seem more "structural" --but also somehow less flexible-- than ours, in particular because the lower bound given in the case of polynomials of Gaussian variables meets (up to a constant) the upper bound given in the case of d-differentiable functionals of more general variables. However, it does not seem that their result can recover ours - even if one does not consider the specific case of nondifferentiable functionals and operations on general algebras (and not only on random variables), which we are the only ones to treat, and the hypotheses of concentration, which are quite similar. We give an expression of their result and compare it with ours after Theorem~\ref{the:Concentration_produit_de_vecteurs_d_algebre_optimise}.

\medskip

\sloppypar{As a simple but fundamental application of our main result is given by the Hanson-Wright inequality expressing the concentration of $X^TAX$, where $A\in\mathcal M_{p}$ and $X$ is either a random vector of $\mathbb R^p$ or a random matrix of $M_{p,n}$. The historical result (see \cite{BLM13} or \cite{VER17}) was given on random vectors $X=(X_1,\ldots, X_p) \in \mathbb R^p$ with independent sub-Gaussian entries, satisfying some sub-Gaussian concentration inequality, say $X_i \propto \mathcal E_2(K)$, then one gets the concentration:
\begin{align}\label{eq:hanson_wright_1_intro}
   X^TAX \propto \mathcal E_2(K^2\|A\|_F) + \mathcal E_1(K^2\|A\|)
\end{align} 
Note that the standard deviation of $X^TAX$ is of order $\|A\|_F$ (where $\|\cdot\|_F$ is the Frobenius norm). Based on concentration of measure hypotheses (allowing for dependence between entries), good concentration inequalities were already obtained in \cite{VU14} (with a term $\mathcal E_2(\sqrt{\log n}\|A\|_F)$ replacing $\mathcal E_2(\|A\|_F)$) and then improven in \cite{ADA14} to exactly the same result as \eqref{eq:hanson_wright_1_intro}. 

Although we extend this concentration result to the case of random matrices $X, Y \in \mathcal{M}_{p,n}$, which is not a big improvement, unlike \cite{VU14} and \cite{ADA14}, we do not take convex concentration hypotheses (derived from a well-known result of Talagrand), because Theorem~\ref{the:Concentration_produit_de_vecteurs_d_algebre_optimise} could not be proven in this setting\footnote{A result analogous to Theorem~\ref{the:Concentration_produit_de_vecteurs_d_algebre_optimise} can be proven in the convex concentration setting and for the entry wise product in $\mathbb R^p$ or the matrix product in $\mathcal M_{p,n}$, but this is not the purpose of this article. }.}

To illustrate our central result with more general products (when $m\geq 3$), we consider the concentration of $X^TDY$, where $D \in \mathcal M_{n}$ is a diagonal random matrix and $X,Y \in \mathcal M_{p,n}$ are two random matrices, all satisfying $X,D,Y \propto \mathcal E_2$. In a last step, in the same setting, to go beyond the multilinear case, we consider the concentration of the resolvent $Q = (I_p - \frac{1}{n}XDX^T)^{-1}$ studied in \cite{PAJ09, GUE14}, but with a diagonal matrix $D$ possibly depending on $X,Y$. This setting appears in robust regression problems \cite{ELK13, MAI19, SED21}. 
With the possibly complex dependencies between the entries of $x_i$ they allow, the concentration of measure hypotheses are very light compared to the classical Gaussian hypotheses adopted in large dimensional statistics and statistical learning \cite{HUA17, DEN20}. 
To obtain a good concentration of $Q$, one must assume that the columns $x_1,\ldots, x_n$ of $X$ are all independent and that for all $i \in [n]$ there exists a diagonal random matrix $D^{(i)} \in \mathcal M_n$, not too far from $D$ and independent with $x_i$.

\medskip

\sloppypar{The remainder of the article is organized as follows. After presenting Lipschitz concentration of measure hypotheses and basic probabilistic inferences \textbf{(I)}, we introduce the class of \emph{linearly concentrated random vectors} \textbf{(II)} and explain how their norm can be controlled in generic normed vector spaces \textbf{(III)}. We then briefly discuss the fact that the random vector $(X_1,\ldots, X_m)$ (as a whole) is not always concentrated if one only assumes that each of the $X_i$'s, $i\in[m]$, is concentrated \textbf{(IV)}. This provides us with the ingredients to establish the concentration of $\phi(X_1,\ldots, X_m)$ in Theorem~\ref{the:Concentration_produit_de_vecteurs_d_algebre_optimise}, the core result of the article, and to provide a first set of elementary consequences \textbf{(V)}. As an application of Theorem~\ref{the:Concentration_produit_de_vecteurs_d_algebre_optimise}, we next provide a generalization of the Hanson-Wright Theorem \textbf{(VI)}. Then we end the article with a study of the concentration of $XDX^T$ \textbf{(VII)} and the resolvent $Q = (I_p - \frac{1}{n}XDX^T)^{-1}$ \textbf{(VIII)}. 
}

\section{Basics and notations of the concentration of measure framework}
To discuss concentration of measure, we choose here to adopt the viewpoint of Levy families where the goal is to track the influence of the vector dimension over the concentration. Specifically, given a sequence of random vectors $(Z_p)_{p\geq \mathbb N}$ where each $Z_p$ belongs to a space of dimension $p$ (typically $\mathbb R^p$), we wish to obtain inequalities of the form:
\begin{align}\label{eq:inegalite_de_concentration}
  \forall p \in \mathbb N, \forall t>0 : \mathbb P \left(\left\vert f_p(Z_p) - a_p\right\vert\geq t\right) \leq \alpha_p(t),
\end{align}
where, for every $p \in \mathbb N$, $\alpha_p : \mathbb R^+ \rightarrow [0,1]$ is called a \textit{concentration function}: it is left-continuous, decreasing, and tends to $0$ at infinity; $f_p : \mathbb R^p \rightarrow \mathbb R$ is a $1$-Lipschitz function; and $a_p$ is either a deterministic variable (typically $\mathbb E[f_p(Z_p)]$) or a random variable (for instance $f_p(Z_p')$ with $Z_p'$ an independent copy of $Z_p$). The sequences of random vectors $(Z_p)_{p\geq 0}$ satisfying inequality \eqref{eq:inegalite_de_concentration} for all sequences of $1$-Lipschitz functions $(f_p)_{p\geq 0}$ are called \textit{Levy families} or more simply \textit{concentrated vectors} (with this denomination, we implicitly omit the dependence on $p$ and abusively call ``vectors'' the \textit{sequences} of random vectors of growing dimension). 

Concentrated vectors admitting an exponentially decreasing concentration function $\alpha_p$ are extremely flexible objects. We dedicate the next two subsections to further definitions of the fundamental notions involved under this setting. These are of central interest to the present article -- this approach is primarily inspired by the Gaussian fundamental example satisfying \eqref{eq:first_concentration_inequality}. 

Our main interest is in two classes of concentrated vectors, characterized by the regularity of the class of admissible sequences of functions $(f_p)_{p\in \mathbb N}$ satisfying \eqref{eq:inegalite_de_concentration}. When \eqref{eq:inegalite_de_concentration} holds for all the $1$-Lipschitz mappings $f_p$, $Z_p$ is said to be \textit{Lipschitz concentrated};
when true for all $1$-Lipschitz \textit{linear} mappings $f_p$, $Z_p$ is said to be \textit{linearly concentrated} (the convex concentration, not studied here, occurs when \eqref{eq:inegalite_de_concentration} is satisfied for all $1$-Lipschitz convex mappings $f_p$ \cite{VU14}). As such, the concentration of a random vector $Z_p$ is only defined through the concentration of what we refer to as its ``observations'' $f_p(Z_p)$ for all $f_p$ in a specific class of functions. 


We will work with normed (or semi-normed) vector spaces, although concentration of measure theory is classically developed in metric spaces. The presence of a norm (or a semi-norm\footnote{A semi-norm becomes a norm when it satisfies the implication $\|x\| = 0 \Rightarrow x=0$.}) on the vector space is particularly important when establishing the concentration of a product of random vectors.
\begin{defpro}\label{def:concentrated_sequence}
  Given a sequence of normed or semi-normed vector spaces $(E_p, \Vert \cdot \Vert_p)_{p\geq 0}$, a sequence of random vectors\footnote{A random vector $Z$ of $E$ is a measurable function from a probability space $(\Omega, \mathcal F, \mathbb P)$ to the normed vector space $(E, \|\cdot\|)$ (endowed with the Borel $\sigma$-algebra); one should indeed write $Z: \Omega \to E$, but we abusively simply denote $Z\in E$.} $(Z_p)_{p\geq 0} \in \prod_{p\geq 0} E_p$, a sequence of positive reals $(\sigma_p)_{p\geq 0} \in \mathbb R_+ ^{\mathbb N}$ and a parameter $q>0$, we say that $Z_p$ is Lipschitz \emph{$q$-exponentially concentrated} with \emph{observable diameter} of order $O(\sigma_p)$ iff one of the following three equivalent assertions is satisfied:\footnote{Aside from the fact that they all give interesting interpretation of the concentration of a random vector, all three characterizations can be relevant, depending on the needs:
\begin{itemize}
  \item the characterization with the independent copy is employed in Remark~\ref{rem:vecteur_conditionne} and in the proof of Theorem~\ref{the:Concentration_produit_de_vecteurs_d_algebre_optimise};
  \item the characterization with the median is employed in the proof of Lemma~\ref{lem:concentration_sous_ensemble};
  \item the characterization with the expectation, likely the most intuitive, is used to establish Proposition~\ref{pro:estimation_XDY}, Theorem~\ref{the:concentration_resolvent} and Lemma~\ref{lem:concentration_Q_m_i_x_i}.
\end{itemize}}
  \begin{itemize}
     \item $\exists C,c>0 \ | \ \forall p \in \mathbb N, \forall \ 1\text{-Lipschitz} \ f : E_p \rightarrow \mathbb{R}, \forall t>0:$ 
     \begin{center}
      $\mathbb P \left(\left\vert f(Z_p) - f(Z_p')\right\vert\geq t\right) \leq C e^{-(t/c\sigma_p)^q}$,
     \end{center}
     \item $\exists C,c>0 \ | \ \forall p \in \mathbb N, \forall \  1\text{-Lipschitz} \ f : E_p \rightarrow \mathbb{R}, \forall t>0:$ 
     \begin{center}
      $\mathbb P \left(\left\vert f(Z_p) - m_{f}\right\vert\geq t\right) \leq C e^{-(t/c\sigma_p)^q}$,
     \end{center}
     \item $\exists C,c>0 \ | \ \forall p \in \mathbb N, \forall \ 1\text{-Lipschitz} \ f : E_p \rightarrow \mathbb{R}, \forall t>0:$ 
     \begin{center}
      $\mathbb P \left(\left\vert f(Z_p) - \mathbb E[f(Z_p)]\right\vert\geq t\right) \leq C e^{-(t/c\sigma_p)^q}$, 
     \end{center}
   \end{itemize} 
   where $Z_p'$ is an independent copy of $Z_p$ and $m_{f}$ is a median\footnote{$\mathbb P \left(f(Z_p) \geq m_{f}\right)\geq \frac12$ and $\mathbb P \left(f(Z_p) \leq m_{f}\right) \geq \frac{1}{2}$.} of $f(Z_p)$; the mappings $f$ are $1$-Lipschitz for the norm (or semi-norm) $\|\cdot\|_p$. We denote in this case $Z_p \propto \mathcal E_{q}(\sigma_p)$ (or more simply $Z \propto \mathcal E_{q}(\sigma)$).  
\end{defpro}
The equivalence between the three definitions is proven in \cite{LED05} (full details are given in \cite[Propositions~1.2, 1.18 Corollary~1.24]{louart2018concentration}), also the existence of the expectation in the third point is guaranteed if one of the two first item is valid thanks to Fubini Theorem that transforms concentration inequalities into bounds on the moments (\cite[Proposition 1.7]{LED05}).

\begin{remark}[Quasi-asymptotic regime]\label{rem:definiiton_of_the_order}
  Most of our results will be expressed under the quasi-asymptotic regime where $p$ is large. Sometimes, it will be natural to index the sequences of random vectors with two (or more) indices (e.g., the numbers of rows and columns for random matrices): in these cases, the quasi-asymptotic regime is not well defined since the different indices could have different convergence speed. This issue is overcome with the extensive use of the notation $O(\sigma_t)$, where $t \in \Theta$ designates the (possibly multivariate) index. Given two sequences $(a_t)_{t\in \Theta},(b_t)_{t\in \Theta} \in \mathbb R_+^\Theta$, we will denote $a_t \leq O(b_t)$ if there  exists a constant $C>0$ such that $\forall t\in\Theta$, $a_t \leq C b_t$ and $a_t \geq O(b_t)$ if $\forall t \in \Theta$, $a_t \geq C b_t$. A ``constant'' $K>0$ is a quantity that does not depend on our asymptotic variables, it satisfies therefore $K \leq O(1)$ and $K \geq O(1)$. For a concentrated random vector $Z_t \propto \mathcal E_q(\sigma_t)$, any sequence $(\nu_t)_{t\in\Theta} \in \mathbb R_+^\Theta$ such that $\sigma_t \leq O(\nu_t)$ is also an observable diameter of $Z_t$. When $\sigma_t \leq O(1)$, we simply write $Z_t \propto \mathcal E_q$.
\end{remark}
\begin{remark}[Metric versus normed spaces]\label{rem:definiiton_concentration_avec_metrique}
  It is more natural, as done in \cite{LED05}, to introduce the notion of concentration in metric spaces, as one only needs to resort to Lipschitz mappings which merely require a metric structure on $E$. 
  However, to exploit Theorem~\ref{the:Concentration_produit_de_vecteurs_d_algebre_optimise}, we will need to control the amplitude of concentrated vectors which is easily conducted when the metric is a norm, under linear concentration assumptions. 

\end{remark}
When a concentrated vector $Z_p\propto \mathcal E_q(\sigma_p)$ takes values only on some subset $A_p \equiv Z_p(\Omega)\subset E_p$ (where $\Omega$ is the universe), it might be useful to be able to establish the concentration of observations $f_p(Z_p)$ where $f_p$ is only $1$-Lipschitz on $A_p$ (and possibly non Lipschitz on $E_p \setminus A_p$). This would be an immediate consequence of Definition~\ref{def:concentrated_sequence} if one were able to extend $\restriction{f_p}A_p$ into a mapping $\tilde f_p$ Lipschitz on the whole vector space $E_p$; but this is rarely possible. Yet, the observation $f_p(Z_p)$ does concentrate under the hypotheses of Definition~\ref{def:concentrated_sequence}.
\begin{lemma}[Concentration of locally Lipschitz observations]\label{lem:concentration_sous_ensemble}
  Given a sequence of random vectors $Z_p: \Omega \to E_p$, satisfying $Z_p\propto \mathcal E_q(\sigma_p)$, for any sequence of mappings $f_p: E_p \to \mathbb R$, which are $1$-Lipschitz on $Z_p(\Omega)$, we have the concentration $f_p(Z_p) \propto \mathcal E_q(\sigma_p)$.
\end{lemma}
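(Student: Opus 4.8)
The plan is to reduce the locally-Lipschitz statement to the genuinely Lipschitz case covered by Definition~\ref{def:concentrated_sequence} by passing to the median characterization, which only ``sees'' the behaviour of $f_p$ near where the mass of $Z_p$ sits. Fix $p$ and write $A_p = Z_p(\Omega)$, $f = f_p$, $Z = Z_p$, $\sigma = \sigma_p$. First I would control the probability that $Z$ lands far (in $\|\cdot\|_p$) from a suitable ``center'': since the norm itself is a $1$-Lipschitz observation, $Z\propto\mathcal E_q(\sigma)$ gives $\mathbb P(\|Z - a\|_p \geq t) \leq C e^{-(t/c\sigma)^q}$ for an appropriate deterministic $a$ (e.g. built from a median of $\|Z-x_0\|_p$ for a fixed $x_0$), so $Z$ is, up to exponentially small probability, confined to a ball $B = B(a, t_0)$ with $t_0 = O(\sigma)$.

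Next I would build, for each threshold radius $r$, a globally $1$-Lipschitz extension of $f$ restricted to $A_p \cap B(a,r)$. The standard device is the inf-convolution (McShane--Whitney) formula: set $\tilde f_r(x) = \inf_{y \in A_p \cap B(a,r)} \big( f(y) + \|x - y\|_p \big)$. Because $f$ is $1$-Lipschitz on $A_p$, $\tilde f_r$ is $1$-Lipschitz on all of $E_p$ and agrees with $f$ on $A_p \cap B(a,r)$. Applying Definition~\ref{def:concentrated_sequence} (median form) to $\tilde f_r$ yields $\mathbb P(|\tilde f_r(Z) - m_{\tilde f_r}| \geq t) \leq C e^{-(t/c\sigma)^q}$. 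On the event $\{Z \in B(a,r)\}$ one has $\tilde f_r(Z) = f(Z)$, so combining this with the confinement bound above (taking $r = t_0 + t$, say, or more simply choosing $r$ once and for all large enough that the confinement failure probability is absorbed into the constant $C$ after enlarging it) gives $\mathbb P(|f(Z) - m| \geq t) \leq C' e^{-(t/c'\sigma)^q}$ for the constant $m = m_{\tilde f_r}$ and suitably adjusted $C', c'$. A small bookkeeping point: one must check $m$ can be taken independent of $t$; this is handled by fixing $r$ first (large, of order $\sigma$ times a sufficiently big absolute constant) so that $\mathbb P(Z \notin B(a,r)) \leq 1/4$, which forces $m_{\tilde f_r}$ to also be a near-median of $f(Z)$ and lets the tail beyond radius $r$ be folded into the exponential bound.

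The main obstacle is exactly this last coupling: $\tilde f_r$ equals $f$ only on the ball, and outside the ball $\tilde f_r$ is an artifact of the extension with no relation to $f$, so one cannot directly transfer a concentration inequality for $\tilde f_r$ into one for $f$ without separately arguing that the ``bad'' region $\{Z\notin B(a,r)\}$ is exponentially negligible — and crucially that the $q$-exponential decay rate there matches $\mathcal E_q(\sigma)$ rather than being degraded. This is precisely where $Z \propto \mathcal E_q(\sigma)$ applied to the $1$-Lipschitz map $x \mapsto \|x - a\|_p$ is essential, and why the statement is phrased for normed (or semi-normed) spaces; everything else is routine manipulation of the equivalent characterizations in Definition~\ref{def:concentrated_sequence} together with Remark~\ref{rem:esperence_definie_fonctionnelle} to pass, at the end, from the median formulation back to the expectation formulation, yielding $f_p(Z_p) \propto \mathcal E_q(\sigma_p)$.
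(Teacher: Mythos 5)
Your core strategy --- manufacture a globally $1$-Lipschitz surrogate that agrees with $f_p$ where $Z_p$ actually lives, and apply Definition~\ref{def:concentrated_sequence} to the surrogate --- is viable and genuinely different from the paper's. The paper applies the median characterization to the two distance functions $z\mapsto d(z,S_p)$ and $z\mapsto d(z,S_p^c)$, where $S_p$ is the sublevel set $\{f_p\leq m_{f_p}\}$ met with $Z_p(\Omega)$: these are $1$-Lipschitz on all of $E_p$, admit $0$ as a median, and the local Lipschitz property forces $d(z,S_p)\geq t$ whenever $f_p(z)\geq m_{f_p}+t$ (and symmetrically for $S_p^c$), so no extension and no truncation are needed. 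Your inf-convolution route can be made just as short: the McShane--Whitney extension taken over \emph{all} of $A_p=Z_p(\Omega)$, namely $\tilde f(x)=\inf_{y\in A_p}\bigl(f_p(y)+\|x-y\|_p\bigr)$, is finite (bounded below by $f_p(y_0)-\|x-y_0\|_p$ for any fixed $y_0\in A_p$), globally $1$-Lipschitz, and equal to $f_p$ on all of $A_p$, so $\tilde f(Z_p)=f_p(Z_p)$ almost surely and the lemma follows in one step; this incidentally shows that for real-valued observations the extension the paper dismisses as ``rarely possible'' always exists.

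The genuine problems are all created by your decision to truncate to a ball. First, the confinement claim is false as stated: $Z_p\propto\mathcal E_q(\sigma_p)$ does \emph{not} confine $Z_p$ to a ball of radius $t_0=O(\sigma_p)$; for $Z\sim\mathcal N(0,I_p)$ one has $\sigma=O(1)$ while the mass sits near a sphere of radius $\sqrt p$ --- precisely the observable-versus-metric-diameter distinction of Remark~\ref{rem:observable_diameter_gaussienne}. The argument only needs $\mathbb P(\|Z-a\|_p\geq t_0+t)\leq Ce^{-(t/c\sigma)^q}$ with $t_0$ a median of $\|Z-a\|_p$, which may be enormous, so this is repairable, but the announced radius is wrong. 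Second, the ``more simply, fix $r$ once and for all'' fallback does not work: for fixed $r$, $\mathbb P(Z\notin B(a,r))$ is a fixed positive constant, and a constant cannot be absorbed into a bound $C'e^{-(t/c'\sigma)^q}$ that must tend to $0$ as $t\to\infty$. You are therefore forced into the $r=r(t)$ variant, where the centering constant $m_{\tilde f_{r(t)}}$ genuinely depends on $t$; closing that gap requires a pivot argument (all the $\tilde f_{r}$ with $\mathbb P(Z\in B(a,r))\geq 3/4$ agree with $f_p(Z_p)$ on an event of probability at least $3/4$, so their medians all lie within $O(\sigma)$ of one fixed median of $f_p(Z_p)$), which you gesture at but then resolve by falling back on the broken fixed-$r$ option. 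Dropping the ball entirely, as above, removes all three issues at once.
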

\begin{proof}
  considering a sequence of median $m_{f_p}$ of $f_p(Z_p)$ and the (sequence of) sets $S_p = \{f_p \leq m_{f_p}\}\subset E_p$, if we note for any $z \in E_p$ and $U \subset E_p$, $U\neq \emptyset$, $d(z, U) = \inf\{\|z-y\|, y \in U\}$, then we have for any $z\in A$ and $t >0$:
  \begin{align*}
    f_p(z) \geq m_{f_p} + t&
    &\Longrightarrow&
    &d(z,S_p) \geq t\\
    f_p(z) \leq m_{f_p} - t&
    &\Longrightarrow&
    &d(z,S_p^c) \geq t,
  \end{align*}
  since $f_p$ is $1$-Lipschitz on $A$. Therefore, since $z \mapsto d(z,S_p)$ and $z \mapsto d(z,S_p^c)$ are both $1$-Lipschitz on $E$ and both admit $0$ as a median ($\mathbb P(d(Z_p, S_p) \geq 0) = 1 \geq \frac{1}{2}$ and $\mathbb P(d(Z_p, S_p) \leq 0) \geq \mathbb P(f_p(Z_p) \leq m_{f_p})\geq \frac{1}{2}$),
  \begin{align*}
    \mathbb P \left( \left\vert f_p(Z_p) - m_{f_p}\right\vert \geq t\right) 
    &\leq \mathbb P \left(d(Z_p, S_p) \geq t\right)+\mathbb P \left(d(Z_p, S_p) \geq t\right)
    \leq 2C e^{- (t/c\sigma_p)}.
  \end{align*}
\end{proof}

One could argue that, instead of Definition~\ref{def:concentrated_sequence}, we could have posed hypotheses on the concentration of $Z_p$ on $Z_p(\Omega)$ only; however, we considered the present definition of concentration already quite complex as it stands. This locality aspect must be kept in mind: it will be exploited to obtain the concentration of products of random vectors.

Lemma~\ref{lem:concentration_sous_ensemble}  is particularly interesting when working with conditioned variables.\footnote{Letting $X: \Omega \to E$ be a random vector and $\mathcal A \subset \Omega$ be a measurable subset of the universe $\Omega$, $\mathcal A\in \mathcal F$, when $\mathbb P(\mathcal A)>0$, the random vector $X \ | \mathcal A$ designates the random vector $X$ conditioned with $\mathcal A$ defined as the measurable mapping $(\mathcal A, \mathcal F_A, \mathbb P/ \mathbb P(\mathcal A)) \to (E,\|\cdot\|)$ satisfying: $\forall \omega \in \mathcal A$, $(X \ | \ \mathcal A)(\omega) = X(\omega)$. When there is no ambiguity, we will allow ourselves to designate abusively with the same notation ``$\mathcal A$'' the actual $\mathcal A \subset \Omega$ and the subset $X(\mathcal A) \subset E$.}

\begin{remark}[Concentration of conditioned vectors]\label{rem:vecteur_conditionne}
  Given a (sequence of) random vectors $Z \propto \mathcal E_q(\sigma)$ and a (sequence of) events $\mathcal A $ such that $\mathbb P(\mathcal A) \geq O(1)$, it is straightforward to show that $(Z \ | \ \mathcal A) \propto  \mathcal E_q(\sigma)$, since there exist two constants $C,c>0$ such that for any $p \in \mathbb N$ and any $1$-Lipschitz mapping $f : E_p \to \mathbb R$:
  \begin{align*}
    \forall t>0: \mathbb P \left(\left\vert f(Z_p) - f(Z_p')\right\vert \geq t \ | \ \mathcal A\right) \leq \frac{1}{\mathbb P(A)} \mathbb P \left(\left\vert f(Z_p) - f(Z_p')\right\vert \geq t \right) \leq C e^{-c(t/\sigma)^q}. 
  \end{align*}
  This being said, Lemma~\ref{lem:concentration_sous_ensemble} allows us to obtain the same concentration inequality for any mapping $f: E_p \to \mathbb R$ $1$-Lipschitz on $Z_p(\mathcal A)$ (that will be abusively denoted $\mathcal A$ later on).
\end{remark}
A simple but fundamental consequence of Definition~\ref{def:concentrated_sequence} is that, as announced in the introduction, any Lipschitz transformation of a concentrated vector is also a concentrated vector. The Lipschitz coefficient of the transformation controls the concentration.
\begin{proposition}[Stability through Lipschitz mappings]\label{pro:stabilite_lipschitz_mappings}
  In the setting of Definition~\ref{def:concentrated_sequence}, given a sequence $(\lambda_p)_{p\geq 0} \in \mathbb R_+^{\mathbb N}$, a supplementary sequence of normed vector spaces $(E_p', \Vert \cdot \Vert_p')_{p\geq 0}$ and a sequence of $\lambda_p$-Lipschitz transformations $F_p : (E_p, \Vert \cdot \Vert_p) \rightarrow (E_p', \Vert \cdot \Vert_p')$, we have
  \begin{align*}
    Z_p \propto \mathcal E_q(\sigma_p)&
    &\Longrightarrow&
    &F_p(Z_p) \propto \mathcal E_q(\lambda_p\sigma_p).
  \end{align*}
\end{proposition}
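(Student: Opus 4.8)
The plan is to chase the definitions through the composition. The statement says: if $Z_p \propto \mathcal E_q(\sigma_p)$ and $F_p$ is $\lambda_p$-Lipschitz, then $F_p(Z_p) \propto \mathcal E_q(\lambda_p\sigma_p)$. I would use the characterization of concentration via the expectation (the third item of Definition~\ref{def:concentrated_sequence}), since it is the cleanest to compose with; the other two work identically.

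First I would take an arbitrary $1$-Lipschitz mapping $g : (E_p', \|\cdot\|_p') \to \mathbb R$. The key observation is that the composition $g \circ F_p : (E_p, \|\cdot\|_p) \to \mathbb R$ is $\lambda_p$-Lipschitz: for $x,y \in E_p$, $|g(F_p(x)) - g(F_p(y))| \leq \|F_p(x) - F_p(y)\|_p' \leq \lambda_p \|x - y\|_p$. Hence $\frac{1}{\lambda_p} (g \circ F_p)$ is $1$-Lipschitz on $E_p$, and we may apply the concentration hypothesis on $Z_p$ to it. This gives constants $C,c>0$ (those furnished by $Z_p \propto \mathcal E_q(\sigma_p)$, independent of $p$) such that for all $t>0$,
\begin{align*}
  \mathbb P\left( \left| \tfrac{1}{\lambda_p} g(F_p(Z_p)) - \mathbb E\left[ \tfrac{1}{\lambda_p} g(F_p(Z_p)) \right] \right| \geq t \right) \leq C e^{-(t/c\sigma_p)^q}.
\end{align*}
Multiplying the event through by $\lambda_p$ and substituting $t \leftarrow t/\lambda_p$ yields
\begin{align*}
  \mathbb P\left( \left| g(F_p(Z_p)) - \mathbb E\left[ g(F_p(Z_p)) \right] \right| \geq t \right) \leq C e^{-(t/c\lambda_p\sigma_p)^q},
\end{align*}
which is exactly the statement that $F_p(Z_p) \propto \mathcal E_q(\lambda_p\sigma_p)$, with the same constants $C,c$. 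One should note that $\mathbb E[g(F_p(Z_p))]$ is well defined by Remark~\ref{rem:esperence_definie_fonctionnelle} applied to the $\lambda_p$-Lipschitz observation $g \circ F_p$ of $Z_p$. A mild edge case is $\lambda_p = 0$, for which $F_p$ is constant and the claim is trivial; otherwise the division above is legitimate.

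There is no real obstacle here: this is a pure unwinding of Definition~\ref{def:concentrated_sequence}, and the only thing to be careful about is that the constants $C,c$ must not depend on $p$ — which is automatic since they are inherited verbatim from the concentration of $Z_p$ and the Lipschitz composition bound is uniform in $p$. If one prefers, the identical argument runs with the median characterization (using that $m_{g(F_p(Z_p))} = \lambda_p \, m_{(1/\lambda_p) g(F_p(Z_p))}$) or with the independent-copy characterization (using an independent copy $Z_p'$ and $F_p(Z_p')$ as the independent copy of $F_p(Z_p)$), so the proof is robust to whichever of the three equivalent formulations the reader has in mind.
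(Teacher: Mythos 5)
Your proof is correct and is exactly the standard definition-chasing argument the paper has in mind (the paper in fact omits the proof as an immediate consequence of Definition~\ref{def:concentrated_sequence}): compose an arbitrary $1$-Lipschitz observation with $F_p$, note the composition is $\lambda_p$-Lipschitz, rescale, and apply the concentration of $Z_p$. The attention to the $\lambda_p=0$ edge case and to the uniformity of the constants in $p$ is appropriate; nothing is missing.
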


There exists a range of elemental concentrated random vectors, which may be found for instance in the monograph \cite{LED05}. We recall below some of the major examples. In the following theorems, we only consider sequences of random vectors of the normed vector spaces $(\mathbb R^p, \left\Vert \cdot\right\Vert)$. For readability of the results, we will omit the index $p$.
\begin{theorem}[Fundamental examples of concentrated vectors]\label{the:concentration_vecteur_gaaussien}
\begin{sloppypar} The following sequences of random vectors are concentrated and satisfy $ Z \propto \mathcal E_2$: 
\begin{itemize}
  \item $Z$ is uniformly distributed on the sphere $\sqrt p \mathbb S^{p-1}$.
  \item $Z \sim \mathcal N(0,I_p)$ has independent standard Gaussian entries.
  \item $Z$ is uniformly distributed on the ball $\sqrt p\mathcal B = \{x \in \mathbb R^p, \Vert x \Vert \leq \sqrt p\}$.
  \item $Z$ is uniformly distributed on $[0,\sqrt p]^p$.
  \item $Z$ has the density $d\mathbb P_Z(z) = e^{-U(z)}d\lambda_p(z)$ where $U : \mathbb R^p \rightarrow \mathbb R$ is a positive functional with Hessian bounded from below by, say, $cI_p$ with $c\geq O(1)$ and $d\lambda_p$ is the Lebesgue measure on $\mathbb R^p$.
\end{itemize}
\end{sloppypar}
Some fundamental results also give concentrations $Z \propto \mathcal E_1$ (when $Z \in \mathbb R^p$ has independent entries with density density $\frac 1 2 e^{-\vert \cdot \vert}d\lambda_1$, \cite{TAL95}) or $Z \propto \mathcal E_q\left(p^{-\frac{1}{q}}\right)$ (when $Z\in \mathbb R^p$ is uniformly distributed on the unit ball of the norm $ \Vert \cdot\Vert_q$, \cite{LED05}).
 .
\end{theorem}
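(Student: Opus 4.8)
The plan is to recognize each item as an instance of a classical concentration statement and to run it through one of two standard engines, both developed in \cite{LED05,BLM13}: L\'evy's spherical isoperimetric inequality, and the logarithmic Sobolev inequality together with Herbst's argument. Beyond invoking these, the only bookkeeping is to check that the various $\sqrt p$ rescalings keep the observable diameter of order $O(1)$, i.e.\ that the relevant Lipschitz-concentration constants stay bounded in $p$. For the \emph{sphere}: L\'evy's inequality on $\mathbb S^{p-1}$ gives $\mathbb P(|g(Y)-m_g|\ge t)\le 2e^{-(p-1)t^2/2}$ for all $1$-Lipschitz $g$; writing $Z=\sqrt p\,Y$ with $Y$ uniform on $\mathbb S^{p-1}$, a $1$-Lipschitz $f$ on $\sqrt p\,\mathbb S^{p-1}$ is $\sqrt p$-Lipschitz after composition with the dilation, so $\mathbb P(|f(Z)-m_f|\ge t)\le 2e^{-(p-1)t^2/(2p)}\le 2e^{-t^2/4}$ for $p\ge2$, which by the median characterization in Definition~\ref{def:concentrated_sequence} is $Z\propto\mathcal E_2$.

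For the \emph{log-concave densities} (the Gaussian included): if $d\mathbb P_Z=e^{-U}\,d\lambda_p$ with $\nabla^2U\ge cI_p$, I would invoke the Bakry--\'Emery criterion to obtain a logarithmic Sobolev inequality with constant $1/c$, then Herbst's argument to get $\mathbb P(|f(Z)-\mathbb E f(Z)|\ge t)\le 2e^{-ct^2/2}$ for $1$-Lipschitz $f$; since $c\ge O(1)$ this is $Z\propto\mathcal E_2$. The standard Gaussian is the case $U(z)=\tfrac12\|z\|^2+\tfrac p2\log2\pi$, $c=1$ (recovering the constants $C=2$, $c=\sqrt2$ of \eqref{eq:first_concentration_inequality}), and also arises as the limiting law of a fixed number of coordinates of the uniform law on $\sqrt p\,\mathbb S^{p-1}$. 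For the \emph{ball}, I would use the ``hat-box'' identity that the coordinate projection $\pi\colon\mathbb R^{p+2}\to\mathbb R^p$ pushes the uniform law on the unit sphere $\mathbb S^{p+1}$ forward to the uniform law on the unit ball $\mathcal B\subset\mathbb R^p$; after rescaling, the uniform law on $\sqrt p\,\mathcal B$ is the $\pi$-image of the uniform law on the sphere of radius $\sqrt p$ in $\mathbb R^{p+2}$, which is $\mathcal E_2$ with observable diameter $O(1)$ exactly as in the sphere case (the larger ambient dimension only improves the exponent), and Proposition~\ref{pro:stabilite_lipschitz_mappings} concludes since $\pi$ is $1$-Lipschitz.

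For the \emph{hypercube} and the $\mathcal E_1$ and $\mathcal E_q$ examples: the uniform law on a bounded interval is log-concave and satisfies a logarithmic Sobolev inequality, which tensorizes with a dimension-free constant, so the product law on the cube satisfies one with the same constant and Herbst's argument gives a bound $2e^{-(t/c\sigma)^2}$ for $1$-Lipschitz $f$, with $\sigma$ determined by the one-dimensional constant. The same tensorization applied to the Poincar\'e and modified logarithmic Sobolev inequalities of the one-dimensional Laplace law $\tfrac12e^{-|x|}$ (Talagrand, Bobkov--Ledoux; see \cite{TAL95}) yields the linear exponent $Z\propto\mathcal E_1$; and for $Z$ uniform on the unit $\ell^q$-ball one combines the Schechtman--Zinn $q$-exponential estimate (see \cite{LED05}) with an estimate of the typical Euclidean norm of such $Z$ to identify the observable diameter $O(p^{-1/q})$.

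The step I expect to be the real subtlety is the ball (and, a little less so, the cube): both are compactly supported log-concave measures, for which the generic ``bounded support $+$ log-concavity $\Rightarrow$ logarithmic Sobolev inequality'' bound has constant of order the squared diameter, i.e.\ $\asymp p$, and so would only deliver an observable diameter $O(\sqrt p)$. Getting the dimension-free $O(1)$ genuinely requires the extra structure---the exact sphere identification for the ball, the product structure for the cube---rather than a black-box isoperimetric input.
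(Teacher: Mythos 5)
Your overall strategy is exactly the one the paper implicitly relies on: the theorem is stated without proof and simply points to \cite{LED05}, and your assembly of L\'evy's isoperimetric inequality for the sphere, the Bakry--\'Emery/Herbst route for strictly log-concave densities (the Gaussian included), the projection of the uniform law on $\mathbb S^{p+1}$ onto the first $p$ coordinates for the ball, and tensorization for the product examples is the standard way all of these are established there. The sphere, Gaussian, general log-concave, ball, Laplace ($\mathcal E_1$) and $\ell^q$-ball items are handled correctly, with the right bookkeeping of the $\sqrt p$ rescalings, and your observation that the ball cannot be obtained from a black-box ``bounded support plus log-concavity'' bound is well taken.

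The gap is the hypercube. You assert that the one-dimensional logarithmic Sobolev inequality for the uniform law on the interval ``tensorizes with a dimension-free constant, \ldots with $\sigma$ determined by the one-dimensional constant''---but for the interval $[0,\sqrt p]$ that one-dimensional constant is itself of order $p$ (the LSI constant scales as the squared length of the interval), so your argument only delivers $Z\propto\mathcal E_2(\sqrt p)$, not $\mathcal E_2(1)$. Your closing paragraph correctly identifies the squared-diameter obstruction, but the ``product structure'' does not rescue the cube at this normalization: the obstruction already lives in each one-dimensional factor, and tensorization cannot improve on it. In fact no argument can close this item as stated: the coordinate map $z\mapsto z_1$ is $1$-Lipschitz and $Z_1$ is uniform on $[0,\sqrt p]$, with standard deviation $\sqrt{p/12}\to\infty$, so $Z\not\propto\mathcal E_2$ in the sense of an $O(1)$ observable diameter. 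The intended example is presumably $[0,1]^p$, whose Euclidean metric diameter is $\sqrt p$, in line with the sphere and ball items and with the discussion in Remark~\ref{rem:observable_diameter_gaussienne}; for that cube your tensorization argument closes exactly as written, with a genuinely dimension-free one-dimensional constant. You should either flag the normalization issue explicitly or prove the statement for $[0,1]^p$ and note that the $[0,\sqrt p]^p$ version holds only with observable diameter $O(\sqrt p)$.
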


A very explicit characterization of \textit{exponential concentration} is given by a bound on the different centered moments that lets appear as expected a dependence on the observable diameter.
\begin{proposition}[Characterization with the centered moments]\label{pro:characterization_moments}
  \cite[Proposition 1.10]{LED05} A random vector $Z \in E$ is $q$-exponentially concentrated with an observable diameter of order $\sigma$ (i.e., $Z \propto \mathcal E_q(\sigma)$) if and only if there exist two constants $C,c >0$ such that for all $p \in \mathbb N$, any (sequence of) $1$-Lipschitz functions $f : E_p \to \mathbb R$:
  \begin{align}\label{eq:characterization_concentration_avec_moments}
    \forall r >0 : \mathbb{E}\left[\left\vert f(Z_p) - f(Z'_p)\right\vert^r\right] \leq C \left(\frac{r}{q}\right)^{\frac{r}{q}}(c\sigma_p)^r,
   \end{align} 
   where $Z'_p$ is an independent copy of $Z_p$.
   Inequality~\eqref{eq:characterization_concentration_avec_moments} also holds if we replace $f(Z'_p)$ with $\mathbb E[f(Z_p)]$ (of course the constants $C$ and $c$ might be slightly different).
\end{proposition}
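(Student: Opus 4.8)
The plan is to show equivalence between the tail bound in the first item of Definition~\ref{def:concentrated_sequence} (with the independent copy) and the moment bound \eqref{eq:characterization_concentration_avec_moments}, and then transfer the statement to the expectation-centered version using the already-established equivalence of the three items of Definition~\ref{def:concentrated_sequence}. Fix a $1$-Lipschitz $f$ and write $Y_p = |f(Z_p) - f(Z_p')|$; both implications reduce to a standard relation between the survival function and the moments of the nonnegative random variable $Y_p$, so the whole argument is essentially dimension-free bookkeeping once $f$ is fixed.

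\textbf{From tails to moments.} Assuming $\mathbb P(Y_p \geq t) \leq C e^{-(t/c\sigma_p)^q}$, I would use the layer-cake formula
\begin{align*}
  \mathbb E[Y_p^r] = \int_0^\infty r t^{r-1}\, \mathbb P(Y_p \geq t)\, dt \leq C r \int_0^\infty t^{r-1} e^{-(t/c\sigma_p)^q}\, dt.
\end{align*}
The substitution $u = (t/c\sigma_p)^q$ turns the integral into $(c\sigma_p)^r \frac{r}{q}\int_0^\infty u^{r/q - 1} e^{-u}\, du = (c\sigma_p)^r \frac{r}{q}\,\Gamma(r/q)$. Then I would bound $\frac{r}{q}\Gamma(r/q) = \Gamma(r/q + 1) \leq (r/q)^{r/q}$ for the relevant range (Stirling, or the elementary bound $\Gamma(x+1)\le x^x$ for $x\ge 1$, with a harmless constant adjustment for small $x$), which yields \eqref{eq:characterization_concentration_avec_moments} after absorbing a universal factor into $C$.

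\textbf{From moments to tails.} Conversely, assuming \eqref{eq:characterization_concentration_avec_moments}, I would apply Markov's inequality at the moment of order $r$: for any $r>0$,
\begin{align*}
  \mathbb P(Y_p \geq t) \leq \frac{\mathbb E[Y_p^r]}{t^r} \leq C \left(\frac{r}{q}\right)^{r/q} \left(\frac{c\sigma_p}{t}\right)^r,
\end{align*}
and then optimize over $r$. The natural choice $r = q (t/(ec\sigma_p))^q$ (so that $(r/q)^{1/q} \cdot c\sigma_p / t = 1/e^{1/q}$ up to the scaling) gives a bound of the form $C e^{-(t/c'\sigma_p)^q}$ for a suitably rescaled constant $c'$, valid once $t$ is large enough that $r\ge 1$; for small $t$ the bound is trivial by taking $C\ge 1$. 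This recovers the first item of Definition~\ref{def:concentrated_sequence}, hence $Z\propto\mathcal E_q(\sigma_p)$.

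\textbf{The expectation-centered variant and the main obstacle.} The final sentence of the statement follows by the same two computations applied to $\tilde Y_p = |f(Z_p) - \mathbb E[f(Z_p)]|$, using the third item of Definition~\ref{def:concentrated_sequence} in place of the first; the equivalence of the items (cited from \cite{LED05}, \cite{louart2018concentration}) guarantees we may pass freely between $f(Z_p')$ and $\mathbb E[f(Z_p)]$ at the cost of changing $C,c$. The only genuinely delicate point is tracking the constants cleanly through the optimization in the moments-to-tails direction so that they remain universal (independent of $p$, $f$, and $r$), and handling the small-$t$ / small-$r$ regimes where the asymptotic bound on $\Gamma$ or the constraint $r\ge 1$ would otherwise fail — both are dispatched by enlarging $C$. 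I expect no substantive difficulty beyond this constant-chasing, since the argument is the classical Orlicz-norm/$\psi_q$ equivalence specialized to the fixed observation $f(Z_p)$.
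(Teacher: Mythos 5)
Your proposal is correct and follows essentially the same route the paper takes for its multi-regime generalization (Proposition~\ref{pro:characterization_exponential_concentration_multiple_regime} in Appendix~\ref{app:characterization_moment_multi_regime}, whose $m=1$ case is exactly this statement): layer-cake integration plus the bound $\frac{r}{q}\Gamma(r/q)\leq (r/q)^{r/q}$ for one direction, and Markov's inequality optimized at $r \asymp \frac{q}{e}(t/c\sigma)^q$ for the converse. The only point worth making explicit is that your ``harmless constant adjustment'' for $r<q$ is handled in the paper by Jensen's inequality, $\mathbb{E}[Y^r]\leq \mathbb{E}[Y^q]^{r/q}$, which is the clean way to dispatch that regime.
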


\medskip

The Lipschitz-concentrated vectors described in Definition~\ref{def:concentrated_sequence} belong to the larger class of linearly concentrated random vectors that only requires the linear observations to concentrate. This ``linear concentration'' presents less stability properties than those described by Proposition~\ref{pro:stabilite_lipschitz_mappings} but is still a relevant notion because:
\begin{enumerate}
    \item although it must be clear that a concentrated vector $Z$ is generally far from its expectation (for instance Gaussian vectors lie on an ellipse), it can still be useful to have some control on $\Vert Z -\mathbb E[Z] \Vert$ to express the concentration of product of vectors; linear concentration is a sufficient assumption for this control,
    \item there are some examples (Proposition~\ref{pro:concentration_lineaire_YAX} and~\ref{pro:concentration_lineaire_XDY}) where we can only derive linear concentration inequalities from a Lipschitz concentration hypothesis. In that case, we say that the Lipschitz concentration ``degenerates'' into linear concentration that appears as a ``residual'' concentration property. 
\end{enumerate}

\section{Linear concentration and control on high order statistics}\label{sse:linear_concentration_norm}



\begin{definition}[Linearly concentrated vectors]\label{def:linear_concentration}
  Given a sequence of normed vector spaces $(E_p, \Vert \cdot \Vert_p)_{p\geq 0}$, a sequence of random vectors $(Z_p)_{p\geq 0} \in \prod_{p\geq 0} E_p$, a sequence of deterministic vectors $(\tilde Z_p)_{p\geq 0} \in \prod_{p\geq 0} E_p$, a sequence of positive reals $(\sigma_p)_{p\geq 0} \in \mathbb R_+ ^{\mathbb N}$ and a parameter $q>0$, $Z_p$ is said to be \emph{$q$-exponentially linearly concentrated} around the \emph{deterministic equivalent} 
  $\tilde Z_p$ with an \emph{observable diameter} of order $O(\sigma_p)$ iff there exist two constants $c,C >0$ such that $\forall p \in \mathbb N$ and for any unit-normed linear form $f \in E_p'$ ($\forall p \in \mathbb N$, $\forall x \in E$: $|f(x)| \leq \|x\|$):
  \begin{align*}
    \forall t>0: \ 
      \mathbb P \left(\left\vert f(Z_p ) - f(\tilde Z_p)\right\vert\geq t\right) \leq C e^{(t/c\sigma_p)^q}.
  \end{align*}
  When the property holds, we write $Z \in \tilde Z \pm \mathcal E_q(\sigma)$. If it is unnecessary to mention the deterministic equivalent, we will simply write $Z \in \mathcal E_q(\sigma)$.
  If we just need to control its amplitude, we can write $Z \in O(\theta)\pm \mathcal E_q(\sigma)$ when $\|\tilde Z_p \| \leq O(\theta_p)$.
\end{definition}
When $q=2$, we retrieve the well known class of sub-Gaussian random vectors. We need this definition with generic $q$ to prove Proposition~\ref{pro:concentration_lineaire_YAX} which involves a weaker than $\mathcal E_2$ tail decay.

Of course linear concentration is stable through affine transformations. 
\begin{proposition}[Stability through affine mappings]\label{pro:stabilite_concentration_lineaire_affine}
  \sloppypar{Given two (sequences of) normed vector spaces $(E,\|\cdot\|_E)$ and $(F,\|\cdot\|_F)$, a (sequence of) random vectors $Z \in E$, a (sequence of) deterministic vectors $\tilde Z \in E$ and a (sequence of) affine mappings $\phi : E \to F$ such that $\forall x \in E: \|\phi(x) - \phi(0)\|_F\leq \lambda \|x \|_E$:
  \begin{align*}
    Z \in \tilde Z \pm \mathcal E_q(\sigma)&
    &\Longrightarrow&
    &\phi(Z) \in \phi(\tilde Z) \pm \mathcal E_q(\lambda\sigma).
  \end{align*}}
\end{proposition}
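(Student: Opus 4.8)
The plan is to unwind the definition of linear concentration and push the hypothesis through the dual of the affine map $\phi$. First I would reduce to the case $\phi(0)=0$: setting $\psi(x) = \phi(x) - \phi(0)$, we have $\psi$ linear with $\|\psi(x)\|_F \leq \lambda\|x\|_E$, and a linear form $f \in F'$ applied to $\phi(Z)$ differs from $f$ applied to $\psi(Z)$ by the constant $f(\phi(0))$, which is absorbed identically on both the random and the deterministic side; likewise $\phi(\tilde Z) = \psi(\tilde Z) + \phi(0)$. So it suffices to prove $\psi(Z) \in \psi(\tilde Z) \pm \mathcal E_q(\lambda\sigma)$.

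Next I would take an arbitrary unit-normed linear form $f \in F'$ (i.e. $|f(y)| \leq \|y\|_F$ for all $y$) and consider $g := f \circ \psi \in E'$. The bound $\|\psi(x)\|_F \leq \lambda\|x\|_E$ gives $|g(x)| = |f(\psi(x))| \leq \|\psi(x)\|_F \leq \lambda\|x\|_E$, so $g/\lambda$ is a unit-normed linear form on $E$ (if $\lambda = 0$ the statement is trivial since then $\psi \equiv 0$). Applying the hypothesis $Z \in \tilde Z \pm \mathcal E_q(\sigma)$ to the unit-normed form $g/\lambda$ yields constants $C,c>0$, independent of $p$, with
\begin{align*}
  \mathbb P\left(\left\vert \tfrac{1}{\lambda} g(Z_p) - \tfrac{1}{\lambda} g(\tilde Z_p)\right\vert \geq s\right) \leq C e^{-(s/c\sigma_p)^q}
\end{align*}
for all $s>0$. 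Substituting $s = t/\lambda$ and multiplying through by $\lambda$ inside the absolute value gives
\begin{align*}
  \mathbb P\left(\left\vert f(\psi(Z_p)) - f(\psi(\tilde Z_p))\right\vert \geq t\right) \leq C e^{-(t/c\lambda\sigma_p)^q},
\end{align*}
which is exactly the defining inequality for $\psi(Z) \in \psi(\tilde Z) \pm \mathcal E_q(\lambda\sigma)$, with the same constants $C,c$. Combined with the reduction of the first step, this establishes $\phi(Z) \in \phi(\tilde Z) \pm \mathcal E_q(\lambda\sigma)$.

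There is essentially no obstacle here: the proof is a one-line change of variables in the dual, and the only points requiring a word of care are the trivial case $\lambda = 0$ and checking that the additive constant $f(\phi(0))$ genuinely cancels (it does, because it appears with the same sign in $f(\phi(Z_p))$ and in $f(\phi(\tilde Z_p))$, so it drops out of the difference). One might also remark that the measurability of $\phi(Z)$ as a random vector of $F$ is immediate since $\phi$ is continuous (being affine with bounded linear part). This is the expected ``stability'' counterpart of Proposition~\ref{pro:stabilite_lipschitz_mappings} for the weaker linear-concentration notion, and its simplicity is precisely why linear concentration is only stable under \emph{affine} — not general Lipschitz — maps.
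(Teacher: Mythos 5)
Your proof is correct, and it is exactly the routine dual-composition argument the paper has in mind when it states this proposition without proof (``Of course linear concentration is stable through affine transformations''). The reduction to the linear part, the observation that $f\circ\psi/\lambda$ is a unit-normed form on $E$, and the rescaling $s=t/\lambda$ are all as expected; nothing further is needed.
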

When the expectation can be defined, there exists an implication link between Lipschitz concentration (Definitions~\ref{def:concentrated_sequence}) and linear concentration (Definition~ \ref{def:linear_concentration}).
\begin{lemma}\label{lem:conc_lip_imply_conc_linear}
  Given a normed space $(E,\|\cdot \|)$ and a random vector $Z \in E$ admitting an expectation, we have the implication:
  \begin{align*}
    Z \propto \mathcal E_q(\sigma)&
    &\Longrightarrow&
    &Z \in \mathbb E[Z] \pm \mathcal E_q(\sigma).
  \end{align*}
\end{lemma}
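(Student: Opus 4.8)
The plan is to reduce the linear-concentration statement to the Lipschitz-concentration hypothesis by observing that every unit-normed linear form is, in particular, a $1$-Lipschitz function on $(E,\|\cdot\|)$. Concretely, fix a unit-normed $f \in E_p'$, i.e.\ $|f(x)| \leq \|x\|$ for all $x$; then for any $x,y \in E_p$ we have $|f(x)-f(y)| = |f(x-y)| \leq \|x-y\|$, so $f$ is $1$-Lipschitz. Hence the hypothesis $Z_p \propto \mathcal E_q(\sigma_p)$ applies to $f$ directly, and in the form of Definition~\ref{def:concentrated_sequence} using the expectation (which exists by assumption, and whose existence for $f(Z_p)$ is guaranteed by Remark~\ref{rem:esperence_definie_fonctionnelle} once we know $Z$ admits an expectation, or simply because $|f(Z_p)| \leq \|Z_p\|$ is integrable). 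This gives constants $C,c>0$, independent of $p$, such that
\begin{align*}
  \forall t>0:\quad \mathbb P\left(\left\vert f(Z_p) - \mathbb E[f(Z_p)]\right\vert \geq t\right) \leq C e^{-(t/c\sigma_p)^q}.
\end{align*}

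Next I would identify the deterministic equivalent. Since $f$ is linear (in particular continuous, as $|f(x)| \leq \|x\|$) and $Z_p$ admits an expectation $\mathbb E[Z_p] \in E_p$, we have $\mathbb E[f(Z_p)] = f(\mathbb E[Z_p])$. Substituting this identity into the inequality above yields exactly
\begin{align*}
  \forall t>0:\quad \mathbb P\left(\left\vert f(Z_p) - f(\mathbb E[Z_p])\right\vert \geq t\right) \leq C e^{-(t/c\sigma_p)^q},
\end{align*}
which is the defining property of $Z_p \in \mathbb E[Z_p] \pm \mathcal E_q(\sigma_p)$ in Definition~\ref{def:linear_concentration} (note the constants $C,c$ do not depend on the choice of unit-normed $f$, since they come from the Lipschitz-concentration hypothesis which is uniform over all $1$-Lipschitz functions). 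This completes the argument.

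The only genuinely delicate point — and it is minor — is the interchange $\mathbb E[f(Z_p)] = f(\mathbb E[Z_p])$ in a general (possibly infinite-dimensional or merely normed, non-reflexive) vector space $E_p$. If $\mathbb E[Z_p]$ is understood as a Bochner/Pettis integral, then pulling a continuous linear functional through the expectation is exactly the defining property of the Pettis integral, so nothing is needed beyond the hypothesis that the expectation exists; in the finite-dimensional setting relevant to the paper this is immediate coordinate-wise. One should also note that the semi-norm case (Remark~\ref{rem:definiiton_concentration_avec_metrique}) goes through verbatim: if $\|\cdot\|_p$ is only a semi-norm, a functional with $|f(x)|\leq\|x\|_p$ is still $1$-Lipschitz for the associated pseudometric, and Definition~\ref{def:concentrated_sequence} was already noted to be consistent in that case. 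So the proof is essentially a one-line observation (``unit linear forms are $1$-Lipschitz'') plus the bookkeeping of which of the three equivalent characterizations in Definition~\ref{def:concentrated_sequence} to invoke; I would simply cite the expectation version.
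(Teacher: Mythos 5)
Your proof is correct and is precisely the argument the paper leaves implicit (the lemma is stated without proof there): a unit-normed linear form is $1$-Lipschitz, so the expectation characterization of Definition~\ref{def:concentrated_sequence} applies, and $\mathbb E[f(Z_p)] = f(\mathbb E[Z_p])$ converts the center into the deterministic equivalent $\mathbb E[Z_p]$. Your remarks on the Pettis-integral justification of this interchange and on the semi-norm case are careful and accurate, but add nothing beyond the standard one-line observation.
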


This implication becomes an equivalence in law dimensional spaces (i.e. when the sequence index ``$p$'' is not linked to the dimension of the vector spaces $E_p$); then the distinction between linear concentration and Lipschitz concentration is not relevant anymore.

The next lemma is a formal expression of the assessment that ``any deterministic vector located at a distance smaller than the observable diameter to a deterministic equivalent is also a deterministic equivalent'', we omit the proof that straightforward.
\begin{lemma}\label{lem:diametre_observable_pivot}
  Given a random vector $Z \in E$, a deterministic vector $\tilde Z \in E$ such that $Z \in \tilde Z \pm \mathcal E_q(\sigma)$, we have the equivalence:
  \begin{align*}
    Z \in \tilde Z' \pm \mathcal E_q(\sigma)&
    &\Longleftrightarrow&
    & \left\Vert \tilde Z - \tilde Z'\right\Vert \leq O(\sigma)
  \end{align*}
\end{lemma}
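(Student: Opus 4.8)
The plan is to prove both implications of the equivalence by working directly with the defining inequality of linear concentration, namely applying the hypothesis $Z \in \tilde Z \pm \mathcal E_q(\sigma)$ to well-chosen unit-normed linear forms. The key observation is that the quantity $\|\tilde Z - \tilde Z'\|$ can be probed by a single linear form: by the Hahn--Banach theorem there exists a unit-normed $f \in E_p'$ with $f(\tilde Z_p - \tilde Z_p') = \|\tilde Z_p - \tilde Z_p'\|$. This reduces the problem to a one-dimensional triangle-inequality argument on the real line.

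\medskip

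\textbf{Proof of ``$\Longleftarrow$''.} Assume $\|\tilde Z_p - \tilde Z_p'\| \leq O(\sigma_p)$, say $\|\tilde Z_p - \tilde Z_p'\| \leq K\sigma_p$ for all $p$, where $K \leq O(1)$. Let $f \in E_p'$ be any unit-normed linear form. Then for all $t > 0$, using linearity of $f$ and $|f(\tilde Z_p - \tilde Z_p')| \leq \|\tilde Z_p - \tilde Z_p'\| \leq K\sigma_p$, the event $\{|f(Z_p) - f(\tilde Z_p')| \geq t\}$ is contained in $\{|f(Z_p) - f(\tilde Z_p)| \geq t - K\sigma_p\}$. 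For $t \geq 2K\sigma_p$ we get $t - K\sigma_p \geq t/2$, so by the hypothesis $Z \in \tilde Z \pm \mathcal E_q(\sigma)$,
\begin{align*}
  \mathbb P\left(\left\vert f(Z_p) - f(\tilde Z_p')\right\vert \geq t\right) \leq \mathbb P\left(\left\vert f(Z_p) - f(\tilde Z_p)\right\vert \geq t/2\right) \leq C e^{-(t/2c\sigma_p)^q},
\end{align*}
while for $t < 2K\sigma_p$ the probability is trivially at most $1 \leq C' e^{-(t/c'\sigma_p)^q}$ for a suitable choice of constants $C', c' \leq O(1)$ absorbing the exponential at $t = 2K\sigma_p$. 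Combining the two regimes and enlarging the constants gives $Z \in \tilde Z' \pm \mathcal E_q(\sigma)$.

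\medskip

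\textbf{Proof of ``$\Longrightarrow$''.} Assume both $Z \in \tilde Z \pm \mathcal E_q(\sigma)$ and $Z \in \tilde Z' \pm \mathcal E_q(\sigma)$; we must show $\|\tilde Z_p - \tilde Z_p'\| \leq O(\sigma_p)$. Fix $p$ and pick, by Hahn--Banach, a unit-normed $f \in E_p'$ achieving $f(\tilde Z_p - \tilde Z_p') = \|\tilde Z_p - \tilde Z_p'\|$. By Remark~\ref{rem:esperence_definie_fonctionnelle} (applied to the real random variable $f(Z_p)$, which is $1$-Lipschitz in $Z_p$), $f(Z_p)$ is integrable, and from $Z \in \tilde Z \pm \mathcal E_q(\sigma)$ one gets $|\mathbb E[f(Z_p)] - f(\tilde Z_p)| \leq \int_0^\infty \mathbb P(|f(Z_p) - f(\tilde Z_p)| \geq t)\,dt \leq \int_0^\infty C e^{-(t/c\sigma_p)^q}\,dt = C c \sigma_p\, \Gamma(1 + 1/q) \leq O(\sigma_p)$; similarly $|\mathbb E[f(Z_p)] - f(\tilde Z_p')| \leq O(\sigma_p)$. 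The triangle inequality then yields $\|\tilde Z_p - \tilde Z_p'\| = |f(\tilde Z_p) - f(\tilde Z_p')| \leq O(\sigma_p)$, as desired.

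\medskip

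The main obstacle, such as it is, is bookkeeping the constants: one must make sure the $O(\cdot)$ absorptions in the ``$\Longleftarrow$'' direction (handling small $t$) and the Gamma-function integral in the ``$\Longrightarrow$'' direction are uniform in $p$, which they are since $q$ is fixed and $C, c, K \leq O(1)$. The only genuinely non-trivial ingredient is the use of Hahn--Banach to realize $\|\tilde Z_p - \tilde Z_p'\|$ as the value of a unit-normed linear form — this is what lets a statement about the norm of a difference of deterministic equivalents be tested by the linear-concentration hypothesis, which only controls linear observations.
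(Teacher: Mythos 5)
Your proof is correct. The paper actually states Lemma~\ref{lem:diametre_observable_pivot} without providing a proof, so there is nothing to compare against; your argument — a triangle inequality with a two-regime absorption of constants for ``$\Longleftarrow$'', and Hahn--Banach plus the integrated tail bound $\mathbb E[|f(Z_p)-f(\tilde Z_p)|]\leq Cc\sigma_p\Gamma(1+1/q)\leq O(\sigma_p)$ for ``$\Longrightarrow$'' — is the natural one and all the uniformity-in-$p$ bookkeeping is handled properly. (A minor remark: the forward direction can also be done without invoking expectations, by noting that if $\|\tilde Z_p-\tilde Z_p'\|=2t$ the events $\{|f(Z_p)-f(\tilde Z_p)|<t\}$ and $\{|f(Z_p)-f(\tilde Z_p')|<t\}$ are disjoint, forcing $1\leq 2Ce^{-(t/c\sigma_p)^q}$ and hence $t\leq O(\sigma_p)$; but your route is equally valid.)
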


\begin{definition}[Centered moments of random vectors]\label{def:moment_vecteur_aleatoire}
  Given a random vector $X \in \mathbb R^p$ and an integer $r \in \mathbb N$, we call the ``$r^{\textit{th}}$ centered moment of $X$'' the symmetric $r$-linear form
$C_r^X : (\mathbb R^p)^r \to \mathbb R$ defined for any $u_1,\ldots,u_r \in \mathbb R^p$ by
\begin{align*}
  C_r^X(u_1,\ldots,u_p) = \mathbb E \left[\prod_{i=1}^p\left(u_i^TX - \mathbb E[u_i^TX]\right)\right].
\end{align*}
When $r=2$, the centered moment is the covariance matrix.
\end{definition}
We define the operator norm of an $r$-linear form $S$ of $\mathbb R^p$ as
\begin{align*}
  \|S\| \equiv \sup_{\|u_1\| ,\ldots,\|u_r\|\leq 1}S(u_1,\ldots,u_p).
\end{align*}
When $S$ is symmetric, we employ the simpler formula $\|S\| = \sup_{\|u\|\leq 1}S(u,\ldots,u)$.
We then have the following characterization, similar to Proposition~\ref{pro:characterization_moments} (refer to \cite[Proposition~1.21, Lemma~1.21]{louart2018concentration} for the technical arguments required to go from a bound on $r \in\mathbb N$ to a bound on $r >0$).
\begin{proposition}[Moment characterization of linear concentration]\label{pro:carcaterisation_vecteur_linearirement_concentre_avec_moments}
  Given $q>0$, a sequence of random vectors $X_p \in \mathbb R^p$, and a sequence of positive numbers $\sigma_p >0$, we have the following equivalence:
  \begin{align*}
    X \in \mathcal E_q(\sigma)&
    &\Longleftrightarrow&
    &\exists C,c>0, \forall p \in \mathbb N, \forall r \geq q : \|C^{X_p}_r\| \leq C \left(\frac{r}{q}\right)^{\frac{r}{q}}(c\sigma_p)^r
  \end{align*}
\end{proposition}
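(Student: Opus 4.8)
The plan is to exploit the equivalence between linear concentration and a bound on the centered moments of \emph{linear} observations, and then to translate such a bound into a bound on the operator norms of the multilinear forms $C_r^{X_p}$. Concretely, unfolding Definition~\ref{def:linear_concentration}, the hypothesis $X \in \mathcal E_q(\sigma)$ says precisely that for every unit-normed linear form $u$ the scalar random variable $u^T X_p$ satisfies the tail bound of the right shape; by Proposition~\ref{pro:characterization_moments} (applied to the $1$-Lipschitz observation $x \mapsto u^T x$), this is equivalent to the moment estimate $\mathbb E[|u^T X_p - \mathbb E[u^T X_p]|^r] \leq C (r/q)^{r/q}(c\sigma_p)^r$, uniformly in unit $u$. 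So the real content of the proposition is the passage between ``diagonal'' moments $C_r^{X_p}(u,\ldots,u) = \mathbb E[(u^T X_p - \mathbb E u^T X_p)^r]$ for all $u$ and the operator norm $\|C_r^{X_p}\| = \sup_{\|u\|\leq 1} C_r^{X_p}(u,\ldots,u)$ (the last formula being available since $C_r^{X_p}$ is symmetric, as noted just before the statement).

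For the direction ``$\Leftarrow$'', this is essentially immediate: given the moment bound on $\|C^{X_p}_r\|$ for all $r\geq q$, one has for every unit $u$ that $\mathbb E[(u^TX_p - \mathbb E u^TX_p)^r] = C^{X_p}_r(u,\ldots,u) \leq \|C^{X_p}_r\|\leq C(r/q)^{r/q}(c\sigma_p)^r$ for even $r$, and the odd moments are controlled by Cauchy–Schwarz (or by the interpolation argument cited from \cite[Proposition~1.21, Lemma~1.21]{louart2018concentration} to go from integer $r$ to real $r$ and to absolute values). Then Proposition~\ref{pro:characterization_moments} applied to $f = u^T\cdot$ gives the tail bound for $u^T X_p$, which is exactly linear concentration.

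For the direction ``$\Rightarrow$'', I would first invoke Proposition~\ref{pro:characterization_moments} in the other direction to get, from $X\in\mathcal E_q(\sigma)$, the uniform bound $\mathbb E[|u^TX_p - \mathbb E u^TX_p|^r]\leq C(r/q)^{r/q}(c\sigma_p)^r$ over all unit-normed $u$ and all $r\geq q$ (with the variant replacing the independent copy by the expectation, as the proposition allows). Taking $r$ to be an even integer $\geq q$ (and afterwards handling general real $r$ via the same interpolation lemma from \cite{louart2018concentration}), the quantity $C^{X_p}_r(u,\ldots,u)$ is then nonnegative and bounded by $C(r/q)^{r/q}(c\sigma_p)^r$; taking the supremum over $\|u\|\leq 1$ and using $\|C^{X_p}_r\| = \sup_{\|u\|\leq1}C^{X_p}_r(u,\ldots,u)$ yields the claimed operator-norm bound. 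The mild subtlety here is that the formula $\|S\|=\sup_{\|u\|\leq1}S(u,\ldots,u)$ for symmetric $S$ is only stated, not proven, in the excerpt; I would simply cite it (it is the standard polarization fact that a symmetric multilinear form attains its operator norm on the diagonal), and note that it is exactly what makes controlling the diagonal values $C^{X_p}_r(u,\ldots,u)$ enough.

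The main obstacle — really the only non-bookkeeping point — is the interpolation step allowing one to go from the estimate at even integers $r$ to the estimate at all real $r\geq q$, and symmetrically from $(u^TX_p - \mathbb E u^TX_p)^r$ at even $r$ to $|u^TX_p - \mathbb E u^TX_p|^r$ at all $r$. I would not redo this; it is precisely what is packaged in \cite[Proposition~1.21, Lemma~1.21]{louart2018concentration} referenced in the statement, and the sketch would just indicate that one first proves the two-sided implication for $r$ an even integer, where everything is clean and nonnegative, and then extends the range of $r$ by that lemma, absorbing the change into the constants $C,c$. Everything else — unfolding the definitions, Cauchy–Schwarz for odd moments, taking suprema over unit vectors — is routine.
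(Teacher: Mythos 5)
Your proposal is correct and follows essentially the same route the paper intends: the paper gives no written proof of this proposition, merely noting it is "similar to Proposition~\ref{pro:characterization_moments}" and deferring the integer-to-real interpolation in $r$ to \cite[Proposition~1.21, Lemma~1.21]{louart2018concentration}, which is exactly the reduction you describe (scalar moment/tail equivalence for each linear observation $u^TX_p$, plus the fact that a symmetric $r$-linear form attains its operator norm on the diagonal). Your handling of the odd-$r$/absolute-value issue and the appeal to the cited lemma are consistent with what the paper delegates to the reference.
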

In particular, if we note $C = \mathbb E[XX^T] - \mathbb E[X]\mathbb E[X]^T$, the covariance of $X\in \mathcal E_q(\sigma)$, we see that $\|C\| \leq O(\sigma^2)$, if in addition $X \in O(\sigma) \pm \mathcal E_q(\sigma)$ (which means that $\|\mathbb E[X]\| \leq O(\sigma)$), then $\|\mathbb E[XX^T] \| \leq O(\sigma^2)$

\medskip

With these results at hand, we are in position to explain how a control on the norm can be deduced from a linear concentration hypothesis.

\section{Control of the norm of linearly concentrated random vectors}\label{sec:concentration_norm}
Given a random vector $Z \in (E,\|\cdot \|)$, if $Z \in \tilde Z \pm \mathcal E_q(\sigma)$, the control of $\|Z - \tilde Z \|$ can be done easily when the norm $\left\Vert  \cdot \right\Vert$ can be defined as the supremum on a set of linear forms; for instance when $(E,\|\cdot \|) = (\mathbb{R}^p, \left\Vert \cdot \right\Vert_{\infty})$: $\left\Vert x \right\Vert_{\infty} =\sup_{ 1 \leq i \leq p} e_i^T x$ (where $(e_1, \ldots,e_p)$ is the canonical basis of $\mathbb R^p$). We can then bound:
\begin{align*}
    \mathbb{P}\left(\Vert Z - \tilde Z \Vert_{\infty} \geq t\right) 
    &=\mathbb{P}\left(\sup_{1\leq i \leq p} e_i^T(Z - \tilde Z) \geq t\right) \\
    &\leq \min \left(1, p \sup_{1\leq i \leq p}\mathbb{P}\left( e_i^T(Z - \tilde Z) \geq t\right)\right) \\
    &\leq \min \left(1,p C e^{-c(t/\sigma)^q}  \right) \ \ \leq \ \max(C,e) \exp \left(-\frac{ct^q}{2\sigma^q \log(p)}\right),
\end{align*}
for some constants $c, C> 0$ ($C\leq O(1)$, $c\geq O(1)$).

To manage the infinity norm, the supremum is taken on a finite set $\{e_1, \ldots e_p\}$.
Problems arise when this supremum must be taken on an infinite set. For instance, for the Euclidean norm, the supremum is taken over the whole unit ball $\mathcal B_{\mathbb R^p} \equiv \{u \in \mathbb R^p, \Vert u \Vert \leq 1\}$ since for any $x \in \mathbb R^p$, $\left\Vert x \right\Vert = \sup\{ u^T x, \Vert u \Vert \leq 1\}$.
This loss of cardinality control can be overcome if one introduces so-called $\varepsilon$-nets to discretize the ball with a net $\{u_i\}_{i \in I}$ (with $I$ finite -- $|I |<\infty$) in order to simultaneously :
\begin{enumerate}
  \item approach sufficiently the norm to ensure $$\mathbb{P}\left(\Vert Z - \tilde Z \Vert_{\infty} \geq t\right)\approx\mathbb{P}\left(\sup_{i \in I} u_i^T(Z - \tilde Z) \geq t\right),$$
  \item control the cardinality $|I|$ for the inequality $$\mathbb{P}\left(\sup_{i \in I} u_i^T(Z - \tilde Z) \geq t\right)\leq |I |\mathbb{P}\left( u_i^T(Z - \tilde Z) \geq t\right)$$ not to be too loose.
\end{enumerate}
One can then show that there exist two constants $C,c>0$ such that:
\begin{align}\label{eq:concentration_norme_euclidienne}
  \mathbb P(\Vert Z - \tilde Z \Vert\geq t) \leq \ \max(C,e) \exp \left(-\frac{ct^q}{p\sigma^q}\right).
\end{align} 
The approach with $\varepsilon$-nets in $(\mathbb R^p,\|\cdot\|)$ can be generalized to any normed vector space $(E,\|\cdot \|)$ when the norm can be written as a supremum through an identity of the kind :
\begin{align}\label{eq:norm_egal_supremum}
  \forall x \in E : \|x\| = \sup_{\genfrac{}{}{0pt}{2}{f \in H}{\|f\| \leq 1}} f(x), \quad
  \text{with}\  H\subset E' \ \text{and} \ \dim(\vect(H)) < \infty,
 \end{align} for a given $H\subset E'$ (for $E'$, the dual space of $H$) and with $\vect(H)$ the subspace of $E'$ generated by $H$. Such a $H\subset E'$ exists in particular when $(E,\|\cdot\|)$ is a reflexive\footnote{Introducing the mapping $J : E \to E'{}'$ (where $E'{}'$ is the bidual of $E$) satisfying $\forall x \in E$ and $\phi\in E'$: $J(x)(\phi) = \phi(x)$, the normed vector space $E$ is said to be ``reflexive'' if $J$ is onto.} space \cite{JAM57}.
 
When $(E,\|\cdot\|)$ is of infinite dimension, it is possible to establish \eqref{eq:norm_egal_supremum} for some $H \subset E$ when $E$ is reflexive thanks to a result from \cite{JAM57}, or for some choice of semi-norms $\|\cdot\|$. 
Without going into details, we introduce the notion of \textit{norm degree} which will help us adapt the concentration rate $p$ appearing in the exponential term of concentration inequality \eqref{eq:concentration_norme_euclidienne} (concerning $(\mathbb R^p, \| \cdot\|)$) to other normed vector spaces.
\begin{definition}[Norm degree]\label{def:norm_degree}
  Given a normed (or semi-normed) vector space $(E, \Vert \cdot \Vert)$, and a subset $H\subset E'$, the degree $\eta_H$ of $H$ is defined as~:
  \begin{itemize}
      \item $\eta_H \equiv \log(| H|)$ if $H$ is finite,
      \item $\eta_H \equiv \dimm(\vect H)$ if $H$ is infinite.
  \end{itemize}
  If there exists a subset $H \subset E'$ such that \eqref{eq:norm_egal_supremum} is satisfied, we denote $\eta(E, \Vert \cdot \Vert)$, or more simply $\eta_{\Vert \cdot \Vert}$, the degree of $\Vert \cdot \Vert$, defined as~:
  \begin{align*}
      \eta_{\Vert \cdot \Vert}=\eta(E, \Vert \cdot \Vert)\equiv\inf \left\{\eta_H, H\subset E' \ | \ \forall x \in E, \Vert x \Vert = \sup_{f \in H}f(x)\right\}.
  \end{align*}
\end{definition}


\begin{example}\label{exe:norm_degree}
    We can give some examples of norm degrees~:
    \begin{itemize}
        \item $\eta \left( \mathbb R^p, \Vert \cdot \Vert_\infty \right) = \log(p)$ ($H = \{x \mapsto e_i^Tx, 1\leq i\leq p\}$),
        \item $\eta \left( \mathbb R^p, \Vert \cdot \Vert \right) = p$ ($H = \{x \mapsto u^Tx, u \in \mathcal B_{\mathbb R^p}\}$),
        \item $\eta \left( \mathcal M_{p,n}, \Vert \cdot \Vert \right) = n+p$ ($H = \{M \mapsto u^TMv, (u,v)\in \mathcal B_{\mathbb R^p} \times \mathcal B_{\mathbb R^n}\}$),
        \item $\eta \left( \mathcal M_{p,n}, \Vert \cdot \Vert_F \right) = np$ ($H = \{M \mapsto \tr(AM), A \in \mathcal M_{n,p}, \|A\|_F\leq 1\}$),
        \item $\eta \left( \mathcal M_{p,n}, \Vert \cdot \Vert_* \right) = np$ ($H = \{M \mapsto \tr(AM), A \in \mathcal M_{n,p}, \|A\|\leq 1\}$).\footnote{$\|\cdot\|_*$ is the nuclear norm defined for any $M \in \mathcal M_{p,n}$ by $\|M\|_* = \tr(\sqrt{MM^T})$; it is the dual norm of $\| \cdot \|$, which means that for any $A,B \in \mathcal M_{p,n}$, $\tr(AB^T) \leq \|A\| \|B\|_*$. One must be careful that Proposition~\ref{pro:tao_conc_exp} is rarely useful to bound the nuclear norm as explained in footnote~\ref{foo:attention_control_norm_ev_euclidien}.}
    \end{itemize}
    Just to give some justification, if $E=\mathbb R^p$ or $E = \mathcal M_{p,n}$, the dual space $E'$ can be identified with $E$ through the representation with the scalar product. Given a subset $H' \subset E$ such that:
    \begin{align*}
        \forall x \in \mathbb R^p, \|x \|_\infty = \sup_{u\in H'} u^Tx,
    \end{align*}
    we can set that all $u \in H'$ satisfy $\|u\|_1 = \sum_{i=1}^p |u_i| \leq 1$ because if we note $u' = (\text{sign}(u_i))_{i\in [p]}$, we can bound $\|u\|_1 = u^Tu' \leq \sup_{v\in H'} v^Tu' \leq \|u'\|_\infty \leq 1$.
    Then, noting $H = \{e_1,\ldots, e_p\}$, we know that $H \subset H'$, otherwise, if, say $e_i \notin H'$, then one could bound $\|e_i \|_\infty = \sup_{u\in H'} u^Te_i <1$ (because if $\|u\|_1 \leq 1$ and $u \neq e_i$, then $u_i <1$).
    Therefore $H \subset H'$ and it consequently reaches the minimum of $\eta_{H'}$. The value of the other norm indexes is justified with the same arguments.
\end{example}
Depending on the ambient vector space, one can employ one of these examples along with the following proposition borrowed from \cite[Proposition~2.9,2.11, Corollary~2.13]{louart2018concentration} to establish the concentration of the norm of a random vector.
\begin{proposition}\label{pro:tao_conc_exp}
Given a reflexive vector space $(E, \Vert \cdot \Vert)$ and a concentrated vector $Z \in E$ satisfying $Z \in \tilde Z \pm \mathcal E_q(\sigma)$:
  \begin{align*}
       \Vert Z - \tilde Z \Vert \propto \mathcal E_{q}\left(\eta_{\Vert \cdot \Vert}^{1/q}\sigma\right)&
       &\text{and}&
       &\mathbb E \left[\Vert Z - \tilde Z \Vert\right] \leq O \left(\eta_{\Vert \cdot \Vert}^{1/q}\sigma\right).
  \end{align*}
  
\end{proposition}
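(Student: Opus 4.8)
The plan is to reduce the statement to the Euclidean-type estimate \eqref{eq:concentration_norme_euclidienne} via the supremum representation \eqref{eq:norm_egal_supremum} of the norm, handling separately the case where the witnessing set $H\subset E'$ is finite and the case where it spans a finite-dimensional subspace. First I would fix a set $H\subset E'$ achieving (up to $\varepsilon$) the infimum defining $\eta_{\|\cdot\|}$, so that $\|x\|=\sup_{f\in H,\,\|f\|\le 1}f(x)$ for all $x\in E$. Applying this to $x=Z-\tilde Z$ turns $\|Z-\tilde Z\|$ into a supremum of the scalar observations $f(Z-\tilde Z)=f(Z)-f(\tilde Z)$, each of which, by the linear concentration hypothesis $Z\in\tilde Z\pm\mathcal E_q(\sigma)$ and the fact that $f$ is unit-normed, satisfies $\mathbb P(|f(Z)-f(\tilde Z)|\ge t)\le Ce^{-(t/c\sigma)^q}$ with constants independent of $f$ and of $p$.

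The finite case is immediate: if $|H|<\infty$ then a union bound over the at most $|H|$ linear forms, exactly as in the computation just above \eqref{eq:concentration_norme_euclidienne} for $\|\cdot\|_\infty$, yields
\begin{align*}
  \mathbb P(\|Z-\tilde Z\|\ge t)\le |H|\,Ce^{-(t/c\sigma)^q}\le \max(C,e)\exp\!\left(-\frac{ct^q}{2\sigma^q\log|H|}\right),
\end{align*}
so that $\|Z-\tilde Z\|\propto\mathcal E_q(\eta_H^{1/q}\sigma)$ with $\eta_H=\log|H|$. For the infinite case, let $d=\dim(\vect H)=\eta_H$ and work inside the finite-dimensional normed space $(\vect H,\|\cdot\|_{E'})$: one discretizes the unit ball $\{f\in\vect H:\|f\|\le 1\}$ by an $\varepsilon$-net $\{f_i\}_{i\in I}$ of cardinality $|I|\le (3/\varepsilon)^d$ (the standard volumetric bound in dimension $d$), which by the two points itemized before \eqref{eq:concentration_norme_euclidienne} simultaneously approximates $\sup_{f}f(Z-\tilde Z)$ up to a factor $(1-\varepsilon)^{-1}$ and keeps the union bound of size $|I|$ under control; choosing $\varepsilon$ a fixed constant (e.g.\ $1/2$) gives $\mathbb P(\|Z-\tilde Z\|\ge t)\le (3/\varepsilon)^d\,Ce^{-((1-\varepsilon)t/c\sigma)^q}\le\max(C,e)\exp(-c't^q/(d\sigma^q))$, i.e.\ $\|Z-\tilde Z\|\propto\mathcal E_q(d^{1/q}\sigma)=\mathcal E_q(\eta_{\|\cdot\|}^{1/q}\sigma)$. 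Finally, the moment bound $\mathbb E[\|Z-\tilde Z\|]\le O(\eta_{\|\cdot\|}^{1/q}\sigma)$ follows from the concentration of the real random variable $\|Z-\tilde Z\|$ together with $\mathbb E[\|Z-\tilde Z\|]\le \text{(median)}+\mathbb E|\,\|Z-\tilde Z\|-\text{median}\,|$, exactly as in Remark~\ref{rem:esperence_definie_fonctionnelle}, using that a median is itself at most $O(\eta_{\|\cdot\|}^{1/q}\sigma)$ (which one reads off from the tail bound at $t$ of that order), or alternatively by integrating the tail bound directly.

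The main obstacle is the $\varepsilon$-net step in the infinite-dimensional ambient space: one must be careful that even if $E$ itself is infinite-dimensional, the representation \eqref{eq:norm_egal_supremum} guarantees $\vect H$ is finite-dimensional, so the net lives in a genuine finite-dimensional space and the volumetric covering bound $(3/\varepsilon)^d$ applies; one also has to check that the net, built in $\vect H$, still has each $f_i$ of norm $\le 1$ (or rescale), and that the approximation inequality $\sup_{f\in H}f(x)\le (1-\varepsilon)^{-1}\sup_{i\in I}f_i(x)$ holds uniformly in $x=Z-\tilde Z$ — this is where reflexivity (ensuring such an $H$ exists, via \cite{JAM57}) and the definition of norm degree are genuinely used. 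The bookkeeping that all constants $C,c$ stay $O(1)$ uniformly in $p$, already spelled out in the $\|\cdot\|_\infty$ computation and in \eqref{eq:concentration_norme_euclidienne}, carries over verbatim. Since the detailed $\varepsilon$-net argument is carried out in \cite[Proposition~2.9,2.11, Corollary~2.13]{louart2018concentration}, I would only sketch it and refer there for the routine estimates.
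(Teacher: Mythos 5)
Your plan follows exactly the route the paper takes: reduce $\|Z-\tilde Z\|$ to a supremum of unit-normed linear observations via \eqref{eq:norm_egal_supremum}, use a union bound when $H$ is finite and an $\varepsilon$-net of the unit ball of the finite-dimensional space $\vect(H)$ when it is not, then deduce the expectation bound from the tail estimate; the paper itself only sketches this in Section~\ref{sec:concentration_norm} and defers the details to \cite[Propositions~2.9, 2.11, Corollary~2.13]{louart2018concentration}. The approach and the points you flag as delicate (unit-normed net points, uniformity of constants in $p$) are the right ones, so the proposal is correct and essentially identical to the paper's argument.
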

\begin{remark}\label{rem:concentration_norme_hypo_lipschitz}
  In Proposition~\ref{pro:tao_conc_exp}, if $Z \propto \mathcal E_q(\sigma)$, the norm satisfies the same concentration as it is a Lipschitz observation, and one gets\footnote{The notation $Z \in O(\theta) \pm \mathcal E_q(\sigma)$ was presented in Definition~\ref{def:linear_concentration} for linearly concentrated vectors, it can be extended to concentrated random variables.}: 
  $$\Vert Z - \tilde Z \Vert \in O \left(\eta_{\Vert \cdot \Vert}^{1/q}\sigma\right) \pm \mathcal E_{q}\left(\sigma\right).$$
\end{remark}

\begin{example}\label{exe:borne_esp_norm_vecteur_lin_conc}
Given two random vectors $Z \in \mathbb{R}^p$ and $X \in \mathcal M_{p,n}$: 
\begin{itemize}
  \item if $Z\propto \mathcal E_2$ in $(\mathbb{R}^p,\left\Vert \cdot\right\Vert)$, then $\mathbb{E}\left\Vert Z\right\Vert\leq\Vert \mathbb E[Z] \Vert + O(\sqrt{p})$,
  \item if $X \propto \mathcal E_2$ in $(\mathcal M_{p,n}, \left\Vert \, \cdot \,\right\Vert)$, then 
  $\mathbb{E}\left\Vert X\right\Vert \leq \Vert \mathbb E[X]\Vert + O(\sqrt{p+n}),$
  \item if $X \propto \mathcal E_2$ in $(\mathcal M_{p,n}, \left\Vert \, \cdot \,\right\Vert_F)$, then
  $\mathbb{E}\left\Vert X\right\Vert \leq \Vert \mathbb E[X]\Vert_F + O(\sqrt{pn})$.
  \item
   if $X \propto \mathcal E_2(\sqrt{\min(p,n)})$ in $(\mathcal M_{p,n}, \left\Vert \, \cdot \,\right\Vert_*)$, then
  $\mathbb{E}\left\Vert X\right\Vert_* \leq \Vert \mathbb E[X]\Vert_* + O(\sqrt{pn}\sqrt{\min(p,n)})$.\footnote{One must be careful here that Theorem~\ref{the:concentration_vecteur_gaaussien} just provides concentration in the Euclidean spaces $(\mathbb R^p, \|\cdot \|)$ or $(\mathcal M_{p,n}, \|\cdot \|_F)$ from which one can deduce concentration in $(\mathbb R^p, \|\cdot \|_\infty)$ or $(\mathcal M_{p,n}, \|\cdot \|)$ since for all $x \in \mathbb R^p$, $\|x \|_\infty\leq \| x \|$ and for all $M \in \mathcal M_{p,n}$, $\|M\|\leq \|M\|_F$. However one cannot obtain a better bound than $\|M\|_*\leq \sqrt{\min(n,p)}\|M\|_F$: this for instance implies that a random matrix $X = (x_1,\ldots, x_n)$ with $x_1,\ldots, x_n$ i.i.d.\@ satisfying $\forall i\in[n]$, $x_i \sim \mathcal N(0, I_p)$ follows the concentration $X \propto \mathcal E_2(\sqrt{\min(p,n)})$ in $(\mathcal M_{p,n}, \|\cdot \|_*)$.\label{foo:attention_control_norm_ev_euclidien}}
\end{itemize}
\end{example}
Let us consider the semi norm $\|\cdot \|_d$ that will be useful later and that satisfies:
\begin{definition}\label{def:diagonal_norm_d}
  Given $M \in \mathcal M_{n}$, we define the diagonal norm of $M$ as:
  \begin{align*}
    \|M\|_d = \left(\sum_{i=1}^n M_{i,i}^2\right)^{\frac{1}{2}} = \sup_{\genfrac{}{}{0pt}{2}{D \in \mathcal D_n}{\|D\|_F \leq 1} }\tr(DM).
  \end{align*}  
  where $D_{p,n}$ is the set of diagonal matrices of $\mathcal M_{p,n}$ defined as:
  \begin{align*}
    D\in D_{p,n} \Leftrightarrow (i\neq j \implies D_{i,j} = 0).
  \end{align*}
  For simplicity, we note the (non zero) diagonal terms of any $D \in \mathcal D_{n,p}$: $D_1,\ldots, D_{\min(n,p)}$.
\end{definition}
\begin{example}\label{exe:semi_norme_diag}
  We see directly that $\eta_{(\mathcal M_{n}, \|\cdot\|_d)} = \# \mathcal D_n =  n$ and therefore for a given $X \in \mathcal M_{n}$ such that $X \propto \mathcal E_2$, we can bound $\mathbb E\|X\|_d \leq \|\mathbb E [X]\|_d + O(\sqrt n)$.
\end{example}


 Proposition~\ref{pro:tao_conc_exp} is not always the optimal way to bound norms. 
For instance, given a vector $Z\in \mathbb R^p$ and a deterministic matrix $A \in \mathcal M_{p}$, if $Z \propto \mathcal E_q$, one is tempted to bound naively thanks to Proposition~\ref{pro:tao_conc_exp}:
\begin{itemize}
  \item if $\|\mathbb E[Z]\| \leq O( p^{1/q})$, $\mathbb E[\|AZ\|]\leq \|A\|\mathbb E[\|Z\|] \leq O(\|A\|p^{\frac{1}{q}})$; 
  \item if $\|\mathbb E[Z]\| \leq O( 1)$, decomposing $A=P^T\Lambda Q$, where $P,Q \in \mathcal O_p$, $\Lambda = \diag(\lambda)$, $\lambda = (\lambda_1,\ldots, \lambda_p) \in \mathbb R^p$ and setting $\check Z = (\check Z_1,\ldots, \check Z_p) \equiv QZ$:
  \begin{align*}
     \mathbb E[\|AZ\|] = \mathbb E[\|\Lambda QZ \|] = \mathbb E \left[\sqrt{\sum_{i=1}^p \lambda_i^2 \check Z_i^2}\right] \leq \|\lambda\| \mathbb E \left[\|\check Z \|_\infty\right] \leq \|A\|_F O \left((\log p)^{\frac{1}{q}}\right).
   \end{align*} 
   Note indeed that $\check Z \propto \mathcal E_2$ and therefore $\mathbb E[\|\check Z\|_\infty] \leq \|\mathbb E[\check Z]\|_\infty + O \left((\log p)^{\frac{1}{q}}\right)\leq \|\mathbb E[Z]\| + O \left((\log p)^{\frac{1}{q}}\right)$.
\end{itemize}
However, here, Proposition~\ref{pro:tao_conc_exp} is suboptimal: one can reach a better bound thanks to the following lemma that was taken from the proof of \cite[Theorem 2.5]{ADA14}. We give a result for random vectors and random matrices, they are actually equivalent.
\begin{lemma}\label{lem:borne_Ax}
  Given a random vector  in $(\mathbb R^p, \|\cdot \|)$ such that $\|\mathbb E[ZZ^T]\| \leq O(1)$ and a deterministic matrix $A \in \mathcal M_{p}$:
  \begin{align*}
    \mathbb E[\|AZ\|]\leq O \left(\|A\|_F\right).
  \end{align*}
  and given a random matrix $X \in \mathcal E_2$ in $(\mathcal M_{p,n}, \| \cdot \|_F)$ such that $\|\mathbb E[X]\|_F \leq O(1)$ and a supplementary deterministic matrix $B \in \mathcal M_{n}$:
  \begin{align*}
    \mathbb E[\|AXB\|_F]\leq O \left(\|A\|_F\|B\|_F\right).
  \end{align*}
\end{lemma}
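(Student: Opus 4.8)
The idea is to reduce everything to the statement about a random vector in $(\mathbb R^p, \|\cdot\|_\infty)$ by a suitable change of basis (via singular value decomposition of $A$), mimicking the second computation displayed just before the lemma but using the locality provided by Lemma~\ref{lem:concentration_sous_ensemble} and the moment characterization (Proposition~\ref{pro:carcaterisation_vecteur_linearirement_concentre_avec_moments}) to get the \emph{right} power of $\|A\|_F$ rather than the lossy $\|A\|_F (\log p)^{1/q}$. Specifically, write $A = P^T\Lambda Q$ with $P,Q\in\mathcal O_p$ and $\Lambda = \diag(\lambda)$, and set $\check Z = QZ$. Since $Q$ is an isometry for the Euclidean norm, $\check Z$ still concentrates, and $\|AZ\| = \|\Lambda \check Z\| = \big(\sum_i \lambda_i^2 \check Z_i^2\big)^{1/2}$. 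The point is then to bound $\mathbb E\big[(\sum_i \lambda_i^2 \check Z_i^2)^{1/2}\big]$ by $O(\|\lambda\|) = O(\|A\|_F)$, which by Jensen it suffices to do for $\mathbb E\big[\sum_i \lambda_i^2 \check Z_i^2\big] = \sum_i \lambda_i^2\,\mathbb E[\check Z_i^2]$; so one only needs $\mathbb E[\check Z_i^2] \leq O(1)$ uniformly in $i$.

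\textbf{Getting the uniform second-moment bound.} Here is where the concentration hypothesis is used. From $Z \in \mathcal E_q$ in $(\mathbb R^p,\|\cdot\|_\infty)$, applying the implication Lemma~\ref{lem:conc_lip_imply_conc_linear} (or directly, since each coordinate form $e_i^T\cdot$ is $1$-Lipschitz for $\|\cdot\|_\infty$) we get that each $e_i^T Z$ concentrates with observable diameter $O(1)$ around $e_i^T\mathbb E[Z]$; combined with $\|\mathbb E[Z]\|\leq O(1)$ (hence $|e_i^T\mathbb E[Z]|\leq O(1)$) and the moment characterization of exponential concentration (Proposition~\ref{pro:characterization_moments}), this yields $\mathbb E[(e_i^TZ)^2]\leq O(1)$ uniformly in $i$. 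Since $\check Z = QZ$ and each $e_i^T\check Z = (Q^Te_i)^TZ$ is a $1$-Lipschitz \emph{linear} observation of $Z$ for the Euclidean norm, and $Z\in\mathcal E_q$ in $(\mathbb R^p,\|\cdot\|_\infty)$ implies $Z\in\mathcal E_q$ in $(\mathbb R^p,\|\cdot\|)$ (since $\|x\|_\infty\leq\|x\|$ — wait, this goes the wrong way), one must instead work directly: the rotation $Q$ is a norm-$1$ map from $(\mathbb R^p,\|\cdot\|)$ to itself, but we only have $\|\cdot\|_\infty$-concentration, so $\check Z$ need not be nicely concentrated. The fix is to bound $\mathbb E[\check Z_i^2]$ without rotating back: write $\check Z_i^2 = (\sum_j Q_{ij}Z_j)^2 \leq \big(\sum_j |Q_{ij}|\big)\big(\sum_j |Q_{ij}| Z_j^2\big)$ by Cauchy–Schwarz in the form $(\sum a_j b_j)^2 \le (\sum a_j)(\sum a_j b_j^2)$ with $a_j=|Q_{ij}|$; since $\sum_j|Q_{ij}|\le\sqrt p$ this is still lossy. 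A cleaner route: $\mathbb E[\sum_i\lambda_i^2\check Z_i^2] = \mathbb E[Z^TQ^T\Lambda^2 Q Z] = \mathbb E[Z^TA^TAZ] = \mathrm{Tr}(A^TA\,\mathbb E[ZZ^T])$, and $\mathbb E[ZZ^T]$ has all diagonal entries $\mathbb E[Z_i^2]\le O(1)$ but possibly large off-diagonal mass. So one bounds $\mathrm{Tr}(A^TA\,\mathbb E[ZZ^T]) = \sum_{i,j}(A^TA)_{ij}\mathbb E[Z_iZ_j]$; using $|\mathbb E[Z_iZ_j]|\le(\mathbb E[Z_i^2]\mathbb E[Z_j^2])^{1/2}\le O(1)$ and $\sum_{i,j}|(A^TA)_{ij}|$ — still not $\|A\|_F^2$ in general. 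The correct argument: $\mathbb E[ZZ^T] = C + \mathbb E[Z]\mathbb E[Z]^T$ where $C$ is the covariance; by Proposition~\ref{pro:carcaterisation_vecteur_linearirement_concentre_avec_moments}, since $Z\in\mathcal E_q$ in $(\mathbb R^p,\|\cdot\|_\infty)$ gives linear concentration of $u^TZ$ with diameter $O(1)$ for $u$ in the $\|\cdot\|_\infty$-unit ball's dual, one gets $\|C\|\le O(1)$ in operator norm, and $\|\mathbb E[Z]\mathbb E[Z]^T\|\le\|\mathbb E[Z]\|^2\le O(1)$, so $\|\mathbb E[ZZ^T]\|\le O(1)$; hence $\mathrm{Tr}(A^TA\,\mathbb E[ZZ^T])\le\|\mathbb E[ZZ^T]\|\,\mathrm{Tr}(A^TA) = O(1)\|A\|_F^2$. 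Then $\mathbb E[\|AZ\|]\le(\mathbb E[\|AZ\|^2])^{1/2}\le O(\|A\|_F)$, which is the first claim.

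\textbf{The matrix case and the main obstacle.} For the second statement, apply the vector case after vectorizing: $(\mathcal M_{p,n},\|\cdot\|_F)$ is isometric to $(\mathbb R^{pn},\|\cdot\|)$ under $\mathrm{vec}$, and the map $X\mapsto AXB$ becomes multiplication by the Kronecker product $B^T\otimes A$, with $\|B^T\otimes A\|_F = \|A\|_F\|B\|_F$. The hypothesis $X\in\mathcal E_2$ in $(\mathcal M_{p,n},\|\cdot\|_F)$ translates to $\mathrm{vec}(X)\in\mathcal E_2$ in $(\mathbb R^{pn},\|\cdot\|)$ — but note the vector lemma was stated for $\mathcal E_q$ in the \emph{infinity} norm, which is weaker input; here we have the \emph{stronger} Euclidean-norm concentration, so the needed bound $\|\mathbb E[\mathrm{vec}(X)\mathrm{vec}(X)^T]\|\le O(1)$ follows even more easily from Proposition~\ref{pro:carcaterisation_vecteur_linearirement_concentre_avec_moments} applied in $(\mathbb R^{pn},\|\cdot\|)$ together with $\|\mathbb E[X]\|_F\le O(1)$. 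Then $\mathbb E[\|AXB\|_F] = \mathbb E[\|(B^T\otimes A)\mathrm{vec}(X)\|] \le (\mathrm{Tr}((B^T\otimes A)^T(B^T\otimes A)\mathbb E[\mathrm{vec}(X)\mathrm{vec}(X)^T]))^{1/2} \le (\|\mathbb E[\mathrm{vec}(X)\mathrm{vec}(X)^T]\|\,\|A\|_F^2\|B\|_F^2)^{1/2} = O(\|A\|_F\|B\|_F)$. The only genuinely delicate point — the ``main obstacle'' — is the first one I stumbled over above: one must \emph{not} try to push the concentration through the rotation $Q$ (that fails, since $\|\cdot\|_\infty$-concentration is not rotation-invariant), but instead pass to the covariance/second-moment operator, whose operator norm is controlled by the \emph{linear} concentration (Propositions~\ref{pro:carcaterisation_vecteur_linearirement_concentre_avec_moments} and the remark following it), and then use the trace inequality $\mathrm{Tr}(A^TA\,\Sigma)\le\|\Sigma\|\,\mathrm{Tr}(A^TA)$. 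Everything else is Jensen and bookkeeping of constants via the $O(\cdot)$ notation of Remark~\ref{rem:definiiton_of_the_order}.
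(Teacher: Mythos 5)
Your final argument is exactly the paper's proof: bound $\Sigma=\mathbb E[ZZ^T]=C_2^Z+\mathbb E[Z]\mathbb E[Z]^T$ in operator norm via Proposition~\ref{pro:carcaterisation_vecteur_linearirement_concentre_avec_moments}, apply Jensen and $\tr(A^TA\,\Sigma)\leq\|\Sigma\|\,\|A\|_F^2$, then handle the matrix case by vectorizing and using $\|B^T\otimes A\|_F=\|A\|_F\|B\|_F$. The detour through the SVD/rotation (which you correctly abandon) is not needed; the covariance route you settle on is the one the paper takes, including the same slightly loose step of reading the Euclidean operator-norm bound on $\Sigma$ off the linear concentration hypothesis.
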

Thanks to \ref{pro:carcaterisation_vecteur_linearirement_concentre_avec_moments}, we know that the condition $\|\mathbb E[ZZ^T]\|\leq O(1)$ can be obtained if one assumes that $Z \in \mathcal E_q$ and $\|\mathbb E[Z]\| \leq O(1)$ thanks to the inequality $\mathbb E[ZZ^T] = C^Z_2 + \mathbb E[Z] \mathbb E[Z]^T$, where $C_Z$ is the covariance of $Z$. The same way, 
\begin{proof}
One can bound with Jensen's inequality:
\begin{align*}
  \mathbb E[\|AZ\|] \leq \sqrt{\mathbb E[Z^TA^TAZ]} = \sqrt{\mathbb E[\tr(\Sigma A^TA)]} \leq \sqrt{\left\Vert \Sigma\right\Vert} \|A\|_F \leq O(\|A\|_F).
\end{align*}
The second result is basically the same. If we introduce $\check X \in \mathbb R^{pn}$ satisfying $\check X_{i(j-1)+j} = X_{i,j}$, we know that $\tilde X \propto \mathcal E_2$ like $X$ (since $\|\tilde X\| = \|X\|_F$) and thanks to the previous result we can bound:
\begin{align*}
  \mathbb E[\|AXB\|_F]=\mathbb E[\|A\otimes B \tilde X\|]\leq O \left(\|A \otimes B\|_F\right) = O \left(\|A\|_F\|B\|_F\right).
\end{align*}
\end{proof}

\medskip

Returning to Lipschitz concentration, in order to control the concentration of the sum $X+Y$ or the product $XY$ of two random vectors $X$ and $Y$, a first step is to express the concentration of the concatenation $(X,Y)$. This last result is easily obtained for the class of linearly concentrated random vectors but a tight concentration of the product with good observable diameter is in general not accessible. 
In the class of Lipschitz concentrated vectors, the concentration of $(X,Y)$ is far more involved, and assumptions of independence here play a central role (unlike for linear concentration). 

\medskip

The sum being a $2$-Lipschitz operation (for the norm $\|\cdot \|_{\ell^\infty}$), the concentration of $X+Y$ is easily handled with Proposition~\ref{pro:stabilite_lipschitz_mappings} and directly follows from the concentration of $(X,Y)$. For products of vectors, more work is required.

\section{Multi regime concentration of expression with concentrated variations}
Given four parameters $\sigma_1,\sigma2,q_1,q_2>0$, such that $\sigma_1\leq \sigma_2$ and $q_1\leq q_2$, we have the equivalence:
\begin{align}\label{eq:definition_t_sigma_q}
  e^{-c(t/\sigma_1)^{q_1}}\geq e^{-c(t/\sigma_2)^{q_2}}&
  &\Longleftrightarrow&
  & t\leq \left( \frac{\sigma_1^{q_1}}{\sigma_2^{q_2}} \right)^{\frac{1}{q_1-q_2}},
\end{align}
one then introduce naturally the threshold $t_{\sigma_1, q_1}^{\sigma_2, q_2} \equiv \left( \frac{\sigma_1^{q_1}}{\sigma_2^{q_2}} \right)^{\frac{1}{q_1-q_2}}$ that will allow us to distinguish the different regimes of exponential concentration and see is some of them can be removed when they are irrelevant for all $t>0$.
 \begin{definition}\label{def:canonical_multiregime parameters}
   Given $m \in \mathbb N_*$, a set of $m$ couples of parameters $(q_1,\sigma_1), \ldots, (q_m,\sigma_m) \in (\mathbb R^2)^m$ is called a family of multi-regime parameters if:
   \begin{itemize}
     \item $q_1>\cdots >q_m>0$
     \item $\sigma_1>\cdots >\sigma_m>0$.
   \end{itemize}
This family is said to be thrifty if $m\leq 2$ or $\forall 1 \leq k<l\leq m$:
\begin{align*}
  t_{\sigma_k,q_k}^{\sigma_{k+1},q_{k+1}} \leq t_{\sigma_k,q_k}^{\sigma_{l},q_{l}} \leq t_{\sigma_{l-1},q_{l-1}}^{\sigma_{l},q_{l}},&
  &\text{where we noted:}&
    &t_{\sigma_k,q_k}^{\sigma_{l},q_{l}} \equiv \left( \frac{\sigma_{k}^{q_k}}{\sigma_{l}^{q_l}} \right)^{\frac{1}{q_k-q_l}}.
  \end{align*}
   It is called a free family of multi-regime parameters if in addition for all $1\leq k<l\leq m-1$, $k< l$:
   \begin{align*}
  t_{\sigma_k,q_k}^{\sigma_{k+1},q_{k+1}} < t_{\sigma_l,q_l}^{\sigma_{l+1},q_{l+1}}.
  \end{align*}
 \end{definition}
This definition is justified by the following proposition that explains how one can remove some regimes of concentration when they are always under other regimes. One first need a preliminary lemma that we provide without proof since it is trivial.

\begin{lemma}\label{lem:time_limit_relation}
  Given three couples $(\sigma_{1},q_{1}),(\sigma_{2},q_{2}),(\sigma_{3},q_{3}) \in \mathbb R_+^2$, such that $\sigma_1\leq \sigma_2\leq \sigma_3$ and $q_1\leq q_2\leq q_3$, if we note for any $l,k \in [3]$, $l\neq k$, $t_{k, l}\equiv t_{\sigma_k,q_k}^{\sigma_{l},q_{l}} = (\sigma_k^{q_k}/\sigma_l^{q_l})^{1/q_k - q_l}$, then, we have the relation:
  \begin{align*}
    t_{1,2}^{q_1 - q_2}t_{2,3}^{q_2 - q_3} = t_{1,3}^{q_1 - q_3}.
  \end{align*}
\end{lemma}
Now, we can set the following proposition:

\begin{proposition}\label{pro:regime_cleaning}
  In the setting and with the notations of Lemma~\ref{lem:time_limit_relation} one has the equivalence:
  \begin{align*}
     t_{1,2}\geq t_{1,3}&
     &\Longleftrightarrow&
     &t_{1,3} \geq t_{2,3}
   \end{align*}
  and if one assumes the two assertions of this equivalence, one can bound:
  \begin{align*}
    \forall t>0 \quad: e^{-(t/\sigma_2)^{q_2}} \leq \sup(e^{-(t/\sigma_1)^{q_1}}, e^{-(t/\sigma_{3})^{q_3}}).
  \end{align*}
  In other words, $\mathcal E_{q_2}(\sigma_2) \leq \mathcal E_{q_1}(\sigma_1) + \mathcal E_{q_3}(\sigma_3)$.
\end{proposition}
\begin{proof}
  The equivalence is just a consequence of Lemma~\ref{lem:time_limit_relation}. If we now assume that the two assertion of the equivalence are true, one can bound when $t \leq t_{1,3}$ thanks to \eqref{eq:definition_t_sigma_q} and the fact that $t_{1,3}\leq t_{1,2}$:
  \begin{align*}
    e^{-c(t/\sigma_2)^{q_2}} \leq e^{-c(t/\sigma_1)^{q_1}}.
  \end{align*}
  The same way, when $t \geq t_{1,3}\geq t_{2,3}$:
  \begin{align*}
    e^{-c(t/\sigma_2)^{q_2}} \leq e^{-c(t/\sigma_3)^{q_3}},
  \end{align*}
  we thus exactly showed that $\mathcal E_{q_2}(\sigma_2) \leq \mathcal E_{q_1}(\sigma_1) + \mathcal E_{q_3}(\sigma_3)$.
\end{proof}

An important local characterization of thrifty multi-regime families is given in next proposition.
\begin{proposition}\label{pro:identity_multiregime_parameters}
  Given $m\geq 3$, a family of multi regime parameters $(q_1,\sigma_1), \ldots, (q_m,\sigma_m) \in (\mathbb R^2)^m$ is thrifty if and only if one of the following property is satisfied:
  \begin{itemize}
    \item for any tuple of integers $(i,j,k,l) \in [m]^4$ such that $i \leq j \leq k \leq l$, we have the inequality:
  \begin{align}\label{eq:relation_sensible_multiregime_1}
    \left( \frac{\sigma_i}{\sigma_j} \right)^{\frac{q_iq_j}{q_{i} - q_{j}}}
    \leq \left( \frac{\sigma_k}{\sigma_l} \right)^{\frac{q_kq_l}{q_{k} - q_{l}}}
  \end{align}
    \item for any $i \in [m-2]$:
    \begin{align}\label{pro:charct_sensible_3}
    \left( \frac{\sigma_i}{\sigma_{i+1}} \right)^{\frac{q_iq_{i+1}}{q_i - q_{i+1}}} \leq \left( \frac{\sigma_{i+1}}{\sigma_{i+2}} \right)^{\frac{q_{i+1}q_{i+2}}{q_{i+1} - q_{i+2}}},
  \end{align}
  \end{itemize}
\end{proposition}

\begin{proof}
  Let us start with the following equivalence:
  \begin{align}\label{eq:expession_flat_sensible}
  t_{\sigma_i,q_i}^{\sigma_{j},q_{j}} \leq t_{\sigma_j,q_j}^{\sigma_{k},q_{k}}
  &\Longleftrightarrow
  \left( \frac{\sigma_{i}^{q_i}}{\sigma_{j}^{q_{j}}} \right)^{q_j-q_{k}} \leq \left( \frac{\sigma_{j}^{q_j}}{\sigma_{k}^{q_{k}}} \right)^{q_i-q_{j}}\nonumber\\
  &\Longleftrightarrow
  \sigma_i^{\frac{1}{q_{k}} - \frac{1}{q_{j}}} \sigma_{k}^{\frac{1}{q_{j}} - \frac{1}{q_{i}}} \leq \sigma_j^{\frac{1}{q_{k}} - \frac{1}{q_{i}}}\\
  &\Longleftrightarrow \left( \frac{\sigma_i}{\sigma_j} \right)^{\frac{1}{q_{k}} - \frac{1}{q_{j}}}  \leq \left( \frac{\sigma_j}{\sigma_k} \right)^{\frac{1}{q_{j}} - \frac{1}{q_{i}}}
  &\Longleftrightarrow
  \left( \frac{\sigma_i}{\sigma_j} \right)^{\frac{q_iq_j}{q_{i} - q_{j}}}  \leq \left( \frac{\sigma_j}{\sigma_k} \right)^{\frac{q_jq_k}{q_{j} - q_{k}}}&\nonumber
  \end{align}
  (the last equivalence is obtained putting the inequality to the power $\frac{q_jq_k}{q_j-q_k} \frac{q_iq_j}{q_i-q_j}\geq 0$, we have indeed $\frac{\sigma_i}{\sigma_j} \geq 1$).
  Now, given a supplementary integer $l\geq k$, assuming $t_{\sigma_i,q_i}^{\sigma_{j},q_{j}} \leq t_{\sigma_j,q_j}^{\sigma_{k},q_{k}} \leq t_{\sigma_k,q_k}^{\sigma_{l},q_{l}}$, we can conclude thanks to the previous equivalence:
  \begin{align*}
      \left( \frac{\sigma_i}{\sigma_j} \right)^{\frac{q_iq_j}{q_{i} - q_{j}}}  \leq \left( \frac{\sigma_j}{\sigma_k} \right)^{\frac{q_jq_k}{q_{j} - q_{k}}} \leq \left( \frac{\sigma_k}{\sigma_l} \right)^{\frac{q_kq_l}{q_{k} - q_{l}}}.
  \end{align*}
  The reverse implication is simply obtained taking $k=j$.

  Since relation~\eqref{pro:charct_sensible_3} is a particular case of relation~\eqref{eq:relation_sensible_multiregime_1}, we just show the reverse implication. The proof is done iteratively on $k-i$ with the characterisation given in \eqref{eq:expession_flat_sensible}. When $k-i=2$, we are in the case of relation~\eqref{pro:charct_sensible_3}, given an integer $n \in \{2,\ldots, m-1\}$, we then assume that relation~\eqref{eq:expession_flat_sensible} is true when $k-i\leq n$, let us then consider a case where $k-i = n+1$ and, say, $k-j\geq 2$, employing $\sigma_{j}$ and $\sigma_{j+1}$ as a pivot ($j+1-i, k-j \leq n$) we bound:
  \begin{align*}
    \sigma_i^{\frac{1}{q_{k}} - \frac{1}{q_{j}}} \sigma_{k}^{\frac{1}{q_{j}} - \frac{1}{q_{i}}}
    &\leq \frac{\sigma_i^{\frac{1}{q_{k}} - \frac{1}{q_{j}}} \left(\sigma_j^{\frac{1}{q_{k}} - \frac{1}{q_{j+1}}} \sigma_{k}^{\frac{1}{q_{j+1}} - \frac{1}{q_{j}}} \right)^{\left( \frac{1}{q_{j}} - \frac{1}{q_{i}} \right) / \left( \frac{1}{q_{j+1}} - \frac{1}{q_{j}} \right)  } }{\sigma_j^{\left( \frac{1}{q_{k}} - \frac{1}{q_{j+1}} \right)\left( \frac{1}{q_{j}} - \frac{1}{q_{i}} \right) / \left( \frac{1}{q_{j+1}} - \frac{1}{q_{j}} \right)  } }\\
    &\leq \frac{\sigma_i^{\frac{1}{q_{k}} - \frac{1}{q_{j}}} \left(\sigma_{j+1}^{\frac{1}{q_{k}} - \frac{1}{q_{j}}} \right)^{\left( \frac{1}{q_{j}} - \frac{1}{q_{i}} \right) / \left( \frac{1}{q_{j+1}} - \frac{1}{q_{j}} \right)  } }{\sigma_j^{\left( \frac{1}{q_{k}} - \frac{1}{q_{j+1}} \right)\left( \frac{1}{q_{j}} - \frac{1}{q_{i}} \right) / \left( \frac{1}{q_{j+1}} - \frac{1}{q_{j}} \right)  } }\\
    &\leq \frac{\left(\sigma_i^{\frac{1}{q_{j+1}} - \frac{1}{q_{j}}} \sigma_{j+1}^{\frac{1}{q_{j}} - \frac{1}{q_{i}}} \right)^{\left( \frac{1}{q_{k}} - \frac{1}{q_{j}} \right) / \left( \frac{1}{q_{j+1}} - \frac{1}{q_{j}} \right)  } }{\sigma_j^{\left( \frac{1}{q_{k}} - \frac{1}{q_{j+1}} \right)\left( \frac{1}{q_{j}} - \frac{1}{q_{i}} \right) / \left( \frac{1}{q_{j+1}} - \frac{1}{q_{j}} \right)  } }
    \ \ \leq \frac{\sigma_j^{\left(\frac{1}{q_{j+1}} - \frac{1}{q_{i}}  \right)\left( \frac{1}{q_{k}} - \frac{1}{q_{j}} \right) / \left( \frac{1}{q_{j+1}} - \frac{1}{q_{j}} \right)  } }{\sigma_j^{\left( \frac{1}{q_{k}} - \frac{1}{q_{j+1}} \right)\left( \frac{1}{q_{j}} - \frac{1}{q_{i}} \right) / \left( \frac{1}{q_{j+1}} - \frac{1}{q_{j}} \right)  } }.
  \end{align*}
  One can then conclude thanks to the identity:
  \begin{align*}
    \left(\frac{1}{q_{j+1}} - \frac{1}{q_{i}}  \right)\left( \frac{1}{q_{k}} - \frac{1}{q_{j}} \right) - \left( \frac{1}{q_{k}} - \frac{1}{q_{j+1}} \right)\left( \frac{1}{q_{j}} - \frac{1}{q_{i}} \right) = \left( \frac{1}{q_{j+1}} - \frac{1}{q_{j}} \right)\left( \frac{1}{q_{k}} - \frac{1}{q_{i}} \right)
  \end{align*}

\end{proof}
This last proposition allows to extract thrifty multi-regime families of multi-regime parameters from non thrifty ones.
\begin{proposition}\label{pro:unique_free_family}
  Given $m\geq 3$ and a family of multi regime parameters $(q_1,\sigma_1), \ldots, (q_m,\sigma_m) \in (\mathbb R_+^2)^m$, there exists a unique free thrifty extraction of this family $(q_{i_1},\sigma_{i_1}), \ldots, (q_{i_k},\sigma_{i_k}) \in (\mathbb R_+^2)^k$ for $k\leq m$ such that $1 \leq i_1 <\cdots <i_k = m$ and for any $t >0$:
  \begin{align}\label{eq:regime_inequality_extraction}
    \sup_{i\in [m]} \mathcal E_{q_i}(\sigma_i) = \sup_{j\in [k]} \mathcal E_{q_{i_j}}(\sigma_{i_j}).
  \end{align}
\end{proposition}
\begin{proof}
  For all $k,l\in[m]$, we employ again the short notation $t_{k, l}\equiv t_{\sigma_k,q_k}^{\sigma_{l},q_{l}}$. The extraction can be done iteratively. The first component of our iterative extraction is $(\sigma_{i_1}, q_{i_1}) = (\sigma_{1}, q_{1})$. Now, given $2\leq l\leq m-1$ and assuming that $(q_{i_1},\sigma_{i_1}), \ldots, (q_{i_l},\sigma_{i_l})$ is thrifty, we introduce the set:
  \begin{align*}
    A_{i_l} \equiv \left\{ i \in \{l+1,\ldots, m\} \ s.t. \ t_{i_{l-1}, i_{l}} < t_{i_{l}, i} \ \right\},
  \end{align*}
  and if $A_{i_l}$ is not empty, we denote $i_{l+1} = \inf A_{i_l}$. We know from Proposition~\ref{pro:identity_multiregime_parameters} that $(q_{i_ {1} },\sigma_{i_ {1} }), \ldots, (q_{i_ {l+1} },\sigma_{i_ {l+1} })$ is thrifty, we can thus repeat the procedure until we found $i_{l_0} \in [m]$ such that such that $ A_{i_{l_0}}$ is empty (we know that $A_{i_2} \supset\cdots \supset A_{i_l} \supset A_{1} \supset \cdots A_{m} = \emptyset$), then, noting $k = l_0 + 1$, and $i_k = m$, by construction, $(q_{i_ {1} },\sigma_{i_ {1} }) \cdots (q_{i_ {k} },\sigma_{i_ {k} })$ is a thrifty family.


  To prove \eqref{eq:regime_inequality_extraction}, we consider $l\in [k-1]$, and $i\in[m]$ such that $i_l\leq i \leq i_{l+1}$ we know from the definition of $A_{i_1} = \emptyset$ that $t_{i_l,i} \geq t_{i_{l}, i_{l+1}}$, therefore we know from Proposition~\ref{pro:regime_cleaning} that we also have the inequality $t_{i,i_{l+1}} \leq t_{i_{l}, i_{l+1}}$ and one can bound:
  \begin{align*}
    \mathcal E_{q_i}(\sigma_i) \leq \mathcal E_{q_{i_l}}(\sigma_{i_l}) + \mathcal E_{q_{i_{l+1}}}(\sigma_{i_{l+1}}),
  \end{align*}
  which ends our proof.
\end{proof}

Let us now provide 
We can then adapt Proposition~\ref{pro:identity_multiregime_parameters} to the case where $\forall i \in [m]$, $q_i = \frac{q}{i}$, for some $q>0$, to obtain the following characterization.
\begin{theorem}\label{the:concentration_concentrated_variations}
  Given two (sequence of) normed or (semi normed) vector spaces $(\mathbb E, \|\cdot\|)$ and $(F, \|\cdot \|)$, a (sequence of) random vectors $Z \in (\mathbb E, \|\cdot\|)$ such that $Z \propto \mathcal E_{q}(\sigma)$ and a mapping $\Phi:E \to F$ satisfying on the set of drawings of $Z$:
  \begin{align*}
   \left\Vert \phi(Z) -  \phi(Z')\right\Vert
  \leq V\left\Vert Z - Z'\right\Vert,
  \end{align*}
  for $Z'\in E$, an independent copy of $Z$ and $V$ a random variable satisfying:
  \begin{align*}
    V \in O(\sigma_0) \pm \sum_{l=1}^m
       \mathcal E_{q_l}\left(\sigma_{l}\right),
  \end{align*}
  for some multi-regime family $(\sigma_1, q_1),\ldots, (\sigma_{m}, q_{m})$ and $\sigma_0\in (0, \sigma_1]$.
  Then one can deduce the concentration:
   \begin{align*}
     \phi \left(Z\right) \propto \mathcal E_{q}\left(\sigma_{0}\sigma\right) + \sum_{i = 1}^m \mathcal E_{\frac{q_lq}{q_l + q}}\left(\sigma_{l}\sigma\right)
   \end{align*}
\end{theorem}
\begin{proof}
  Let us prove the theorem iteratively on the number of regimes describing the concentration of $V$. In a first time,we consider that the multi-regime family $(\sigma_1,q_1),\ldots, (\sigma_m,q_m)$ is thrifty.
  To prove the initializing point we assume that $V \in O(\sigma_0) \pm \mathcal E_{q_1}(\sigma_1)$. Then introducing:
  \begin{align*}
    K_t = \max \left( \sigma_0, \sigma_1\left( \frac{ t}{\sigma\sigma_1} \right)^{\frac{q}{q_1+q}} \right), 
  \end{align*}
  we know that $K_t \geq \sigma_0$ and we can first bound:
  \begin{align*}
    \mathbb P \left( V \geq 2K_t \right) 
    \leq \mathbb P \left( V \geq 2K_t \right) 
    \leq \mathbb P \left(  \left\vert V - \sigma_0 \right\vert \geq K_t \right) 
    \leq  C e^{- c\left( t/ \sigma_1K_t \right)^{q_1}}
    \leq  C e^{- c\left( t/ \sigma_1\sigma \right)^{\frac{qq_1}{q_1+q}}}.
  \end{align*}
  Besides, under $\{V \leq K_t\}$, $\phi$ is $K_t$-Lipschitz, and one can bound:
  \begin{align*}
    \mathbb P \left( \left\vert \Phi(Z) - \Phi(Z') \right\vert\geq t, V\leq 2K_t \right)
    &\leq C e^{- c\left( t/\sigma K_t \right)^q}
    \leq \max \left( C e^{- c\left( t/\sigma \sigma_0 \right)^q}, C e^{- c\left( t/\sigma \sigma_1 \right)^{\frac{qq_1}{q_1+q}}} \right).
  \end{align*}
  Finally we are able to bound:
  \begin{align*}
    \mathbb P \left( \left\vert \Phi(Z) - \Phi(Z') \right\vert\geq t\right) \leq \max \left( C e^{- c\left( t/\sigma \sigma_0 \right)^q}, C e^{- c\left( t/\sigma \sigma_1 \right)^{\frac{qq_1}{q_1+q}}} \right).
  \end{align*}

  Let us now assume that our theorem is true for any $m\leq M-1$ and that we have the concentration:
  \begin{align*}
    \quad V \in O(\sigma_0) \pm \sum_{l\in [M]} \mathcal E_{q_l}(\sigma_l).
  \end{align*}
  With the notation $t_{i-1}\equiv t_{\sigma_{i-1},q_{i-1}}^{\sigma_{i},q_{i}} = (\frac{\sigma_{i-1}^{q_{i-1}}}{\sigma_i^{q_i}})^{\frac{1}{q_{i-1}-q_i}}$, we can bound (since the family is thrifty):
  \begin{align*}
    \forall t >0: \quad \mathbb P \left( V \geq t \ | \ V \leq t_{M-1} \right) 
    \leq \sup_{l \in [M-1]} C e^{-c (t/\sigma_{l})^{q_l}},
  \end{align*}
  and our iteration hypothesis allows us to write:
  \begin{align*}
    \forall t >0: \quad
    \mathbb P \left( \left\vert \Phi(Z) - \Phi(Z') \right\vert\geq t \ | \ V \leq t_{M-1} \right) 
    \leq \sup_{l \in [M-1]} C e^{-c (t/\sigma \sigma_l)^{q_l'}}.
  \end{align*}
  When $V \geq t_{M-1}$, we introduce the parameter
  \begin{align*}
    K_t = \max \left( \sigma_0, \sigma_{M}\left( \frac{ t}{\sigma\sigma_{M}} \right)^{\frac{q}{q_M+q}} \right),
  \end{align*}
  it satisfies:
  \begin{align*}
    \mathbb P \left( V \geq K_t \ | \ V \geq t_{M-1} \right) 
    \leq  C e^{-c (t/\sigma\sigma_{M})^{q_{M}'}}
  \end{align*}
  and:
  \begin{align*}
    \mathbb P \left( \left\vert \Phi(Z) - \Phi(Z') \right\vert\geq t, t_{M-1} \leq V \leq K_t \right)
    \leq \max \left( C e^{- c\left( t/m\sigma \sigma_0 \right)^q}, C e^{- c\left( t/m^{q'_m/q}\sigma \sigma_M \right)^{q_M'}} \right).
  \end{align*}

  When the family $(\sigma_1,q_1),\ldots, (\sigma_m, q_m)$ is not thrifty, one can still consider its free thrifty extraction $(\sigma_{i_1},q_{i_1}),\ldots, (\sigma_{i_k}, q_{i_k})$ thanks to Proposition~\ref{pro:unique_free_family} and then show:
  \begin{align*}
    \Phi(Z)\propto \mathcal E_q(\sigma\sigma_0) +  \sum_{l=1}^k \mathcal E_{\frac{qq_{i_l}}{q + q_{i_l}}} \left( \sigma \sigma_{i_l} \right)\propto \mathcal E_q(\sigma\sigma_0) + \sum_{i=1}^m \mathcal E_{\frac{qq_{i}}{q + q_{i}}} \left( \sigma \sigma_{i} \right)
  \end{align*}
\end{proof}
It is possible to optimize the expression of the concentration in Theorem~\ref{the:concentration_concentrated_variations} if one uses some stability properties provided by the following proposition.
some stability properties of the class of thrifty multi-regime families.

\begin{proposition}\label{pro:stability_thrifty}
  Given a positive parameter $\alpha>0$, a sequence of decreasing positive parameters $q_1>\cdots>q_m>0$, given $\sigma_1>\cdots>\sigma_m>0$ and $\sigma'_1>\cdots>\sigma'_m>0$ such that $(\sigma_1, q_1), \ldots, (\sigma_m, q_m)$ and $(\sigma'_1, q_1), \ldots, (\sigma'_m, q_m)$ are two thrifty multi-regime family, the family $((\sigma_1\sigma'_1)^\alpha, q_1), \ldots, ((\sigma_m\sigma'_m)^\alpha, q_m)$ is also thrifty.

  The same way, given a sequence of decreasing positive parameters $\sigma_1>\cdots>\sigma_m>0$, given $q_1>\cdots>q_m>0$ and $q'_1>\cdots>q'_m>0$ such that $(\sigma_1, q_1'), \ldots, (\sigma_m, q_m')$ and $(\sigma_1, q'_1), \ldots, (\sigma_m, q'_m)$ are two thrifty multi-regime family, the family $(\sigma_1, \alpha\frac{q_1q_1'}{q_1 + q_1'}), \ldots, (\sigma_m, \alpha\frac{q_mq_m'}{q_m + q_m'})$ is also thrifty.
\end{proposition}
\begin{remark}\label{rem:conic_interprestation_of_multi_regime}
  To provide a geometric interpretation of the last proposition, note that the set of families of positive parameters $\mu_1>\cdots>\mu_m$ such that $(e^{\mu_1}, q_1), \ldots, (e^{\mu_m}, q_m)$ is a thrifty multi-regime family is a cone of $\mathbb R^m$. The same way, the set of families of positive parameters $s_1>\cdots>s_m>0$ such that $(\sigma_1, \frac{1}{s_1}), \ldots, (\sigma_m, \frac{1}{s_m})$ is a thrifty multi-regime family is a cone of $\mathbb R^m_+$.
\end{remark}
\begin{proof}
  For all $i\in [m]$, we note:
  \begin{align*}
    \mu_i^{(}{}'{}^{)} = \log(\sigma_i^{(}{}'{}^{)})&
    &\text{and}&
    &s_i^{(}{}'{}^{)} = \frac{1}{q_i{}^{(}{}'{}^{)}}.
  \end{align*}
  It is then easy to see that for any $1<i<m$:
  \begin{align*}
    \left( \frac{\sigma_{i-1} }{\sigma_{i} } \right)^{\frac{q_{i-1}q_{i}}{q_{i-1} - q_{i}}}\leq \left( \frac{\sigma_i }{\sigma_{i+1} } \right)^{\frac{q_iq_{i+1}}{q_i - q_{i+1}}}&
    &\Longleftrightarrow&
    & \frac{\mu_{i-1} - \mu_{i}}{s_{i-1} - s_{i}}\leq \frac{\mu_{i} - \mu_{i+1}}{s_{i} - s_{i+1}},
   \end{align*}
   and the same identity hold for $\mu_{i-1}', \mu_{i}', \mu_{i+1}', s_{i-1}, s_{i}, s_{i+1}$ and $\mu_{i-1}, \mu_{i}, \mu_{i+1}, s'_{i-1}, s'_{i}, s'_{i+1}$.
   One can then conclude with the implications:
   \begin{align*}
     \left\{\begin{aligned}
       &\frac{\mu_{i-1} - \mu_{i}}{s_{i-1} - s_{i}}\leq \frac{\mu_{i} - \mu_{i+1}}{s_{i} - s_{i+1}}\\
       &\frac{\mu'_{i-1} - \mu'_{i}}{s_{i-1} - s_{i}}\leq \frac{\mu'_{i} - \mu'_{i+1}}{s_{i} - s_{i+1}}
     \end{aligned}\right.
     &\Longrightarrow
      \frac{\mu_{i-1} + \mu'_{i-1} - (\mu_{i} + \mu'_{i})}{s_{i-1} - s_{i}}\leq \frac{\mu_{i} + \mu_{i} - (\mu_{i+1} + \mu'_{i+1})}{s_{i} - s_{i+1}}\\
     &\Longrightarrow
     \left( \frac{\sigma_{i-1}\sigma'_{i-1} }{\sigma_{i}\sigma'_{i} } \right)^{\frac{1}{\frac{1}{q_{i-1}} - \frac{1}{q_{i}}}}\leq \left( \frac{(\sigma_i\sigma'_i) }{\sigma_{i+1}\sigma'_{i+1}} \right)^{\frac{1}{\frac{1}{q_i} - \frac{1}{q_{i+1}}}}\\
     \left\{\begin{aligned}
       &\frac{\mu_{i-1} - \mu_{i}}{s_{i-1} - s_{i}}\leq \frac{\mu_{i} - \mu_{i+1}}{s_{i} - s_{i+1}}\\
       &\frac{\mu_{i-1} - \mu_{i}}{s'_{i-1} - s'_{i}}\leq \frac{\mu_{i} - \mu_{i+1}}{s'_{i} - s'_{i+1}}
     \end{aligned}\right.
     &\Longrightarrow
      \frac{\mu_{i-1} - \mu_{i}}{s_{i-1} + s'_{i-1} - (s_{i} + s'_{i})}\leq \frac{\mu_{i} - \mu_{i+1}}{s_{i} + s'_{i} - (s_{i+1} + s'_{i+1})}\\
     &\Longrightarrow
      \left( \frac{\sigma_{i-1} }{\sigma_{i} } \right)^{\frac{1}{q_{i-1}} + \frac{1}{q'_{i-1}} - \left( \frac{1}{q_{i}} + \frac{1}{q'_{i}} \right)}\leq \left( \frac{\sigma_i }{\sigma_{i+1} } \right)^{\frac{1}{q_{i}} + \frac{1}{q'_{i}} - \left( \frac{1}{q_{i+1}}+ \frac{1}{q'_{i+1}} \right)}
   \end{align*}
\end{proof}
Taking into account those stability properties of the class of thrifty multi-regime families, one can obtain the following offshoot of Theorem~\ref{the:concentration_concentrated_variations}.
\begin{corollary}\label{cor:concentration_produit}
  In the setting of Theorem~\ref{the:concentration_concentrated_variations}, if we further assume that $(q_1,\sigma_1), \ldots, (q_m,\sigma_m)$ is thrifty, and we note $l_0 = \inf\{l \in [m] \ | \  \sigma_0 \leq t_{\sigma_l,q_l}^{\sigma_{l+1},q_{l+1}}\}$, then $(q,\sigma\sigma_0), (\frac{qq_{l_0}}{q + q_{l_0}},\sigma\sigma_{l_0}),\ldots, (\frac{qq_{m}}{q + q_{m}},\sigma\sigma_{m})$ is also thrifty and one has the concentration:
  \begin{align*}
    \Phi(Z) \propto \mathcal E_q(\sigma \sigma_0) + \sum_{l=l_0}^m \mathcal E_{\frac{q_lq}{q_l + q}}\left(\sigma_{l-1}\sigma\right)
  \end{align*}
\end{corollary}
\begin{proof}
  For simplicity, we introduce for all $i\in[m]$ the notation:
  \begin{align*}
    \sigma_i' = \sigma \sigma_i,&
    &\text{and}&
    &q_i' = \frac{qq_i}{q+q_i}.&
  \end{align*}
  We already know that:
  \begin{align*}
    \Phi(Z) \propto \mathcal E_q(\sigma \sigma_0) + \sum_{l=1}^m \mathcal E_{q_l'}\left(\sigma_{l}'\right),
  \end{align*}
  Considering $l\in \{1,\ldots, l_0-1\}$ the characterization given by Proposition~\ref{pro:identity_multiregime_parameters} allows us to set the equivalence:
  \begin{align}\label{eq:equivalence_sigma_0_t_0}
  t_{\sigma\sigma_0,q}^{\sigma'_{l},q_l'} \leq t_{\sigma'_{l},q'_{l}}^{\sigma'_{l_0},q'_{l_0}}\quad
    &\Longleftrightarrow
    \quad\left( \frac{\sigma_0}{\sigma_l} \right)^{\frac{q q_l'}{q  - q_l'}} \leq \left( \frac{\sigma_l}{\sigma_{l_0}} \right)^{\frac{ q_l'q_{l_0}'}{q_{l_0}'  - q_l'}}\\
    &\Longleftrightarrow\quad
    \left( \frac{\sigma_0}{\sigma_l} \right)^{q_l} \leq \left( \frac{\sigma_l}{\sigma_{l_0}} \right)^{\frac{q_lq_{l_0}}{q_l-q_{l_0}}}
    \quad\Longleftrightarrow\quad
    \sigma_0 \leq \left( \frac{\sigma_l^{q_l}}{\sigma_{l_0}^{q_{l_0}}} \right)^{\frac{1}{q_l-q_{l_0}}} \equiv t_{\sigma_l,q_l}^{\sigma_{l_0},q_{l_0}}
  \end{align}
  (no primes in this last inequality).
  Therefore, since $\sigma_0>t_{\sigma_{l_0-1},q_{l_0-1}}^{\sigma_{l_0},q_{l_0}} \geq t_{\sigma_l,q_l}^{\sigma_{l_0},q_{l_0}}$, the upper equivalence allows us to set:
  \begin{align*}
    t_{\sigma\sigma_0,q}^{\sigma'_{l},q_l'} \geq t_{\sigma'_{l},q'_{l}}^{\sigma'_{l_0},q'_{l_0}},
  \end{align*}
  which implies, thanks to Proposition~\ref{pro:regime_cleaning} $\mathcal E_{q_l'}(\sigma_l') \leq \mathcal E_{q}(\sigma \sigma_0) + \mathcal E_{q_{l_0}'}(\sigma_{l_0}')$ and therefore:
  \begin{align*}
    \Phi(Z) \propto \mathcal E_q(\sigma \sigma_0) + \sum_{l=l_0}^m \mathcal E_{q_l'}\left(\sigma_{l}'\right).
  \end{align*}

  To show that $(q,\sigma\sigma_0), (\frac{qq_{l_0}}{q + q_{l_0}},\sigma\sigma_{l_0}),\ldots, (\frac{qq_{m}}{q + q_{m}},\sigma\sigma_{m})$ is thrifty, let us first recall from Proposition~\ref{pro:stability_thrifty} that $(\sigma_1,q'_1), \ldots, (\sigma_m,q'_m)$ is thrifty since $(\sigma_1,q'_1), \ldots, (\sigma_m,q'_m)$ and $(\sigma_1,q), \ldots, (\sigma_m,q)$ are both thrifty and $(\sigma'_1,q'_1), \ldots, (\sigma'_m,q'_m)$ is thrifty since $(\sigma_1,q'_1), \ldots, (\sigma_m,q'_m)$ and $(\sigma,q'_1), \ldots, (\sigma,q'_m)$ are both thrifty. We then know from Proposition~\ref{pro:identity_multiregime_parameters} that we are just left to show that $t_{\sigma\sigma_0,q}^{\sigma'_{l_0},q_{l_0}'} \leq t_{\sigma'_{l_0},q'_{l_0}}^{\sigma'_{l_0+1},q'_{l_0+1}}$, but this is immediate thanks to equivalence~\eqref{eq:equivalence_sigma_0_t_0} and the hypothesis $\sigma_0 \leq t_{\sigma_{l_0},q_{l_0}}^{\sigma_{l_0+1},q_{l_0+1}}$
  \end{proof}
We end with the characterization of thrifty family in the case where $\forall i, q_i = \frac{q}{i}$, for some $q>0$. This s $\sigma_0>t_{\sigma_{l_0-1},q_{l_0-1}}^{\sigma_{l_0},q_{l_0}}$etting comes naturally when one uses several times Theorem~\ref{the:concentration_concentrated_variations}. Indeed, when $Z \propto \mathcal E_2(\sigma)$ and $V \in O(\sigma_0) \pm \mathcal E_2(\sigma_1)$, $\Phi(Z) \propto \mathcal E_2(\sigma\sigma_0) + \mathcal E_1(\sigma\sigma_1)$ and if $\forall i \in [p]$, $V \in O(\sigma_0) \pm \mathcal E_2(\sigma_1) + \mathcal E_1(\sigma_2)$, $\Phi(Z) \propto \mathcal E_2(\sigma\sigma_0) + \mathcal E_1(\sigma\sigma_1) + \mathcal E_{\frac{1}{2}}(\sigma\sigma_2)$. We will see it appear in some practical examples.
\begin{proposition}\label{pro:identity_multiregime_q_s_i}
  Given $q>0$ and $m$ parameters $\sigma_1,\ldots, \sigma_k>0$, the family of multi regime parameters $(q,\sigma_1), \ldots, (\frac{q}{m},\sigma_m)$ is thrifty if and only if one of the following properties is satisfied: 
  \begin{itemize}
    \item $\forall i\in [m-2]: \quad\sigma_{i} \sigma_{i+2}
        \leq \sigma_{i+1}^2$
    \item $\forall k,l\in [m-1], k\leq l: \quad\sigma_{k} \sigma_{l+  1}
        \leq \sigma_{k+1}\sigma_{l}$.  
  \end{itemize}
\end{proposition}

\begin{proof}
  Thanks to Proposition~\ref{pro:identity_multiregime_parameters}, the only difficulty is to show that the first point implies the second point. It is obtained straightforwardly by multiplying the two inequalities:
  \begin{align*}
    \sigma_k^{l - k} \sigma_{l+1} \leq \sigma_{k+1}^{l+1-k}&
    &\text{and}&
    &\sigma_k \sigma_{l+1}^{l - k} \leq \sigma_{l}^{l+1-k}.
  \end{align*}
\end{proof}


Proposition~\ref{pro:characterization_moments}, which provides a control of the centered moments of a concentrated vector, cannot be directly applied when the concentration follows differing exponential regimes as in Theorem~\ref{the:Concentration_produit_de_vecteurs_d_algebre_optimise}. We give here a generalization of this result.
\begin{proposition}[Moment characterization of multi-regime concentration]\label{pro:characterization_exponential_concentration_multiple_regime}
  Given an integer $m\in \mathbb N$, and a sensible family of $2m$ multi-regime parameters $\sigma_1, \ldots, \sigma_m>0$, $q_1, \ldots, q_m>0$, a random variable $Z\in \mathbb R$ satisfies the concentration:
  \begin{align*}
     Z \propto \sum_{l=1}^m \mathcal E_{q_l}(\sigma_l )
   \end{align*} 
   for some constants $C,c>0$ if and only if there exist two constants $C',c'>0$ depending only on $C,c$ such that for all $r > 0$, we have the bound:
  \begin{align}\label{eq:borne_moment}
      \mathbb E \left[\left\vert Z - \mathbb E \left[Z\right]\right\vert^r\right]    \leq C'\max_{l\in [m]}   \left(\frac{r}{q_lc'}\right)^{\frac{r}{q_l}} \sigma_l^r.
  \end{align}
\end{proposition}

\begin{proof}
This proof is mainly a rewriting of \cite[Proposition 1.10]{LED05} with a fine study of the different concentration regimes.
We start with the direct implication which is easier to prove. Assume that there exists two constants $C,c>0$ such that:
\begin{align*}
  \forall t >0, \ \ \mathbb P \left(\left\vert Z - \mathbb E[Z]\right\vert\geq t\right) \leq C\max_{l \in [m]}  e^{-c(t/\sigma_l)^{q_l}}.
\end{align*}
Given $r>0$:
  \begin{align*}
    \mathbb{E}\left[ \vert Z - \mathbb E[Z] \vert^{r}\right]
    &=\int_{0}^{\infty} \mathbb{P}\left( \vert Z - \mathbb E[Z] \vert^{r} \geq t\right) dt\\
    &=\int_{0}^{\infty} rt^{r-1}\mathbb{P}\left( \vert Z - \mathbb E[Z] \vert \geq t\right) dt \\
    &\leq \max_{l\in [m]} C r \int_{0}^{\infty} t^{r-1}  e^{-(t/\sigma_lc^{-1/q_l})^{q_l}}dt \\
    &=  \max_{l\in [m]} C \left( \frac{\sigma_l}{c^{1/q_l}} \right)^{r}  r\int_{0}^{\infty} t^{r-1}  e^{- c t^{q_l}}dt,
  \end{align*}
  and, if we assume that $r\geq q_1$  ($\geq q_l$ for all $l\in [m]$):
  \begin{align*}
    r\int_{0}^{\infty} t^{r-1}  e^{- t^{q_l}}dt
    &= \frac{lr}{q}\int_{0}^{\infty} t^{\frac{r}{q_l}-1} e^{- t}dt
    = \frac{r}{q_l}\Gamma\left(\frac{r}{q_l}\right)
     \leq \left(\frac{r}{q_l}\right)^{\frac{r}{q_l}},
  \end{align*} 
  when $r<q_1$, one can still bound with Jensen's inequality (since $\frac{r}{q_1}\leq 1$):
  \begin{align*}
    \mathbb{E}\left[  \vert Z - \mathbb E[Z] \vert^{r}\right]
    &\leq \mathbb{E}\left[  \vert Z - \mathbb E[Z] \vert^{q_1}\right]^{\frac{r}{q_1}} 
    \leq C^{\frac{r}{q_1}} \max_{l\in [m]} l^{\frac{r}{q_l}} (c\sigma_l)^r \leq C^{\frac{r}{q_1}} m^{m} (c\sigma_1)^r.
  \end{align*}
  Since $m^m,\max(C,1) \leq O(1)$, we can choose cleverly our constants to set the first implication of the proposition.

  Let us now assume \eqref{eq:borne_moment}. 
  We deduce from Markov inequality and basic integration calculus that $\forall r>0$:
  \begin{align}\label{eq:borne_P_markov}
    \mathbb P \left(\left\vert Z - \mathbb E[Z]\right\vert\geq t\right) \leq \frac{\mathbb E \left[\left\vert Z - \mathbb E \left[Z\right]\right\vert^r\right]}{t^r} 
    \leq  C \max_{l\in[m]}   \left(\frac{r (\sigma_l/t)^{q_l}}{q_lc}\right)^{\frac{r}{q_l}} .
  \end{align}
  Given any $k,l \in [m]$, noting:
  \begin{align*}
     t_{k,l} \equiv \left( \frac{\sigma_{k}^{q_k}}{\sigma_{l}^{q_l}} \right)^{\frac{1}{q_k-q_l}}
   \end{align*} 
   and we note $t_{l} \equiv t_{l-1,l}$.
  We know in particular that $0 = t_0 \leq t_1 \leq \cdots \leq t_{m+1} = \infty $. Given $l\in [m]$ and $t \in [t_{l}, t_{l+1}]$ if we chose $r = \frac{cq_m}{e} (\frac{t}{\sigma_l})^{q_l }$, then, for all $k\in[m]$, we want to bound with a $\mathcal E_l$ decay the quantity:
  \begin{align*}
    c_k(t) 
    \equiv C \left(\frac{r }{cq_k}(\sigma_k/t)^{q_k}\right)^{\frac{r}{q_k}} 
    = C \left(\frac{q_m}{e q_k} \left(\frac{\sigma_k^{q_k}\sigma_l^{-q_l}}{t^{q_k-q_l}}\right)\right)^{\frac{q_m}{eq_k}(\frac{t}{\sigma_l})^{q_l }}
  \end{align*}
  to be able to bound the concentration inequality \eqref{eq:borne_P_markov}.

  If $k=l$, we have directly:
  \begin{align*}
     c_k(t) = c_l(t) 
     = C \left( \frac{q_m}{eq_l} \right)^{\frac{cq_m}{eq_l}(\frac{t}{\sigma_l})^{q_l }} 
     \leq C e^{-\frac{cq_m}{eq_1}(\frac{t}{\sigma_l})^{q_l }}.
   \end{align*} If $k \leq l-1$, $q_k \geq q_l$ then $t_{k,l} \leq t_l \leq t$ which implies $1/t^{q_k-q_l} \leq 1/t_{k,l}^{q_k-q_l}$ and:
  \begin{align*}
    c_k(t) 
    \leq  \left(\frac{q_m}{e q_k} \left(\frac{\sigma_k^{q_k}\sigma_l^{-q_l}}{t_{k,l}^{q_k-q_l}}\right)\right)^{\frac{cq_m}{eq_k}(\frac{t}{\sigma_l})^{q_l }}
     \leq C e^{-\frac{cq_m}{eq_1}(\frac{t}{\sigma_l})^{q_l }} .
  \end{align*}
  And the same way, when $k \geq l+1$, $q_k \leq q_l$, then $t \leq t_{l+1} \leq t_{l,k}$, and we can bound $1/t^{q_k-q_l} = t^{q_l-q_k} \leq t_{l,k}^{q_l-q_k}$ which allows us to conclude again that $c_k(t) \leq C e^{\frac{cq_m}{eq_1}(\frac{t}{\sigma_l})^{q_l }} $. 
  When $t \in  (0,t_1]$, choosing $r = \frac{q_m}{e} (\frac{t}{c\sigma_1})^{q_1}$, we show the same way that $\forall k\in[m]$, $c_k(t) \leq C e^{-\frac{k}{me}(\frac{t}{c\sigma_1})^{q_1}} $.
   We eventually obtain for all $t \in \cup_{0 \leq l \leq m}(t_{l},t_{l+1}] \supset \mathbb R_*^+$:
  \begin{align*}
    \mathbb P \left(\left\vert Z - \mathbb E[Z]\right\vert\geq t\right) \leq \max_{l\in[m]}C e^{-c'(\frac{t}{\sigma_l})^{q_l }},
  \end{align*}
  with $c'=\frac{cq_m}{eq_1}$. This is the looked for concentration.
  \end{proof}

\begin{remark}\label{rem:controle_des_moments}
  Proposition~\ref{pro:characterization_exponential_concentration_multiple_regime} is generally employed to bound the first centered moments of an observation. In this case, $\left(\frac{rl}{q}\right)^{\frac{rl}{q}}\leq O(1)$, and when $\sigma \leq O(\mu_{(1)})$ (which is generally the case), there exists a constant $C>0$ such that we can bound
  for any constant $r>0$ ($r\leq O(1)$):
  \begin{align*}
    \mathbb E \left[\left\vert f(Z) - \mathbb E \left[f(Z)\right]\right\vert^r\right]    \leq C (\sigma_1)^r,
  \end{align*}
  since $\sigma_1\geq \sigma_l$, $\forall l \in[m]$. 
  We then see that the first exponential regime $\mathcal E_{q_1}(\sigma_1)$ controls the first statistics of the observations and we then say that the observable diameter of $Z$ is of order $O(\sigma_1)$. 
\end{remark}

\section{Concentration of generalized products of random vectors}

To treat the product of vectors, we provide a general result of concentration of what could be called ``\textit{multilinearly $m$-Lipschitz mappings}'' on normed vector spaces. Instead of properly defining this class of mappings we present it directly in the hypotheses of the theorem. Briefly, these mappings are multivariate functions which are Lipschitz on each variable, with a Lipschitz parameter depending on the product of the norms (or semi-norms) of the other variables and/or constants. To express the observable diameter of such an observation, one needs a supplementary notation. 

Given a vector of parameters $(\nu_l)_{l\in[m]} \in \mathbb R_+^m$, we denote for any $k\in[m]$:
\begin{align*}
  \nu^{(k)} \equiv \max_{1\leq l_1<\cdots<l_k\leq m} \nu_{l_{1}}\cdots \nu_{l_{k}} = \nu_{(m-k+1)} \cdots \nu_{(m)},
\end{align*}
where $\{\nu_{(l)}\}_{l\in [m]} = \{\nu_l\}_{l\in[m]}$ and $\nu_{(1)} \leq \cdots \leq \nu_{(m)}$.
This Theorem is the iterative consequence of Theorem~\ref{the:concentration_concentrated_variations} provided at the end of the section and that has the particularity to control the variations of $\phi$ with concentrated variables instead of norms (or semi-norms) $(\|\cdot \|_j')_{j\in [m]\setminus \{i\}}$.  

\begin{theorem}[Concentration of generalized product]
\label{the:Concentration_produit_de_vecteurs_d_algebre_optimise}
  
Given a constant $m$ ($m \leq O(1)$), let us consider:
\begin{itemize}
  \item $m$ (sequences of) normed vector spaces $(E_1, \| \cdot \|_1), \ldots,(E_m, \| \cdot \|_m)$.
  \item $m$ (sequences of) norms (or semi-norms) $\| \cdot \|_1', \ldots, \| \cdot \|_m'$, respectively defined on $E_1,\ldots, E_m$.
  \item $m$ (sequences of) random vectors $Z_1 \in E_1,\ldots, Z_m \in E_m$ satisfying $$Z\equiv(Z_1,\ldots,Z_m) \propto  \mathcal E_q(\sigma)$$
  for some (sequence of) positive numbers $\sigma\in \mathbb R_+$, and for both norms\footnote{One just needs to assume the concentration $\| Z_i \|'_ i \in \mu_i \pm \mathcal E_q(\sigma)$; the global concentration of $Z_i$ for the norm (or seminorm) $\|\cdot\|_i'$ is not required.} $\Vert z_1,\ldots,z_m \Vert_{\ell^\infty} = \sup_{i=1}^m \Vert z_i \Vert_i$ and $\Vert (z_1,\ldots,z_m) \Vert'_{\ell^\infty} = \sup_{i=1}^m \Vert z_i \Vert'_i$ defined on $E = E_1\times \cdots \times E_m$.
  \item a (sequence of) normed vector spaces $(F,\|\cdot \|)$, a (sequence of) mappings $\phi : E_1,\ldots, E_m \rightarrow F$, such that $\forall (z_1,\ldots,z_m) \in E_1\times \cdots\times  E_m$ and $z_i' \in E_i$:
  \begin{align*}
  \left\Vert \phi(z_1,\ldots,z_m) -  \phi(z_1,\ldots,z_{i-1},z_i', \ldots,z_m)\right\Vert  
  \leq \frac{\prod_{j=1}^m  \max(\left\Vert z_j\right\Vert'_j, \mu_j) }{ \max(\left\Vert z_i\right\Vert'_i, \mu_i )}\left\Vert z_i - z_i'\right\Vert_i.
   \end{align*}
   where $\mu_i >0$ is a (sequence of) positive reals such that $\mu_i \geq \mathbb E[ \| Z_i \|'_ i ]$. 
   We further assume $\mu_i \geq O(\sigma)$.\footnote{This is a very light assumption: it is hard to find any practical example where $\mu_i \ll \sigma$.}
\end{itemize}
  Then we have the concentration :\footnote{Which means that there exist two constants $C,c>0$ such that for all indexes and for all $1$-Lipschitz mapping $f : F \to \mathbb R$, and $\forall t>0$, $\eqref{eq:regime_complet}$ is satisfied.
  Here since $m\leq O(1)$, taking the maximum over $l\in[m]$ is equivalent to taking the sum, up to a small change of the constants; we will thus indifferently write $\phi \left(Z\right) \propto \max_{l \in [m]} \mathcal E_{lq/m}\left(\left(\sigma\mu^{(l-1)}\right)^{\frac{m}{l}}\right) $ or $\phi \left(Z\right) \propto \sum_{l=1}^{m} \mathcal E_{lq/m}\left(\left(\sigma\mu^{(l-1)}\right)^{\frac{m}{l}}\right)$.
  }
  \begin{align}\label{eq:concentration_forme_m_lin}
      \phi \left(Z\right) \propto \max_{l \in [m]} \mathcal E_{q/{l}}\left(\sigma^{l}\mu^{(m-l)}\right) 
    \end{align}

\end{theorem}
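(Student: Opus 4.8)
The plan is to prove the concentration of $\phi(Z)$ by a telescoping/hybrid argument on the $m$ variables, using the independent-copy characterization of Lipschitz concentration (Definition~\ref{def:concentrated_sequence}) and then reading off the resulting tail via the moment characterization (Proposition~\ref{pro:characterization_moments}). First I would fix a $1$-Lipschitz mapping $f:F\to\mathbb R$ and an independent copy $Z'=(Z_1',\ldots,Z_m')$ of $Z$. Writing $\Delta = f(\phi(Z_1,\ldots,Z_m)) - f(\phi(Z_1',\ldots,Z_m'))$, I would decompose it as a telescoping sum $\Delta = \sum_{i=1}^m \delta_i$ where $\delta_i = f(\phi(Z_1',\ldots,Z_{i-1}',Z_i,\ldots,Z_m)) - f(\phi(Z_1',\ldots,Z_i',Z_{i+1},\ldots,Z_m))$, so that each $\delta_i$ involves swapping only the $i$-th coordinate. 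By the multilinear-Lipschitz hypothesis and the $1$-Lipschitzness of $f$, conditionally on all variables except $Z_i$ (and $Z_i'$), the map $z_i\mapsto f(\phi(\ldots,z_i,\ldots))$ is $L_i$-Lipschitz with $L_i = \prod_{j\ne i}\max(\|z_j\|_j',\mu_j)$ (with $\|Z_j'\|_j'$ for $j<i$ and $\|Z_j\|_j'$ for $j>i$). Since $(Z,Z')$ is Lipschitz $q$-concentrated with observable diameter $O(\sigma)$ — and in particular the slice $Z_i$ is, conditionally on the rest — the centered moments of $\delta_i$ satisfy $\mathbb E^{1/r}[|\delta_i|^r]\le C^{1/r}(r/q)^{1/q} c\sigma L_i$ conditionally, by Proposition~\ref{pro:characterization_moments}.

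Next I would control the random Lipschitz constant $L_i$. Each factor $\max(\|Z_j\|_j',\mu_j)$ concentrates around $\mu_j$: using $\|Z_j\|_j' \in \mu_j\pm\mathcal E_q(\sigma)$ (which follows from the hypothesis $\|Z_j\|_j'\in\mu_j\pm\mathcal E_q(\sigma)$ and $\mu_j\ge\mathbb E[\|Z_j\|_j']$) together with $\mu_j\ge O(\sigma)$, one gets $\max(\|Z_j\|_j',\mu_j) = \mu_j(1 + R_j)$ where $R_j\ge 0$ is $q$-subexponential with parameter $O(\sigma/\mu_j)$. Expanding the product of the $m-1$ factors $\mu_j(1+R_j)$ and grouping terms by how many $R_j$'s appear, a subset of size $k$ contributes a term bounded by $\big(\prod_{j\in S}\mu_j\big)\prod_{j\in S}R_j$ whose $\mu$-prefactor is at most the maximal product $\mu^{(m-1-k)}$ of $m-1-k$ of the $\mu$'s (here the index shift comes from excluding coordinate $i$), while $\prod_{j\in S}R_j$ has moments of order $(r)^{k/q}(\sigma)^k\prod_{j\in S}\mu_j^{-1}$. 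Multiplying by the extra $\sigma$ from the $\delta_i$ step and summing over $i$ and over subset sizes $k\in\{0,\ldots,m-1\}$, I expect each ``level'' $l = k+1\in[m]$ to produce a moment bound of the shape $\mathbb E^{1/r}[|\Delta_l|^r] \le C^{1/r} r^{l/q} (c\,\sigma^l\mu^{(m-l)})^{1}$ — more precisely $r^{l/q}$ growth with the product of the $l$ ``$\sigma$-factors'' and $m-l$ of the largest $\mu$'s — which by Proposition~\ref{pro:characterization_moments} read backwards corresponds exactly to a $\mathcal E_{q/l}(\sigma^l\mu^{(m-l)})$ tail. Taking the max (equivalently the sum, since $m\le O(1)$) over $l\in[m]$ gives \eqref{eq:concentration_forme_m_lin}.

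Two technical points deserve care. First, the conditioning: the moment bound for $\delta_i$ is conditional on the other coordinates, so after taking the conditional expectation in $Z_i$ one is left with a quantity that still depends randomly on the $Z_j$, $j\ne i$, namely $c\sigma L_i$ times an absolute constant; one must then take the outer expectation of the $r$-th power of $\sum_i \delta_i$, which I would handle by Minkowski's inequality in $L^r$ to split the sum over $i$, and then, inside each $i$, by the moment expansion of $L_i$ described above, again via Minkowski to split the product expansion into its $\binom{m-1}{k}$ pieces. Because $m\le O(1)$, all these combinatorial factors are $O(1)$ and do not affect the constants in the concentration. Second, one must check that the product $\prod_{j\in S}R_j$ of independent-ish $q$-subexponentials has the claimed moments; since the $R_j$ are not independent in general, I would instead use the generalized Hölder inequality $\mathbb E\big[\prod_{j\in S}R_j^r\big]\le \prod_{j\in S}\mathbb E[R_j^{r|S|}]^{1/|S|}$, and then invoke $\mathbb E[R_j^{s}]\le C(s/q)^{s/q}(c\sigma/\mu_j)^s$ at $s = r|S|$, absorbing the extra $|S|^{s/q}=O(1)$ factor into the constant. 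The main obstacle, and the step I expect to require the most bookkeeping, is precisely this moment expansion of the product Lipschitz constant and the verification that after multiplying by the $\sigma$ from the swap and re-summing, the exponent on $r$ is $l/q$ and the scale is exactly $\sigma^l\mu^{(m-l)}$ — i.e.\ that the ``largest $\mu$'s survive'' combinatorics matches the definition of $\mu^{(m-l)} = \mu_{(l+1)}\cdots\mu_{(m)}$; the concentration machinery itself (Propositions~\ref{pro:characterization_moments} and~\ref{pro:stabilite_lipschitz_mappings}) is then just applied mechanically.
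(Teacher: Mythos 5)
Your overall architecture (telescoping over coordinates, random Lipschitz constants $L_i$, moment expansion of $\prod_{j\neq i}\max(\|Z_j\|_j',\mu_j)$ read back through the multi-regime moment characterization) is plausible and the combinatorics you describe would indeed produce the scales $\sigma^l\mu^{(m-l)}$ with $r^{rl/q}$ moment growth. But there is a genuine gap at the very first step: you assert that, since $(Z_1,\ldots,Z_m)\propto\mathcal E_q(\sigma)$, ``in particular the slice $Z_i$ is [concentrated], conditionally on the rest.'' Joint Lipschitz concentration of the tuple does \emph{not} imply concentration of the conditional law of one coordinate given the others, and the theorem does not assume the $Z_i$ to be independent (indeed it is applied to dependent triples such as $(X,D,Y)$). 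A counterexample: take $Z_1=G\sim\mathcal N(0,I_d)$ and $Z_2=\min(\|G-a\|,\|G-b\|)$ with $\|a-b\|=R$ arbitrarily large; the pair is a $1$-Lipschitz image of $G$, hence $\propto\mathcal E_2$, yet the law of $Z_1$ conditioned on $Z_2$ is essentially bimodal with modes at distance $R$, so its conditional observable diameter is $O(R)$, not $O(1)$. The only conditioning the paper's framework licenses is conditioning on an event of probability bounded below (Remark~\ref{rem:vecteur_conditionne}), which is a very different operation from conditioning on the value of a coordinate.

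This gap is not a technicality that a patch can absorb: without conditional concentration, the only unconditional control on a single swap is the pointwise bound $|\delta_i|\leq L_i\|Z_i-Z_i'\|_i$, and $\|Z_i-Z_i'\|_i$ concentrates around the \emph{metric} diameter of $Z_i$ (e.g.\ $\sqrt p$ for a standard Gaussian), not around $\sigma$ — so the telescoping route loses exactly the factor the theorem is about. The paper's proof takes a different path precisely to avoid this: it never isolates one coordinate, but instead restricts to a bounded event $\mathcal A_i$ on which \emph{all} the semi-norms $\|Z_j\|_j'$, $\|Z_j'\|_j'$ are truncated (at levels chosen adaptively as a function of $t$, which is what generates the $m$ regimes $t\in[t_i,t_{i+1}]$); on that event $f\circ\phi$ is globally Lipschitz in the full tuple $(Z,Z')$ for the product $\ell^\infty$ norm, so the joint concentration hypothesis applies directly via Lemma~\ref{lem:concentration_sous_ensemble} and Remark~\ref{rem:vecteur_conditionne}, while $\mathbb P(\mathcal A_i^c)$ is controlled by the concentration of the norms. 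If you want to keep a telescoping/moment-based proof you would have to add an independence hypothesis on the $Z_i$ (or some assumption on conditional laws), which would strictly weaken the theorem.
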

It is explained in Remark~\ref{rem:controle_des_moments} that, in the setting of Theorem~\ref{the:Concentration_produit_de_vecteurs_d_algebre_optimise}, the standard deviation (resp. the $r^{\text{th}} $ centered moment with $r \leq O(1)$) of any $1$-Lipschitz observation of $\phi(Z)$ is of order $O\left(\sigma\mu^{(m-1)}\right)$ (resp. $O\left((\sigma\mu^{(m-1)})^r\right)$) thus the observable diameter,  is given by the first exponential decay, $\mathcal E_{q}\left(\sigma\mu^{(m-1)}\right)$, which represent the guiding term of \eqref{eq:concentration_forme_m_lin}. 

Note that the multi-regime family $(\sigma^l \mu^{(m-l)}, \frac{q}{l})_{l\in [m]}$ is clearly thrifty thanks to Proposition~\ref{pro:identity_multiregime_q_s_i}.

The proof is simply a consequence of Theorem~\ref{the:concentration_concentrated_variations}.
  \begin{proof}
  We only prove the result for $\sigma = 1$ since it is easy to get back to this setting from a general cases (replacing $Z_i$ by $Z_i/\sigma$ and $\mu_i$ by $\mu_i\sigma$).
  Let us assume Theorem~\ref{the:Concentration_produit_de_vecteurs_d_algebre_optimise} up to $m=m_0-1$ and let us try to show its validity for $m=m_0$ thanks to Theorem~\ref{the:concentration_concentrated_variations}. We can show with the iteration hypothesis for $m=m_0-1$ that for all $i\in[m]$:
    \begin{align*}
      \|Z_1\|'_1\cdots \|Z_{i-1}\|'_{i-1} \|Z_{i+1}\|'_{i+1} \cdots \|Z_{m}\|'_{m} 
      &\in O(\mu_{-i}^{(m-1)}) + \sup_{l\in[m-1]}\mathcal E_l \left(\mu_{-i}^{(m-l-1)}\right),
    \end{align*}
    where $\mu_{-i} \equiv \mu_1\cdots \mu_{i-1} \mu_{i-1} \cdots \mu_{m}$. Now, since for all $k \in [m-1]$, $\mu^{(l)}_{-i} \leq O(\mu^{(l)})$, we retrieve the hypotheses of Theorem~\ref{the:concentration_concentrated_variations} with $\forall i \in[m]: \sigma_i \equiv \mu_{-i}^{(m-i)}$ and we can prove Theorem~\ref{the:Concentration_produit_de_vecteurs_d_algebre_optimise} for $m = m_0$.
  \end{proof}


Let us now provide the result of Adamczac and Wolff in \cite{adamczak2015concentration} to compare it with Theorem\ref{the:Concentration_produit_de_vecteurs_d_algebre_optimise}. It relies on some notations originally introduced by Latala in \cite{latala2006estimates}. Let us denote $P_m$, the sets of partition of $[m]$ into nonempty, pairwise disjoint sets. Given a tensor $A = (a_\alpha)_{\alpha \in [n]^m}$ and a partition under ordered sets\footnote{This ordering is not specified in \cite{adamczak2015concentration}, but otherwise, we do not see how to define properly the notation $\alpha_{J_m}$ for $\alpha \in [n]^m$ and $k \in [l]$.} $\mathcal J  = \{J_1,\ldots, J_l\}$, define:
\begin{align*}
  \|A\|_{\mathcal J} \equiv \sup \left\{ \sum_{\alpha \in [n]^m} a_\alpha \prod_{k=1}^l x_{\alpha_{J_k}}^{(k)} \ | \ \forall k \in[l] : x^{(k)} \in \mathbb R^{J_k}, \|x^{(k)}\|^2 \leq 1  \right\},
\end{align*}
where for any multiindex $\alpha \in [n]^m$, and any $k \in [l]$, we noted $\alpha_{J_k} = (\alpha_i)_{i \in J_k}$ and for any $x \in \mathbb R^{J_k}$, $\|x\| \equiv \sqrt{\sum_{\alpha \in J_k} x_\alpha^2}$.

\begin{theorem}[\cite{adamczak2015concentration}, Theorem 1.2]\label{the:Adamczac_wolff}
  Given a (sequence of) random vector $X \in \mathbb R^n$, satisfying for any mapping $h : \mathbb R^n \to \mathbb R$ and any $r\geq 2$:
  \begin{align}\label{eq:poincare_p_moment}
     \mathbb E \left[ \left\vert h(X) - \mathbb E[h(X)] \right\vert^r \right] \leq \sigma \sqrt r \mathbb E \left[ \left\vert \nabla h(X) \right\vert^r \right]
   \end{align} and a $\mathcal C^m$ mapping $f: \mathbb R^n \to \mathbb R$ such that $x \mapsto \restriction{d^mf}{x}$ is uniformly bounded on $\mathbb R^n$, we have the concentration:
  \begin{align*}
    f(X) \propto \max_{\mathcal J \in P_m} \left( \mathcal E_{\frac{2}{\#\mathcal J}} \left( \sigma^m \sup_{x\in \mathbb R^n} \|\restriction{d^mf}{x}\|_{\mathcal J} \right) \right) + \max_{\genfrac{}{}{0pt}{2}{l \in [m-1]}{\mathcal J \in P_l}} \left( \mathcal E_{\frac{2}{\#\mathcal J}} \left( \sigma^l \|\mathbb E[\restriction{d^lf}{X}]\|_{\mathcal J} \right) \right)
  \end{align*}
\end{theorem}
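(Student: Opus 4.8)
\textbf{Proof plan for Theorem~\ref{the:Adamczac_wolff}.} This statement is quoted from \cite{adamczak2015concentration}, so the task is to reconstruct the argument of Adamczak and Wolff rather than to derive it from the machinery developed earlier in this paper. The plan is to iterate a Poincar\'e-type moment inequality $m$ times, controlling at each step the $L^r$-norm of a gradient of a higher-order derivative, and then to convert the resulting moment bounds into the claimed superposition of $\mathcal E_{2/\#\mathcal J}$-type concentrations via the moment characterization of Proposition~\ref{pro:characterization_moments}.

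First, I would reformulate \eqref{eq:poincare_p_moment} as a one-step recursion: for any smooth $g$, centering and applying the hypothesis gives $\|g(X) - \mathbb E[g(X)]\|_r \leq \sigma \sqrt r \, \| \, |\nabla g(X)| \, \|_r$. Iterating this on $g = f$, then on each partial derivative of $f$, and so on, one obtains after $m$ steps a bound of the form
\begin{align*}
  \|f(X) - \mathbb E[f(X)]\|_r \leq \sum_{l=1}^{m} C_l \, \sigma^l \, r^{l/2} \, \bigl\| \, |d^l f(X)|_{\mathrm{HS}} \, \bigr\|_r ,
\end{align*}
where the $l=m$ term has $d^m f$ uniformly bounded (so the $\|\cdot\|_r$ is just a sup) and the lower-order terms $l<m$ come from the boundary of the recursion and carry factors $\mathbb E[d^l f(X)]$ once one more centering-and-Poincar\'e step is spent. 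The genuinely delicate point is that the naive Hilbert--Schmidt norm of $d^l f$ is not the right quantity: Latal\l{}a's chaos estimates show that the sharp growth of the $r$-th moment of a tensor contracted against Gaussian-like vectors is governed by $\max_{\mathcal J \in P_l}\|d^l f\|_{\mathcal J}$ with the exponent on $r$ depending on $\#\mathcal J$, not by a single power of $r$. So the recursion must be run with the family of partition norms $\|\cdot\|_{\mathcal J}$ tracked simultaneously, using the submultiplicativity and interpolation properties of these norms under the action of one more gradient; this is exactly the combinatorial bookkeeping that produces the index $2/\#\mathcal J$ in the final statement.

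Concretely the steps are: (i) establish the single-step moment recursion from \eqref{eq:poincare_p_moment}; (ii) prove the tensor-norm growth lemma --- for a (possibly random) $l$-tensor $T$, $\|T(X,\dots,X)\|_r \lesssim \sum_{\mathcal J \in P_l} r^{\#\mathcal J/2}\, \mathbb E\|T\|_{\mathcal J}$ plus, after one extra Poincar\'e step, $\sum_{\mathcal J}r^{\#\mathcal J /2}\|\mathbb E[T]\|_{\mathcal J}$ --- which is the core quantitative input and follows Latal\l{}a's decoupling/chaining argument; (iii) combine (i) and (ii) inductively on $m$, splitting at each level into the ``top'' term that feeds another derivative and the ``frozen'' term $\mathbb E[d^l f(X)]$ that is already in final form; (iv) read off from the resulting inequality, for each fixed $\mathcal J$, a moment bound of the shape $\mathbb E|f(X)-\mathbb E f(X)|^r \leq C (r)^{r\#\mathcal J/2}(c\,\sigma^l\|\cdots\|_{\mathcal J})^r$ and invoke Proposition~\ref{pro:characterization_moments} (with $q = 2/\#\mathcal J$) to turn it into $\mathcal E_{2/\#\mathcal J}$-concentration; (v) sum over $l \in [m]$ and over $\mathcal J \in P_l$, which since $m = O(1)$ is equivalent to taking the maximum and yields exactly the stated bound. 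The main obstacle is step (ii): getting the exponent of $r$ to depend on $\#\mathcal J$ rather than on $l$ requires the full strength of Latal\l{}a's tensor chaos estimate, and verifying that the partition-norm structure is preserved (up to constants) when one applies the gradient and passes from $d^lf$ to $d^{l+1}f$ is where all the real work lies; everything else is a clean induction.
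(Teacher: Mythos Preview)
The paper does not prove this theorem at all: it is merely quoted from \cite{adamczak2015concentration} to serve as a point of comparison with Theorem~\ref{the:Concentration_produit_de_vecteurs_d_algebre_optimise} (see Remark~\ref{rem:comparaison_with_wolff_adamzcac_result}), and no argument is given or sketched. So there is no ``paper's own proof'' to compare your proposal against.

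That said, your plan is a faithful outline of the Adamczak--Wolff strategy: iterate the moment-Poincar\'e inequality~\eqref{eq:poincare_p_moment} $m$ times, at each step splitting off a centered term (which feeds the next derivative) and an expectation term (which stays frozen), control the resulting tensor moments via Lata{\l}a's partition-norm estimates to get the correct power $r^{\#\mathcal J/2}$, and then convert moment growth into $\mathcal E_{2/\#\mathcal J}$ tails. You have correctly identified the only nontrivial ingredient as step~(ii), the tensor chaos bound; the rest is indeed bookkeeping. Since the present paper neither proves nor needs this result, your reconstruction is adequate for the purpose, though for a rigorous write-up you would have to either reproduce Lata{\l}a's argument in full or cite \cite{latala2006estimates} and \cite{adamczak2015concentration} directly for that step.
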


\begin{remark}[Comparison between Theorem~\ref{the:Concentration_produit_de_vecteurs_d_algebre_optimise} and Theorem~\ref{the:Adamczac_wolff}]\label{rem:comparaison_with_wolff_adamzcac_result}
The hypotheses of Theorem~\ref{the:Adamczac_wolff} implies that for any $1$-Lipschitz mapping $f: \mathbb R^p \to \mathbb R$:
\begin{align*}
   \forall r >0 : \mathbb{E}\left[\left\vert f(Z_p) - f(Z'_p)\right\vert^r\right] \leq \left(\frac{r}{2}\right)^{\frac{r}{2}}(\sqrt 2\sigma)^r,
 \end{align*} 
 which then allows us, thanks to Proposition~\ref{pro:characterization_moments}, to set $X \propto \mathcal E_2$. Therefore, the hypotheses look slightly weaker than those of Theorem~\ref{the:Concentration_produit_de_vecteurs_d_algebre_optimise}, but not so much because there is a way to connect their hypotheses to log-Sobolev inequalities as seen in \cite{adamczak2017moment, aida1994moment} which can be connected to our hypotheses (see \cite[Theorem 5.3.]{LED05}).

 To simplify the picture, one could observe that our result concerns only the first-order variations of the functionals on each variable, and manage the variability of the coefficient of variation (which in practice is some product of semi-norms of random vectors) with some truncation methods. One of the main limitations is then that one has to treat each random term appearing in the variation coefficient independently, thus producing an observable diameter depending on the mean of the norm of the variation term (and not the norm of the mean of the variations). The strength of \cite{adamczak2015concentration} is to postpone the computation of the norm to the higher order derivative of the functionals thanks to the iterative invocation of its hypothesis of a generalized Poincaré inequality (you can bound the differences of $f$ to its expectation with the expectation of the norm of the derivative, which you can then bound with the norm of its expectation plus the expectation of the norm of the second derivative thanks to the triangular inequality and the generalized Poincaré inequality). This approach becomes very powerful in the case of polynomial functionals, since a certain order of the derivative will vanish, the bound then being composed only of norms of expectations, allowing some cancellation of terms (our result only gives the same bound for the concentration of the monomial).
However, outside the special case of polynomials, their theorem requires the computation of many complex norms. Moreover, if no large order derivative cancels, they will still keep a term like $\sup_x||\restriction{d^m f}{x}||_{\mathcal J}$, without any hint of resolution if it is not bounded (imagine for instance a functional $f(X_1...X_n)$ with the derivatives of f not bounded). In these rather common cases, our approach seems to be the only relevant one.
  
\end{remark}

\begin{remark}[Regime decomposition]\label{rem:regime_concentration_multiple}
  Let us rewrite the concentration inequality \eqref{eq:concentration_forme_m_lin} to let appear the implicit parameter $t$. There exist two constants $C,c >0$ (in particular, $C,c\leq O(1)$) such that for any $1$-Lipschitz mapping $f : F \to \mathbb R$, for any $t>0$:
  \begin{align}\label{eq:regime_complet}
    \mathbb P \left(\left\vert f(\Phi(Z)) - \mathbb E[f(\Phi(Z))]\right\vert\geq t\right)
    \leq C\max_{l\in[m]}\exp \left(-\left(\frac{t/(c\sigma)^l}{\mu^{(m-l)}}\right)^{\frac{q}{l}}\right) ,
  \end{align}
  This expression displays $m$ regimes of concentration, depending on $l \in [m]$: the first one, $C e^{-c(t/ \sigma \mu^{(m-1)})^q}$ ($l= 1$), controls the probability for the small values of $t$, and the last one, $C e^{-ct^{q/m}/\sigma}$ ($l=m$), controls the tail. 
  Let us define:
  \begin{align*}
    t_1=0;&
     &\forall i\in[m]\setminus \{1\}: t_{i} \equiv \mu^{(m-i)} \mu_{(i)}^i 
     = \frac{(\mu^{(m-i+1)})^{i}}{(\mu^{(m-i)})^{i-1}};&
     &t_{m+1} = \infty.
   \end{align*} 
   Recalling that $\mu_{(1)} \leq \cdots \leq \mu_{(m)}$, we see that $t_{1} \leq  \cdots \leq t_{m}$. 
   One can then show that for any $i\in[m]$, we have the equivalence:
  \begin{align*}
     t \in [t_{i}, t_{i+1}]&
     &\Longleftrightarrow& 
    &\forall j \in[m]\setminus\{i\}: \exp \left(- \left(\frac{t/(c\sigma)^j}{\mu^{(m-j)}}\right) ^{\frac{q}{j}}\right) \leq \exp \left(- \left(\frac{t/ (c\sigma)^i}{\mu^{(m-i)}}\right) ^{\frac{q}{i}}\right).
  \end{align*}
  Now, if, for a given $i\in [m]$, $i\geq 2$, $\mu_{(i)} = \mu_{(i+1)}$, then $t_{i} = t_{i+1}$, therefore the term $C\exp \left(-(\frac{(t/(c\sigma)^{i}}{\mu^{(m-i)}})^{\frac{q}{i}} \right)$ can be removed from the expression of the concentration inequality since it never reaches the maximum.

  In particular, when $\mu_{(1)} = \cdots = \mu_{(m)} \equiv \mu_0$,\footnote{To be precise, it is sufficient to assume $\mu_{(2)} = \cdots = \mu_{(m)}$, since $\mu_{(1)}$ never appears in the definition of the $t_{i}$ for $i \in [m]$.} $\forall i\in [m]$, $t_{i} = \mu_0^m$. In this case, there are only two regimes and we can more simply write :
  \begin{align*}
    \Phi(Z) \propto \mathcal E_q(\sigma \mu_0^{m-1}) + \mathcal E_{\frac{q}{m}}(\sigma^m).
  \end{align*}
\end{remark}

\begin{corollary}\label{cor:expression_avec_indice_de_norme}
  In the setting of Theorem~\ref{the:Concentration_produit_de_vecteurs_d_algebre_optimise}, when $\forall i \in[m]$, $\mathbb \|E[Z_i]\|_i \leq O(\sigma \eta_{\|\cdot\|'}^{1/q})$, we have the simpler concentration:
  \begin{align*}
    \phi \left(Z\right) \propto \max_{l \in [m]} \mathcal E_{q/l}\left(\sigma^m \eta^{(m-l)}\right),
  \end{align*}
  where we denoted $\eta = \left(\eta_{\|\cdot\|_1'}^{1/q},\ldots,\eta_{\|\cdot\|_m'}^{1/q}\right)$.
\end{corollary}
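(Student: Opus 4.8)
The plan is to apply Theorem~\ref{the:Concentration_produit_de_vecteurs_d_algebre_optimise} with the parameters $\mu_i$ chosen of order $\sigma\,\eta_{\|\cdot\|_i'}^{1/q}$, and then to simplify the exponents $\sigma^l\mu^{(m-l)}$ appearing in \eqref{eq:concentration_forme_m_lin} in terms of the vector $\eta=(\eta_{\|\cdot\|_1'}^{1/q},\ldots,\eta_{\|\cdot\|_m'}^{1/q})$. So all that really has to be done is to produce one admissible family of $\mu_i$'s with $\mu_i \leq O(\sigma\,\eta_{\|\cdot\|_i'}^{1/q})$ and to carry out the substitution.

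First I would bound $\mathbb E[\|Z_i\|_i']$ for each $i\in[m]$. The coordinate projection $(z_1,\ldots,z_m)\mapsto z_i$ is $1$-Lipschitz from $(E,\|\cdot\|'_{\ell^\infty})$ to $(E_i,\|\cdot\|_i')$, so the hypothesis $Z\propto\mathcal E_q(\sigma)$ for $\|\cdot\|'_{\ell^\infty}$ together with Proposition~\ref{pro:stabilite_lipschitz_mappings} gives $Z_i\propto\mathcal E_q(\sigma)$ in $(E_i,\|\cdot\|_i')$, hence $Z_i\in\mathbb E[Z_i]\pm\mathcal E_q(\sigma)$ there by Lemma~\ref{lem:conc_lip_imply_conc_linear}. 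Proposition~\ref{pro:tao_conc_exp} (equivalently Remark~\ref{rem:concentration_norme_hypo_lipschitz}) then yields $\mathbb E[\|Z_i-\mathbb E[Z_i]\|_i']\leq O(\eta_{\|\cdot\|_i'}^{1/q}\sigma)$, and combining this with the triangle inequality and the standing hypothesis $\|\mathbb E[Z_i]\|_i'\leq O(\sigma\,\eta_{\|\cdot\|_i'}^{1/q})$ gives $\mathbb E[\|Z_i\|_i']\leq O(\sigma\,\eta_{\|\cdot\|_i'}^{1/q})$. Since the degrees satisfy $\eta_{\|\cdot\|_i'}^{1/q}\geq O(1)$ (indeed $\geq 1$ in every case of interest), one can pick a sequence $\mu_i$ of order $\sigma\,\eta_{\|\cdot\|_i'}^{1/q}$ meeting both requirements $\mu_i\geq\mathbb E[\|Z_i\|_i']$ and $\mu_i\geq O(\sigma)$ of Theorem~\ref{the:Concentration_produit_de_vecteurs_d_algebre_optimise}.

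Writing $\mu_i=C_i\,\sigma\,\eta_{\|\cdot\|_i'}^{1/q}$ with $C_i\leq O(1)$, the notation introduced just before Theorem~\ref{the:Concentration_produit_de_vecteurs_d_algebre_optimise} gives, for every $k\in[m]$, $\mu^{(k)}=\max_{1\leq l_1<\cdots<l_k\leq m}\mu_{l_1}\cdots\mu_{l_k}=O(\sigma^k\,\eta^{(k)})$, where $\eta^{(k)}$ is built from the vector $\eta$ in the same way and the product of the $C_{l_j}$'s is absorbed into the constant because $m\leq O(1)$. Substituting into \eqref{eq:concentration_forme_m_lin}, the $l$-th observable diameter becomes $\sigma^l\mu^{(m-l)}=O(\sigma^m\,\eta^{(m-l)})$, and since replacing an observable diameter by one of the same order leaves the concentration unchanged (Remark~\ref{rem:definiiton_of_the_order}), we obtain $\phi(Z)\propto\max_{l\in[m]}\mathcal E_{q/l}(\sigma^m\,\eta^{(m-l)})$, which is the claim. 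The only step that is not mere $O(\cdot)$ bookkeeping is the norm control of the second paragraph: since $\|\cdot\|_i'$ may be a genuine semi-norm, one must make sure that Proposition~\ref{pro:tao_conc_exp} and the degree $\eta_{\|\cdot\|_i'}$ of Definition~\ref{def:norm_degree} are available, i.e.\ that $\|\cdot\|_i'$ admits a representation of the form \eqref{eq:norm_egal_supremum} over a set of finite degree — which holds in reflexive spaces and for the semi-norms arising in the applications (cf.\ Example~\ref{exe:semi_norme_diag}). I expect this to be the only delicate point; the rest is routine.
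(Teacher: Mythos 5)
Your proposal is correct and follows exactly the paper's route: the paper's own (one-line) proof simply invokes Proposition~\ref{pro:tao_conc_exp} to choose $\mu_i = C'\sigma\,\eta_{\|\cdot\|_i'}^{1/q}$ and then substitutes into Theorem~\ref{the:Concentration_produit_de_vecteurs_d_algebre_optimise}. You have merely filled in the bookkeeping (the triangle-inequality bound on $\mathbb E[\|Z_i\|_i']$, the check that $\mu_i\geq O(\sigma)$, and the substitution $\sigma^l\mu^{(m-l)}=O(\sigma^m\eta^{(m-l)})$), all of which is implicit in the paper.
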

\begin{proof}
  Proposition~\ref{pro:tao_conc_exp} allows us to choose $\mu_i = C'\sigma \eta_{\|\cdot\|_i'}^{1/q}$ for some constant $C'>0$; we thus retrieve the result thanks to Theorem~\ref{the:Concentration_produit_de_vecteurs_d_algebre_optimise}.
\end{proof}

Let us give examples of ``multilineary Lipschitz mappings'' that would satisfy the hypotheses of Theorem~\ref{the:Concentration_produit_de_vecteurs_d_algebre_optimise}.
\begin{example}[Entry-wise product]\label{ex:concentration_odot}

    Letting $\odot$ be the entry-wise product in $\mathbb{R}^p$ defined as $[x \odot y ]_i = x_iy_i$ (it is the Hadamard product for matrices), $\phi : (\mathbb R^p)^m \ni (x_1,\ldots,x_m) \mapsto x_1\odot \cdots \odot x_m \in \mathbb R^p$ is multilinearly Lipschitz since we have for all $i\in[m]$:
    \begin{align*}
       \left\Vert x_1\odot\cdots \odot x_{i-1} \odot (x_i - x_i') \odot x_{i+1} \odot \cdots \odot x_m)\right\Vert \leq  \left(\prod_{\genfrac{}{}{0pt}{2}{j=1}{j\neq i}}^m \|x_j\|_\infty\right) \left\Vert x_i - x_i'\right\Vert,
     \end{align*} 
    for all vectors $x_1,\ldots, x_m,x_1',\ldots,x_m' \in \mathbb R^p$.
    As a practical case, if $(Z_1,\ldots, Z_m) \propto \mathcal E_2$ and $\forall i \in[m]$, $\|\mathbb E[Z_i]\|_\infty \leq O(1) $, then Corollary~\ref{cor:expression_avec_indice_de_norme} and Remark~\ref{rem:regime_concentration_multiple} imply:
    \begin{align*}
      Z_1 \odot \cdots \odot Z_m \propto \mathcal E_2 \left(\log(p)^{\frac{m-1}{2}}\right) + \mathcal E_{\frac{2}{m}}.
    \end{align*}
    It is explained in Remark~\ref{rem:controle_des_moments} that in this case, the observable diameter of $Z_1\odot \cdots \odot Z_m$ is provided by $\mathcal E_2 \left(\log(p)^{\frac{m-1}{2}}\right)$: as such, under this very common setting, the entry-wise product has almost no impact on the rate of concentration.
\end{example}
\begin{example}[Matrix product]\label{exe:concentration_matrix_product}
     The mapping $\phi :  (\mathcal M_{p})^q \ni (M_1,\ldots,M_q) \mapsto M_1 \cdots M_m \in \mathcal M_{p}$ is multilinearly Lipschitz since for all $i\in[m]$ :\footnote{One could have equivalently considered, for even $m\in \mathbb N$, the mapping $\phi : \mathcal M_{p,n}^m \to \mathcal M_{p,n}$ satisfying $\forall M_1,\ldots, M_m \in \mathcal M_{p,n}$, $\phi(M_1,\ldots, M_m) = M_1M_2^T \cdots M_{m-1}M_m^T$.} 
    \begin{align*}
       \left\Vert M_1\cdots  M_{i-1}  (M_i - M_i')  M_{i+1}  \cdots  M_m)\right\Vert_F \leq  \left(\prod_{\genfrac{}{}{0pt}{2}{j=1}{j\neq i}}^m \|M_j\|\right) \left\Vert M_i - M_i'\right\Vert_F,
     \end{align*} 
    for all matrices $M_1,\ldots, M_m,M_1',\ldots,M_m' \in \mathcal M_{p}$.
     Given $m$ random matrices $X_1,\ldots, X_m \in \mathcal M_{p}$ such that $(X_1,\ldots, X_m) \propto \mathcal E_2$ and $\forall i \in[m]$, $\|\mathbb E[X_i]\| \leq O(\sqrt p) $, Corollary~\ref{cor:expression_avec_indice_de_norme} implies: 
     \begin{align*}
      X_1\cdots X_m \propto \mathcal E_2 \left(p^{\frac{m-1}{2}} \right) + \mathcal E_{\frac{2}{m}}.
     \end{align*}
     In particular, for a ``data'' matrix\footnote{That is, a matrix whose columns contain vectors of ``data'', as per data science terminology.} $X=(x_1,\ldots, x_n) \in \mathcal M_{p,n}$ satisfying $X \propto \mathcal E_2$ and $\mathbb E[\|X\|] \leq O(\sqrt{p+n})$, the sample covariance matrix satisfies the concentration:
     \begin{align*}
       \frac{1}{n} XX^T \propto \mathcal E_2 \left(\frac{\sqrt{p+n}}{n} \right) + \mathcal E_1 \left(\frac{1}{n}\right),
     \end{align*}
     which provides an observable diameter of order $O(1/\sqrt n)$ when $p \leq O(n)$.
     
\end{example}

\section{Generalized Hanson-Wright theorems}
To give some more elaborate consequences of Theorems~\ref{the:concentration_concentrated_variations} and~\ref{the:Concentration_produit_de_vecteurs_d_algebre_optimise}, let us first provide a matricial version of the popular Hanson-Wright concentration inequality, \cite{HW71}.
\begin{proposition}[Hanson-Wright]\label{pro:concentration_lineaire_YAX}
 Given two random matrices $X,Y\in \mathcal M_{p,n}$, assume that $(X,Y) \propto \mathcal E_2$ and $\|\mathbb E[X]\|_F, \|\mathbb E[Y]\|_F \leq O(1)$ (as $n,p \to \infty$). Then, for any deterministic matrix $A \in \mathcal M_{p}$, we have the linear concentration (in $(\mathcal M_{p,n}, \| \cdot \|_F)$):
  \begin{align*}
     Y^TAX \in  \mathcal E_2\left(  \|A\|_F\right) + \mathcal E_1(\|A\|).
  \end{align*}
\end{proposition}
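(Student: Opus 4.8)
The plan is to view $Y^{T}AX$ as a bilinear (i.e.\ $m=2$) ``generalized product'' of $X$ and $Y$ and feed it into Theorem~\ref{the:Concentration_produit_de_vecteurs_d_algebre_optimise}, the only real subtlety being the choice of the auxiliary semi-norms $\|\cdot\|_1',\|\cdot\|_2'$ so that the variation coefficients sit at scale $\|A\|_F$ rather than at a crude deterministic scale. First I would normalise: the case $A=0$ is trivial, and since each observable diameter scales linearly in $A$, one may replace $A$ by $A/\|A\|$ and multiply the diameters back at the end, so assume $\|A\|=1$. Because $\|\cdot\|_F$ is self-dual, linear concentration of $Y^{T}AX$ amounts to showing that, \emph{uniformly over} $B$ with $\|B\|_F\le 1$, the real random variable
\[
  \phi_B(X,Y)\ \equiv\ \tr\!\big(BY^{T}AX\big)\ =\ \langle A^{T}YB^{T},X\rangle_F\ =\ \langle AXB,Y\rangle_F
\]
concentrates around $\mathbb E[\phi_B(X,Y)]$ as $\mathcal E_2(\|A\|_F)+\mathcal E_1(1)$, with constants not depending on $B$.

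I would then apply Theorem~\ref{the:Concentration_produit_de_vecteurs_d_algebre_optimise} with $m=2$, $(E_1,\|\cdot\|_1)=(E_2,\|\cdot\|_2)=(\mathcal M_{p,n},\|\cdot\|_F)$, $(F,\|\cdot\|)=(\mathbb R,|\cdot|)$, $Z_1=X$, $Z_2=Y$, $\phi=\phi_B$, and the $A,B$-dependent semi-norms $\|X\|_1'\equiv\|AXB\|_F$, $\|Y\|_2'\equiv\|A^{T}YB^{T}\|_F$. The two ways of writing $\phi_B$ together with Cauchy--Schwarz give at once, for all matrices,
\[
  |\phi_B(X,Y)-\phi_B(X',Y)|\le\|A^{T}YB^{T}\|_F\|X-X'\|_F,\qquad |\phi_B(X,Y)-\phi_B(X,Y')|\le\|AXB\|_F\|Y-Y'\|_F,
\]
which is exactly the ``multilinearly $2$-Lipschitz'' inequality since $\tfrac{\max(\|X\|_1',\mu_1)\max(\|Y\|_2',\mu_2)}{\max(\|X\|_1',\mu_1)}=\max(\|Y\|_2',\mu_2)\ge\|Y\|_2'$ and symmetrically for the other variable. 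The hypothesis $(X,Y)\propto\mathcal E_2$ gives the concentration of $(Z_1,Z_2)$ for $\|\cdot\|_{\ell^\infty}$ with $\sigma=O(1)$; and because $\|A\|=1$ and $\|B\|\le\|B\|_F\le1$, the maps $X\mapsto\|AXB\|_F$ and $Y\mapsto\|A^{T}YB^{T}\|_F$ are $1$-Lipschitz for $\|\cdot\|_F$, hence $\|X\|_1'$ and $\|Y\|_2'$ are $\mathcal E_2(\sigma)$-concentrated observations of $X,Y$, so $\|Z_i\|_i'\in\mu_i\pm\mathcal E_2(\sigma)$ for any $\mu_i$ that overshoots $\mathbb E\|Z_i\|_i'$ by at most $O(\sigma)$ (Lemma~\ref{lem:diametre_observable_pivot}).

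The one quantitative point is the size of $\mu_1,\mu_2$. Writing $\|AXB\|_F^2=\mathrm{vec}(X)^{T}\big((BB^{T})\otimes(A^{T}A)\big)\mathrm{vec}(X)$ and using that $X\propto\mathcal E_2$ with $\|\mathbb E[X]\|_F\le O(1)$ forces $\big\|\mathbb E[\mathrm{vec}(X)\mathrm{vec}(X)^{T}]\big\|\le O(1)$ (Proposition~\ref{pro:carcaterisation_vecteur_linearirement_concentre_avec_moments} and the remark after it), one gets $\mathbb E\|AXB\|_F^{2}\le O(1)\,\tr\!\big((BB^{T})\otimes(A^{T}A)\big)=O(\|B\|_F^{2}\|A\|_F^{2})\le O(\|A\|_F^{2})$, whence $\mathbb E\|X\|_1'\le O(\|A\|_F)$ by Jensen, and likewise $\mathbb E\|Y\|_2'\le O(\|A\|_F)$; one may thus take $\mu_1=\mu_2$ of order $\|A\|_F$, which in particular satisfies $\mu_i\ge O(\sigma)$ since $\|A\|_F\ge\|A\|=1$ and $\sigma=O(1)$. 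Theorem~\ref{the:Concentration_produit_de_vecteurs_d_algebre_optimise} with $m=2$, $q=2$, $\sigma=O(1)$, $\mu^{(1)}=\max(\mu_1,\mu_2)=O(\|A\|_F)$ and $\mu^{(0)}=1$ then gives $\phi_B(X,Y)\propto\mathcal E_2(\sigma\mu^{(1)})+\mathcal E_1(\sigma^{2})=\mathcal E_2(\|A\|_F)+\mathcal E_1(1)$, with constants independent of $B$; undoing the normalisation (multiplying diameters by $\|A\|$) and taking the supremum over $\|B\|_F\le1$ yields the announced $Y^{T}AX\in\mathbb E[Y^{T}AX]\pm\mathcal E_2(\|A\|_F)+\mathcal E_1(\|A\|)$.

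I expect the main obstacle to be precisely the selection of $\|\cdot\|_1',\|\cdot\|_2'$: a naive bound such as $|\tr(BY^{T}A\Delta X)|\le\|A\|\,\|Y\|\,\|\Delta X\|_F$ would produce a variation coefficient of deterministic order $\|A\|\sqrt{p+n}$ and hence only the far weaker regime $\mathcal E_2(\|A\|\sqrt{p+n})$. The gain is to keep the variation coefficient as the \emph{random} quantity $\|A^{T}YB^{T}\|_F$, which the covariance estimate of the previous paragraph pins to the scale $\|A\|_F$ --- exactly the mechanism Theorem~\ref{the:Concentration_produit_de_vecteurs_d_algebre_optimise} is built to exploit; and the preliminary normalisation $\|A\|=1$ is what keeps the ambient $\sigma$ at $O(1)$ so that this random coefficient is itself a $1$-Lipschitz observation of $Y$ (resp.\ $X$).
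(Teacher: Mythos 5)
Your proposal is correct and follows essentially the same route as the paper: the paper also normalises $\|A\|\le 1$, introduces the $A,B$-dependent semi-norm $M\mapsto\|AMB\|_F$ to control the variations of $\tr(BY^TAX)$ in each variable, bounds its expectation at scale $O(\|A\|_F)$ via the covariance estimate of Lemma~\ref{lem:borne_Ax}, and concludes with Theorem~\ref{the:Concentration_produit_de_vecteurs_d_algebre_optimise} for $m=2$. Your write-up is in fact slightly more careful on one point (distinguishing $\|A^TYB^T\|_F$ from $\|AXB\|_F$ for the two variables, where the paper uses a single notation $\|\cdot\|_{A,B}$).
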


This proposition, which provides a result in terms of linear concentration, points out an instability of the class of Lipschitz concentrated vectors which (here through products) degenerates into a mere linear concentration. This phenomenon fully justifies the introduction of the notion of linear concentration: it will occur again in Proposition~\ref{pro:concentration_lineaire_XDY} and Lemma~\ref{lem:concentration_Q_m_i_x_i}.
We present the proof directly here as it is a short and convincing application of Theorem~\ref{the:Concentration_produit_de_vecteurs_d_algebre_optimise}.
\begin{proof}
Considering a deterministic matrix $B \in \mathcal M_{n}$ such that $\|B\|_F \leq 1$ we introduce the semi-norm $\|\cdot \|_{A,B}$ defined on $\mathcal M_{p,n}$  and satisfying for all $M \in \mathcal M_{p,n}$, $ \|M\|_{A,B} \equiv \|AMB\|_F$. 
Note that for any $M, P \in \mathcal M_{p,n}$:
\begin{align*}
\tr(BMAP^T)
   \leq \left\{\begin{aligned}
    &\|M\|_{A,B}\|P\|_F\\
    &\|M\|_F\|P\|_{A,B}.
  \end{aligned} \right.
\end{align*}
Thanks to Lemma~\ref{lem:borne_Ax}, $\mathbb E[\|X\|_{A,B}], \mathbb E[\|Y\|_{A,B}] \leq O(\|A\|_F)$, therefore:
\begin{align*}
  \sup(\|X\|_{A,B}, \|Y\|_{A,B}) \in O(\|A\|_F) \pm \mathcal E_2(\|A\|).
\end{align*}
 Therefore, the hypotheses of Theorem~\ref{the:concentration_concentrated_variations} are satisfied and we can deduce the result.

\end{proof}
\begin{remark}\label{rem:convex_concentration_hypothesis}
  For the reader information, we mention that in \cite{ADA14}, the concentration is even expressed on the random variable $\sup_{A \in \mathcal A} X^TAX$ where $\mathcal A$ is a bounded set of matrices (and $X \in \mathbb R^p$).

  In \cite{VU14} and \cite{ADA14}, the result is even obtained assuming convex concentration for $X=Y \in \mathbb R^p$, i.e., the inequalities of Definition~\ref{def:concentrated_sequence} are satisfied for all $1$-Lipschitz and convex functionals.
  This definition is less constrained, thus the class of convexly concentrated random vector is larger\footnote{It is even strictly larger as it was shown in \cite{TAL88} that the uniform distribution on $\{0, 1\}^p$ is convexly concentrated but not Lipschitz concentrated (with interesting concentration speed).} than the class of Lipschitz concentrated random vectors. A well-known theorem of \cite{TAL95} provides the concentration of the Lipschitz \textit{and convex} observations of any random vector $X$ built as an affine transformation of a random vector with bounded (with respect to $p$) and independent entries.

  These looser hypotheses are not very hard to handle in this particular case of quadratic functionals since these observations exhibit convex properties. The main issue is to find a result analogous to Lemma~\ref{lem:concentration_sous_ensemble} to show the convex concentration of $X^TAX$ on events $\{\|X\|\leq K\}$ for $K >0$ (note that these events are associated to convex subsets of $\mathbb R^p$). These details go beyond the scope of the article: we have shown in the ongoing work \cite{louart2022sharp} that Theorem~\ref{the:Concentration_produit_de_vecteurs_d_algebre_optimise} can extend to entry-wise products of $m$ convexly random vectors and to matrix products of $m$ convexly concentrated random matrices (for the latter operations, the concentration is not as good as in the Lipschitz case).
\end{remark}
Let us end this section with a useful consequence of Proposition~\ref{pro:concentration_lineaire_YAX}.
\begin{corollary}\label{cor:borne_norme_d_XAY}
  Given a deterministic matrix $A\in \mathcal M_{p}$ satisfying $\|A\|_F\leq 1$ and two random matrices $X=(x_1,\ldots,x_n),Y=(y_1,\ldots,y_n) \in \mathcal M_{p,n}$ satisfying $(X,Y) \propto \mathcal E_2$ and $\sup_{i\in[n]} \|\mathbb E[x_i]\|, \|\mathbb E[y_i]\| \leq O(1)$ such that  we have the concentration:
  \begin{align*}
    \left\Vert Y^TAX\right\Vert_d \in \mathcal E_2 \left(\sqrt{\log(np)}\right) + \mathcal E_1.
  \end{align*} 

  If\footnote{Actually, to bound the expectation we just need the concentration of each of the couples $(x_i,y_i)$ but not of the matrix couple $(X,Y)$. Note that the concentration of each the $x_1,\ldots, x_n$ does not imply the concentration of the whole matrix $X$, even if the columns are independent. to trackle this issue, some authors \cite{PAJ09} require a logconcave distribution for all the columns because the product of logconcave distribution is also logconcave. However our assumptions are more general because they allow to take for $x_i$ any $O(1)$-Lipschitz transformation of a Gaussian vector which represents a far larger class of random vectors.}, in addition, $\|A\|_* \leq O(1)$ or $\sup_{i\in [n]} \|\mathbb E[y_ix_i^T]\|_ F\leq O(1)$\footnote{To be precise, one just needs $\sup_{i\in [n]} |\mathbb E[x_i^TAy_i]|\leq O(1)$.},
  then $\mathbb E \left[\left\Vert X^TAY\right\Vert_d\right] \leq  O(\sqrt n)$. 
\end{corollary}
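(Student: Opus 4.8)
The plan is to derive the concentration of $\|Y^TAX\|_d$ directly from Theorem~\ref{the:Concentration_produit_de_vecteurs_d_algebre_optimise}, applied to the bilinear map $\phi(X,Y)\equiv\big(y_i^TAx_i\big)_{i\in[n]}\in(\mathbb R^n,\|\cdot\|)$, which satisfies $\|\phi(X,Y)\|=\|Y^TAX\|_d$. I take $m=2$, $q=2$, base norms $\|\cdot\|_1=\|\cdot\|_2=\|\cdot\|_F$ on $\mathcal M_{p,n}$, and --- the decisive choice --- the column-max semi-norms $\|M\|_1'\equiv\max_{i\in[n]}\|Am_i\|$ and $\|M\|_2'\equiv\max_{i\in[n]}\|A^Tm_i\|$ for $M=(m_1,\dots,m_n)$. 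Since $\|A\|_F\le1$ we also have $\|A\|\le1$, so no rescaling is needed. A column-wise Cauchy--Schwarz gives
\begin{align*}
  \|\phi(X,Y)-\phi(X',Y)\|=\Big(\sum_{i=1}^n\big(y_i^TA(x_i-x_i')\big)^2\Big)^{1/2}\le\max_{i\in[n]}\|A^Ty_i\|\;\|X-X'\|_F=\|Y\|_2'\,\|X-X'\|_F,
\end{align*}
and symmetrically $\|\phi(X,Y)-\phi(X,Y')\|\le\|X\|_1'\,\|Y-Y'\|_F$; these are exactly the multilinearly-Lipschitz variation bounds demanded by Theorem~\ref{the:Concentration_produit_de_vecteurs_d_algebre_optimise} (for $i=1$ and $i=2$).

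It remains to verify the concentration of the semi-norms and to estimate the $\mu_i$. The map $X\mapsto(\|Ax_i\|)_{i\in[n]}$ being $\|A\|$-Lipschitz from $(\mathcal M_{p,n},\|\cdot\|_F)$ to $(\mathbb R^n,\|\cdot\|_\infty)$, the observation $\|X\|_1'$ concentrates as $\mathcal E_2$ --- the weaker requirement of the footnote of Theorem~\ref{the:Concentration_produit_de_vecteurs_d_algebre_optimise} --- and likewise $\|Y\|_2'$. For the amplitude, $x_i\propto\mathcal E_2$ together with $\|\mathbb E[x_i]\|\le O(1)$ yields $x_i\in O(1)\pm\mathcal E_2$, hence $\|\mathbb E[x_ix_i^T]\|\le O(1)$ by Proposition~\ref{pro:carcaterisation_vecteur_linearirement_concentre_avec_moments}, so that, uniformly in $i$,
\begin{align*}
  \mathbb E\big[\|Ax_i\|\big]^2\le\mathbb E\big[\|Ax_i\|^2\big]=\tr\!\big(A^TA\,\mathbb E[x_ix_i^T]\big)\le\|\mathbb E[x_ix_i^T]\|\,\|A\|_F^2\le O(1).
\end{align*}
Since $\eta_{(\mathbb R^n,\|\cdot\|_\infty)}=\log n$ (Example~\ref{exe:norm_degree}), Proposition~\ref{pro:tao_conc_exp} and Remark~\ref{rem:concentration_norme_hypo_lipschitz} give $\mathbb E[\|X\|_1']\le\max_i\mathbb E[\|Ax_i\|]+O(\|A\|\sqrt{\log n})\le O(\sqrt{\log(np)})$, and the same for $\mathbb E[\|Y\|_2']$; one may therefore take $\mu_1,\mu_2\le O(\sqrt{\log(np)})$ (capped below by $O(\sigma)$ to meet the last hypothesis, at no cost). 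With $m=2$ only the two regimes $l=1,2$ survive (cf.\ Remark~\ref{rem:regime_concentration_multiple}) and Theorem~\ref{the:Concentration_produit_de_vecteurs_d_algebre_optimise} yields $\phi(X,Y)\propto\mathcal E_2(\sigma\max(\mu_1,\mu_2))+\mathcal E_1(\sigma^2)=\mathcal E_2(\sqrt{\log(np)})+\mathcal E_1$; since $\|Y^TAX\|_d=\|\phi(X,Y)\|$ is a $1$-Lipschitz observation, it inherits this concentration.

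For the expectation bound, only column-wise information is needed. Jensen's inequality gives $\mathbb E[\|X^TAY\|_d]\le\big(\sum_{i=1}^n\mathbb E[(x_i^TAy_i)^2]\big)^{1/2}$. For each $i$, $(x_i,y_i)\propto\mathcal E_2$ with $O(1)$-bounded means, so Proposition~\ref{pro:concentration_lineaire_YAX} applied with $n=1$ gives $x_i^TAy_i\in\mathbb E[x_i^TAy_i]\pm\big(\mathcal E_2(\|A\|_F)+\mathcal E_1(\|A\|)\big)$, hence $\mathrm{Var}(x_i^TAy_i)\le O(\|A\|_F^2+\|A\|^2)\le O(1)$ and $\mathbb E[(x_i^TAy_i)^2]\le O(1)+\big(\mathbb E[x_i^TAy_i]\big)^2$. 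Finally $\mathbb E[x_i^TAy_i]=\tr(A\,\mathbb E[y_ix_i^T])$ is $O(1)$ under each of the three extra hypotheses: either $|\tr(A\,\mathbb E[y_ix_i^T])|\le\|A\|_*\,\|\mathbb E[y_ix_i^T]\|$ with $\|\mathbb E[y_ix_i^T]\|\le\|\mathbb E[y_iy_i^T]\|^{1/2}\|\mathbb E[x_ix_i^T]\|^{1/2}\le O(1)$, or $|\tr(A\,\mathbb E[y_ix_i^T])|\le\|A\|_F\,\|\mathbb E[y_ix_i^T]\|_F\le O(1)$, or $|\mathbb E[x_i^TAy_i]|\le O(1)$ is assumed directly. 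Summing the $n$ terms gives $\mathbb E[\|X^TAY\|_d]\le O(\sqrt n)$.

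The main obstacle is the two dimension-free estimates $\mathbb E[\|Ax_i\|]\le O(1)$ and $(\mathbb E[x_i^TAy_i])^2\le O(1)$: these are precisely what keep the observable diameter logarithmic in $p$ (rather than a power of $p$), and they rest on bounds of the type $\tr(A^TA\,\mathbb E[x_ix_i^T])\le\|A\|_F^2\|\mathbb E[x_ix_i^T]\|$ together with the operator-norm control of the second-order moments. Hand in hand with this is the choice of the column-max semi-norms $\|\cdot\|_1',\|\cdot\|_2'$ --- a cruder semi-norm (e.g.\ the operator norm of $A^TM$, or of $M$ itself) would only produce a $\sqrt{p+n}$ amplitude --- which is what makes $\phi$ multilinearly Lipschitz with these small $\mu_i$; the remaining checks (that $\|\cdot\|_1',\|\cdot\|_2'$ are genuine semi-norms, and that Theorem~\ref{the:Concentration_produit_de_vecteurs_d_algebre_optimise} only needs their marginal concentration) are routine.
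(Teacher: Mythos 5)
Your proof is correct, and the expectation half is essentially identical to the paper's: write $\|X^TAY\|_d^2=\sum_i(x_i^TAy_i)^2$, get an $O(1)$ variance for each $x_i^TAy_i$ from Proposition~\ref{pro:concentration_lineaire_YAX}, and bound $|\mathbb E[x_i^TAy_i]|=|\tr(A\,\mathbb E[y_ix_i^T])|$ by $O(1)$ under each of the extra hypotheses. The concentration half follows the same overall strategy (invoke Theorem~\ref{the:Concentration_produit_de_vecteurs_d_algebre_optimise} with semi-norms whose expectations are only logarithmic) but with a different concrete choice: the paper takes the SVD $A=U^T\Lambda V$, sets $\check X=VX$, $\check Y=UY$, uses $\|\check Y^T\Lambda\check X\|_d\le\|\check X\|_F\,\|\check Y\|_\infty$ and controls the \emph{entrywise} max $\|\check Y\|_\infty$, of expected order $\sqrt{\log(np)}$; you instead avoid the SVD entirely and use the \emph{column-max} semi-norms $\max_i\|A^Tm_i\|$, whose variation bound is a direct column-wise Cauchy--Schwarz and whose expectation is controlled by $\tr(AA^T\,\mathbb E[y_iy_i^T])\le\|\mathbb E[y_iy_i^T]\|\,\|A\|_F^2\le O(1)$ plus a union bound over only $n$ (rather than $np$) linear forms. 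Your route is marginally cleaner -- it yields $\mu_i\le O(\sqrt{\log n})$ rather than $O(\sqrt{\log(np)})$, though both are absorbed into the stated $\mathcal E_2(\sqrt{\log(np)})$ -- and it makes explicit the appeal to the footnote of Theorem~\ref{the:Concentration_produit_de_vecteurs_d_algebre_optimise} that only the marginal concentration $\|Z_i\|_i'\in\mu_i\pm\mathcal E_q(\sigma)$ is needed, which the paper leaves implicit.
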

\begin{remark}\label{rem:E_x_i_y_i_F_borne}
  Recall from Proposition~\ref{pro:carcaterisation_vecteur_linearirement_concentre_avec_moments} that if $(x_i,y_i) \in \mathcal E_2$ and $\|\mathbb E[(x_i,y_i)]\|_F \leq O(1)$ (as under the hypotheses of Corollary~\ref{cor:borne_norme_d_XAY}), $\|\mathbb E[y_ix_i^T]\|\leq O(1)$ and therefore, $\|\mathbb E[y_ix_i^T]\|_F \leq \sqrt p \|\mathbb E[y_ix_i^T]\|\leq O(\sqrt p)$. In particular, when $x_i$ is independent with $y_i$ (and $\|\mathbb E[x_i]\|, \|\mathbb E[y_i]\| \leq O(1)$), we can bound $\|\mathbb E[y_ix_i^T]\|_F \leq \|\mathbb E[y_i]\|\|\mathbb E[x_i]\|\leq O(1)$.
\end{remark}
\begin{proof}[Proof of Corollary~\ref{cor:borne_norme_d_XAY}]
Decomposing $A = U^T \Lambda V$ with $\Lambda= \diag(\lambda) \in \mathcal D_n$ and $U,V \in \mathcal O_{p}$, noting $\check X \equiv VX$ and $\check Y \equiv UY$ we have the identity:
  \begin{align*}
    \|Y^TAX\|_d \leq \sup_{\genfrac{}{}{0pt}{2}{D \in \mathcal D_n}{\|D\|_F\leq 1}} \tr(D\check Y^T \Lambda \check X) \leq \sup_{\|d\|\leq 1} d^T (\check X \odot \check Y) \lambda \leq \|\check X \odot \check Y\|\leq   \|\check X\|_F \|\check Y \|_\infty,
  \end{align*}
  and the same way, $\|Y^TAX\|_d \leq \|\check X \|_\infty \|\check Y\|_F$. Now we can bound thanks to Proposition~\ref{pro:tao_conc_exp}:
  \begin{align*}
     \mathbb E[\|\check X\|_\infty] \leq \left\Vert \mathbb E[\check X]\right\Vert_\infty +  O (\sqrt{\log(pn)}) \leq  \left\Vert \mathbb E[ X]\right\Vert +  O (\sqrt{\log(pn)}) \leq  O (\sqrt{\log(pn)}),
   \end{align*} 
   and the same holds for $\mathbb E[\|\check Y\|_\infty]$.
   Therefore, applying Theorem~\ref{the:concentration_concentrated_variations} with the concentrations $\check X, \check Y \propto \mathcal E_2$ and the variations concentrations $\|\check X\|_\infty, \|\check Y\|_\infty \in O(\sqrt{\log(pn)}) \pm \mathcal E_2$, we obtain the looked for concentration.

  To bound the expectation, we start with the identity $\left\Vert X^TAY\right\Vert_d = \sqrt{\sum_{i=1}^n (x_i^T Ay_i)^2}$, and we note that 
   the hypotheses of Proposition~\ref{pro:concentration_lineaire_YAX} are satisfied and therefore $x_i^TAy_i \in \mathcal E_1$. Now, if $\|A\|_* \leq 1$, we can bound:
  \begin{align*}
    \left\vert \mathbb E[x_i^TAy_i]\right\vert = \left\Vert \mathbb E[y_ix_i^T]\right\Vert \|A\|_* \leq O(1),
  \end{align*}
  thanks to Proposition~\ref{pro:carcaterisation_vecteur_linearirement_concentre_avec_moments} ($(x_i,y_i) \propto \mathcal E_2$ and $\|\mathbb E[x_i]\mathbb E[y_i]^T\|\leq O(1)$). The same bound is true when $\left\Vert \mathbb E[y_ix_i^T]\right\Vert_F \leq O(1)$ because $\left\vert \mathbb E[x_i^TAy_i]\right\vert \leq \left\Vert \mathbb E[y_ix_i^T]\right\Vert_F \|A\|_F$. As a consequence, $x_i^TAy_i \in O(1) \pm \mathcal E_1$ (with the same concentration constants for all $i\in[n]$), and we can bound:
  \begin{align*}
    \mathbb E \left[\left\Vert X^TAY\right\Vert_d \right] \leq \sqrt{ \sum_{i=1}^n \mathbb  E \left[ (x_i^T Ay_i)^2\right]} \leq O(\sqrt n).
  \end{align*}

\end{proof}

Let us now give an example of application of Theorem~\ref{the:Concentration_produit_de_vecteurs_d_algebre_optimise} when $m \geq 3$.

\section{Concentration of $XDX^T$}\label{sec:XDY}
Considering three random matrices $X,Y \in \mathcal M_{p,n}$ and $D \in \mathcal D_n$ such that $(X,Y,D) \propto \mathcal E_2$ and $\|\mathbb E[D]\|,\|\mathbb E[X]\|_F, \|\mathbb E[Y]\|_F \leq O(1)$ we wish to study the concentration of $XDX^T$. Theorem~\ref{the:Concentration_produit_de_vecteurs_d_algebre_optimise} just allows us to obtain the concentration $XDY^T \propto \mathcal E_2(n) + \mathcal E_1(\sqrt n) + \mathcal E_{2/3}$ since we cannot get a better bound than $\|XDY^T \|_F \leq \|X\|\|D\|_F\|Y\|$. 
However, considering some particular observations on $XDY^T$, it appears that the observable diameter can be smaller than $n$. 
Next Propositions reveal indeed that for any deterministic $u\in \mathbb R^p$ and $A \in \mathcal M_p$:
\begin{enumerate}
  \item $XDY^T u $ is Lipschitz concentrated with an observable diameter of order $O(\|u\|\sqrt{(n+p)\log(n)})$
  \item $\tr(A XDY^T)$ is concentrated with a standard deviation of order $O(\|A\|_F\sqrt{(n+p)\log(np)})$ if $\sup_{i\in [n]} \|\mathbb E[x_iy_i^T] \|_F \leq O(1)$ and $O(\|A\|_*\sqrt{(n+p)\log(np)})$ otherwise.
\end{enumerate}

\begin{proposition}\label{pro:Concentration_XDYu}
  Given three random matrices $X,Y= (y_1,\ldots, y_n) \in \mathcal M_{p,n}$ and $D \in \mathcal D_n$ diagonal such that $(X,Y, D)\propto \mathcal E_2 $, $\|\mathbb E[X]\| \leq O(\sqrt {p+n})$ and $\|\mathbb E[D]\|,\sup_{i\in[n]}\|\mathbb E[y_i]\| \leq O(1)$, for any deterministic vector $u \in \mathbb R^p$ such that $\|u \|\leq 1$: 
    $$X D Y^Tu \propto \mathcal E_2 \left(\sqrt{(p+n)\log (np)}  \right) + \mathcal E_{1} \left(\sqrt{p+n}  \right) +\mathcal E_{2/3}\ \ \text{in} \ \ (\mathbb R^{p},\|\cdot\|).$$ 
\end{proposition}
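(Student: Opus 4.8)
The plan is to recognize $\phi:(X,D,Y)\mapsto XDY^Tu$ as a generalized product of the $m=3$ random objects $X\in\mathcal M_{p,n}$, $D\in\mathcal D_n$, $Y\in\mathcal M_{p,n}$, and to feed it to Theorem~\ref{the:Concentration_produit_de_vecteurs_d_algebre_optimise} — more precisely to its variant Offshot~\ref{off:plus_de_norme_2}, since the variation of $\phi$ in $X$ is naturally bounded by a \emph{concentrated random variable} built on $(D,Y)$ rather than by a plain product of semi-norms. The decisive choices are the quantities used to measure the ``size'' of each variable: the operator norm $\|X\|$ for $X$, the operator norm $\|D\|=\max_{i\in[n]}|D_{ii}|$ for $D$, and $\|Y^Tu\|_\infty=\max_{i\in[n]}|y_i^Tu|$ for $Y$; and, crucially, bounding the $X$-variation directly by $\|DY^Tu\|$ instead of by the much larger (on average) product $\|D\|\,\|Y^Tu\|$.

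First I would check the multilinear-Lipschitz inequalities, which all follow from submultiplicativity of $\|\cdot\|$ together with $\|Dv\|\le\|D\|\,\|v\|$ and $\|Dv\|\le\|v\|_\infty\|D\|_F$: for arbitrary primed/unprimed arguments,
\begin{align*}
  \|(X-X')DY^Tu\| &\le \|DY^Tu\|\,\|X-X'\|_F, \\
  \|X(D-D')Y^Tu\| &\le \|X\|\,\|Y^Tu\|_\infty\,\|D-D'\|_F, \\
  \|XD(Y-Y')^Tu\| &\le \|X\|\,\|D\|\,\|Y-Y'\|_F,
\end{align*}
where one also uses $\|(Y-Y')^Tu\|\le\|Y-Y'\|_F$ and $\|(D-D')Y^Tu\|\le\|D-D'\|_F\,\|Y^Tu\|_\infty$, all for $\|u\|\le1$. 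The hypothesis $(X,Y,D)\propto\mathcal E_2$ transfers to each of $\|X\|$, $\|D\|$, $\|Y^Tu\|_\infty$ and $\|DY^Tu\|$ (each is a $1$-Lipschitz observation of the relevant variable), so the joint concentrations required by the theorem hold. Note that only the $X$-variation needs the concentrated-variable extension; the $D$- and $Y$-variations already fit the plain statement.

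Next I would estimate the expectations of the size quantities via Proposition~\ref{pro:tao_conc_exp} and the norm degrees of Examples~\ref{exe:norm_degree} and~\ref{exe:semi_norme_diag}. Since $\eta(\mathcal M_{p,n},\|\cdot\|)=n+p$, one gets $\mathbb E\|X\|\le\|\mathbb E X\|+O(\sqrt{p+n})=O(\sqrt{p+n})$; since the maximum of $n$ diagonal entries has degree $\log n$, $\mathbb E\|D\|\le\|\mathbb E D\|+O(\sqrt{\log n})=O(\sqrt{\log n})$; and likewise $\mathbb E\|Y^Tu\|_\infty\le\sup_i\|\mathbb E y_i\|+O(\sqrt{\log n})=O(\sqrt{\log n})$. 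The remaining quantity $\mathbb E\|DY^Tu\|$, the coefficient of the $X$-variation, is itself a generalized product of $D$ and $Y$; using $\|DY^Tu\|^2=\sum_i D_{ii}^2(y_i^Tu)^2$, the bound $u^T\mathbb E[y_iy_i^T]u\le O(\log n)$ coming from Proposition~\ref{pro:carcaterisation_vecteur_linearirement_concentre_avec_moments} applied to the columns $y_i$, and the control of $\mathbb E[D_{ii}^2]$ together with the smallness of $\mathbb E D$, one aims at $\mathbb E\|DY^Tu\|\le O(\sqrt{n\log n})\le O(\sqrt{(p+n)\log n})$.

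Finally, with the three sizes $\mu_X=O(\sqrt{p+n})$ and $\mu_D,\mu_Y=O(\sqrt{\log n})$ (and $\sigma=O(1)$), Theorem~\ref{the:Concentration_produit_de_vecteurs_d_algebre_optimise} read through the regime decomposition of Remark~\ref{rem:regime_concentration_multiple} yields exactly three exponential regimes: the leading $\mathcal E_2$ with parameter the product of the two largest sizes, $O(\sqrt{(p+n)\log n})$; the intermediate $\mathcal E_1$ with parameter the largest single size, $O(\sqrt{p+n})$; and the tail $\mathcal E_{2/3}$ with parameter $O(1)$ — i.e. the announced concentration. I expect the main obstacle to be precisely the estimate of $\mathbb E\|DY^Tu\|$: the crude bound $\mathbb E\|D\|\cdot\mathbb E\|Y^Tu\|$ (or passing through $\|DY^Tu\|_F$) loses a factor $\sqrt n$, so one must keep $DY^Tu$ together and exploit both the diagonal structure — which caps the relevant norm degree at $\log n$ rather than $n$ — and the hypotheses on the means of $D$ and of the $y_i$; treating this coefficient as a single concentrated variable inside the generalized-product machinery, rather than splitting it, is what turns the naive rate $\mathcal E_2(n)$ into the sharp $\mathcal E_2(\sqrt{(p+n)\log n})$.
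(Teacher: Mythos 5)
Your overall strategy is the one the paper uses: measure $X$ by its operator norm, $D$ by $\max_i|D_i|$ and $Y$ by $\|Y^Tu\|_\infty$, compute the expectations $\mu_X=O(\sqrt{p+n})$, $\mu_D=\mu_Y=O(\sqrt{\log n})$ via Proposition~\ref{pro:tao_conc_exp}, and feed the three bounds on the partial variations into the generalized-product machinery to read off the three regimes $\mathcal E_2(\mu^{(2)})+\mathcal E_1(\mu^{(1)})+\mathcal E_{2/3}$. The one place you deviate is deliberate and, in principle, an improvement: the paper simply displays the two product bounds $\|X\|\,\|D\|\,\|Y^Tu\|$ and $\|X\|\,\|D\|_F\,\|Y^Tu\|_\infty$ and invokes Corollary~\ref{cor:expression_avec_indice_de_norme}, whereas you notice that the $X$-variation coefficient is really $\|DY^Tu\|$, which is \emph{not} dominated by $\max(\|D\|,\mu_D)\max(\|Y^Tu\|_\infty,\mu_Y)$ (it can exceed it by a factor $\sqrt n$), and you therefore route that variation through Offshot~\ref{off:plus_de_norme_2} with $\Psi_X=\|DY^Tu\|$. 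That is exactly the right way to make the argument rigorous, and it isolates the point the paper's proof passes over in silence.

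The gap is precisely the estimate you flag as the main obstacle: $\mathbb E\|DY^Tu\|\le O(\sqrt{n\log n})$ does not follow from the stated hypotheses. Since $\|DY^Tu\|^2=\sum_i D_i^2(y_i^Tu)^2$ and the hypotheses allow $\|\mathbb E[D]\|$ and $|u^T\mathbb E[y_i]|$ to be as large as $\sqrt{\log n}$, the deterministic parts alone contribute $\mathbb E[D_i]^2\,(u^T\mathbb E[y_i])^2\sim\log^2 n$ per coordinate (take $D_i\equiv\sqrt{\log n}$ and $y_i\equiv\sqrt{\log n}\,u$ deterministic, which satisfy all assumptions); then $\|DY^Tu\|=\sqrt n\,\log n$, not $\sqrt{n\log n}$. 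There is no ``smallness of $\mathbb E D$'' to exploit: the mean is allowed to be of the same order as the fluctuations, and Cauchy--Schwarz on $\mathbb E[D_i^2(y_i^Tu)^2]$ cannot do better than $O(\log^2 n)$ without independence of $D_i$ and $y_i$. Consequently $\mathbb E[\Psi_X]$ can be of order $\sqrt n\log n$, which exceeds $\mu^{(2)}=\sqrt{(p+n)\log n}$ by a factor $\sqrt{\log n}$ whenever $p\le O(n)$, so the hypothesis of Offshot~\ref{off:plus_de_norme_2} fails and the leading $\mathcal E_2(\sqrt{(p+n)\log n})$ regime cannot be obtained this way. (Indeed, in the example above $XDY^Tu=\log n\cdot X\mathbf 1_n$ has observable diameter $\sqrt n\log n$, so the announced rate is not reachable under the hypotheses as written; the argument closes if one strengthens the assumptions on the means to $\|\mathbb E[D]\|,\sup_i\|\mathbb E[y_i]\|\le O(1)$, in which case $\mathbb E\|DY^Tu\|\le O(\sqrt n)\le O(\mu^{(2)})$ and your plan goes through verbatim.) So your proof does not close as written, but you have correctly located the step on which the whole proposition hinges.
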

\begin{proof}

  The Lipschitz concentration of $XDY^Tu$ is obtained thanks to the inequalities:
  \begin{align*}
    \left\Vert XDY^Tv\right\Vert \leq \left\{
    \begin{aligned}
      &\|X\| \| D\|\|Y^Tu\|\\
      &\|X\| \| D\|_F\|Y^Tu\|_{\infty}.\\
    \end{aligned}\right.
  \end{align*}
  Thanks to the bounds already presented in Example~\ref{exe:borne_esp_norm_vecteur_lin_conc} (the spectral norm $\|\cdot \|$ on $\mathcal D_n$ is like the infinity norm $\|\cdot \|_\infty $ on $\mathbb R^n$), we know that:
  \begin{itemize}
     \item $\mathbb E[\|D\|] \leq O(\sqrt{\log n}) + O(\|\mathbb E[D]\|)\leq O(\sqrt{\log n})$,
     \item $\mathbb E[\|Y^Tu\|_\infty] \leq O(\sqrt{\log p}) + \left\Vert \mathbb E[Y^Tu] \right\Vert_\infty\leq O(\sqrt{\log p}) + \sup_{i\in[n]}\left\Vert \mathbb E[y_i] \right\Vert \leq O(\sqrt{\log p})$,
     \item $\mathbb E[\|X\|] \leq O(\sqrt{ p+n}) $
     \item $\mathbb E[\|Y^Tu\|] \leq O(\sqrt{ n}) + \left\Vert \mathbb E[Y^Tu] \right\Vert\leq O(\sqrt{ n}) + \sqrt n \sup_{i\in[n]}\left\Vert \mathbb E[y_i] \right\Vert \leq O(\sqrt n)$.
   \end{itemize}
   One then obtains the concentrations: 
  \begin{itemize}
    \item $\|D\|\|Y^Tu\| \in O(\sqrt{\log(n)n}) \pm \mathcal E_2(\sqrt{n}) +\mathcal E_1$
    \item $\|X\|\|Y^Tu\|_{\infty} \in O(\sqrt{\log(p)(n+p)}) \pm \mathcal E_2(\sqrt{n+p}) +\mathcal E_1$,
  \end{itemize}
  from which Theorem~\ref{the:concentration_concentrated_variations} allows us to conclude.

  \end{proof}

The concentration of $\tr(A XDY^T)$ signifies a linear concentration of $XDY^T$, demonstrating as in Proposition~\ref{pro:concentration_lineaire_YAX} the relevance of the notation of linear concentration. 
Note besides that this result can be seen as a weak offshot of Hanson-Wright concentration inequality if one takes $D = \sqrt{n} E_{1,1}$, where $[E_{1,1}]_{i,j} = 0$ for all $(i,j)\neq (1,1)$ and $[E_{1,1}]_{1,1} = 1$. 

\begin{proposition}\label{pro:concentration_lineaire_XDY}
  Given three random matrices $X=(x_1,\ldots, x_n),Y=(y_1,\ldots, y_n) \in \mathcal M_{p,n}$ and $D \in \mathcal D_n$ such that $(X,Y,D)\propto \mathcal E_2 $, $\|\mathbb E[D]\|_F\leq O(\sqrt n)$, $\|\mathbb E[X]\|_F, \|\mathbb E[Y]\|_F\leq O(1)$, 
  we have the linear concentration\footnote{The estimation of $\mathbb E[XD Y^T]$ is done in Proposition~\ref{pro:estimation_XDY}}:
  \begin{align*}
    XD Y^T \in   \mathcal E_{2}\left(\sqrt{n} \right) + \mathcal E_{1}\left(\sqrt{\log np} \right) + \mathcal E_{2/3}& 
    &\text{in} \ (\mathcal M_{p}, \|\cdot\|).
  \end{align*}
  If, in addition, $\sup_{i \in[n]}\|\mathbb E[y_ix_i^T]\|_F \leq O(1)$:
  \begin{align*}
    XD Y^T \in   \mathcal E_{2}\left(\sqrt{n} \right) + \mathcal E_{1}\left(\sqrt{\log np} \right) + \mathcal E_{2/3}& 
    &\text{in} \ (\mathcal M_{p}, \|\cdot\|_F).
  \end{align*}
\end{proposition}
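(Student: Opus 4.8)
The plan is to prove both linear concentrations by controlling, for a \emph{fixed} deterministic $A \in \mathcal M_p$, the scalar $\phi_A \equiv \tr(AXDY^T) = \sum_{i=1}^n d_i\, y_i^TAx_i$, with constants uniform over all $A$ such that $\|A\|_* \le 1$ (which yields the $(\mathcal M_p,\|\cdot\|)$ statement, the nuclear norm being dual to $\|\cdot\|$) and over all $A$ such that $\|A\|_F \le 1$ (which, under the extra assumption, yields the $(\mathcal M_p,\|\cdot\|_F)$ statement); in both regimes $\|A\|,\|A\|_F \le 1$. I would then apply Offshot~\ref{off:plus_de_norme_2} with $m=3$, $(Z_1,Z_2,Z_3) = (X,Y,D)$, base norm $\|\cdot\|_F$ on each factor, $\sigma = O(1)$, and parameter vector $\mu = (1,1,C\sqrt n)$ for a suitable constant $C$, so that $\mu^{(0)} = 1$ and $\mu^{(1)} = \mu^{(2)} = O(\sqrt n)$. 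Its conclusion reads $\phi_A \propto \mathcal E_2(\sqrt n) + \mathcal E_1(\sqrt n) + \mathcal E_{2/3}$ around $\mathbb E[\phi_A] = \tr(A\,\mathbb E[XDY^T])$; since $\mathcal E_2(\sqrt n)$ and $\mathcal E_1(\sqrt n)$ share the same observable diameter, the first is absorbed into the second (cf.\ Remark~\ref{rem:regime_concentration_multiple}), leaving $\phi_A \propto \mathcal E_1(\sqrt n) + \mathcal E_{2/3}$, hence $XDY^T \in \mathbb E[XDY^T] \pm \bigl(\mathcal E_1(\sqrt n) + \mathcal E_{2/3}\bigr)$ in the relevant norm ($\mathbb E[XDY^T]$ being finite by Remark~\ref{rem:esperence_definie_fonctionnelle}).

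The hypotheses of Offshot~\ref{off:plus_de_norme_2} are checked by writing each increment of $\phi_A$ as a Frobenius pairing: $\langle X-X', A^TYD\rangle_F$, $\langle Y-Y', AXD\rangle_F$ and $\langle D-D', \diag(Y^TAX)\rangle_F$, whence the admissible coefficients $\Psi_1 = \|A^TYD\|_F$, $\Psi_2 = \|AXD\|_F$, $\Psi_3 = \|Y^TAX\|_d$. Each of these, evaluated on the relevant hybrid vectors, must concentrate as $O(\sqrt n) \pm (\mathcal E_2(\sqrt n) + \mathcal E_1)$. For $\Psi_3$ the hybrid is the genuine $(X,Y)$ (it is the last coordinate), and this is precisely Corollary~\ref{cor:borne_norme_d_XAY}: $\|Y^TAX\|_d \propto \mathcal E_2(\sqrt{\log np}) + \mathcal E_1$ with $\mathbb E\|Y^TAX\|_d \le O(\sqrt n)$, the mean bound using $\|A\|_* \le O(1)$ for the first statement and $\sup_i\|\mathbb E[y_ix_i^T]\|_F \le O(1)$ for the second; since $\sqrt{\log np} \le O(\sqrt n)$ this has the required form. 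Here a direct use of the product-form Theorem~\ref{the:Concentration_produit_de_vecteurs_d_algebre_optimise} would be too crude: any bound of $\|Y^TAX\|_d$ by a product of column-wise norms of $X$ and $Y$ introduces a spurious factor $\sqrt p$, and routing through the sharper a priori estimate of Corollary~\ref{cor:borne_norme_d_XAY} via the Offshot is what keeps the diameter at $\sqrt n$.

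For $\Psi_2$ and $\Psi_1$ the hybrids are $\|AXD'\|_F$ (with $D'$ an independent copy, hence independent of $X$) and $\|A^TY'D'\|_F$ (with $(Y',D')$ an independent copy of $(Y,D)$, jointly $\mathcal E_2$-concentrated by Proposition~\ref{pro:concentration_(X,Y)_independent}, though not necessarily mutually independent). Each is handled by a nested application of Theorem~\ref{the:Concentration_produit_de_vecteurs_d_algebre_optimise} with $m=2$: for $\|AXD'\|_F$, use the semi-norms $\|D'\|_\infty$ on $\mathcal D_n$ and $\|AX\|_{2,\infty} \equiv \max_k \|Ax_k\|$ on $\mathcal M_{p,n}$, which control the two increments via $\|A(X-X')D'\|_F \le \|D'\|_\infty\,\|X-X'\|_F$ and $\|AX(D'-D'')\|_F \le \|AX\|_{2,\infty}\,\|D'-D''\|_F$. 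The key estimates are $\mathbb E\|D'\|_\infty \le O(\sqrt n)$ and, because $A$ with $\|A\|_F \le 1$ compresses, $\mathbb E\|Ax_k\| \le \sqrt{\tr(A^TA\,\mathbb E[x_kx_k^T])} \le \|A\|_F\sqrt{\|\mathbb E[x_kx_k^T]\|} = O(1)$ (Proposition~\ref{pro:carcaterisation_vecteur_linearirement_concentre_avec_moments}), so $\|Ax_k\| \in O(1) \pm \mathcal E_2$ and, by a union bound, $\mathbb E\|AX\|_{2,\infty} \le O(\sqrt{\log n})$. Theorem~\ref{the:Concentration_produit_de_vecteurs_d_algebre_optimise} then gives $\|AXD'\|_F \propto \mathcal E_2(\sqrt n) + \mathcal E_1$ around its mean, while Jensen with $\mathbb E\|AXD'\|_F^2 = \sum_k \mathbb E[d_k'^2]\,\mathbb E\|Ax_k\|^2 \le \bigl(\|\mathbb E[D']\|_F^2 + O(n)\bigr)\cdot O(1) = O(n)$ gives $\mathbb E\|AXD'\|_F \le O(\sqrt n)$; hence $\Psi_2 \in O(\sqrt n)\pm(\mathcal E_2(\sqrt n)+\mathcal E_1)$. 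The treatment of $\Psi_1 = \|A^TY'D'\|_F$ is identical with $\|A^Ty_k'\|$ in place of $\|Ax_k\|$, except that, $Y'$ and $D'$ no longer being independent, the bound $\mathbb E\sum_k d_k'^2\|A^Ty_k'\|^2 \le O(n)$ must be gotten by expanding $d_k'^2\|A^Ty_k'\|^2$ around $(\mathbb E d_k',\mathbb E y_k')$ into four terms, each summing to $O(n)$ via $\|A\|_F \le 1$, $\|\mathrm{Cov}(y_k')\|,\|\mathbb E[y_k']\| = O(1)$, $\mathrm{Var}(d_k') = O(1)$ and $\sum_k(\mathbb E d_k')^2 = O(n)$.

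The main obstacle is precisely this control of $\Psi_1,\Psi_2$: a crude bound such as $\|AXD'\|_F \le \|D'\|_\infty\|AX\|_F$ loses a factor $\sqrt n$ (both factors being of order $\sqrt n$), so one genuinely needs the column-wise semi-norm $\|A\,\cdot\,\|_{2,\infty}$, of mean only $O(\sqrt{\log n})$, to isolate a single power of $\sqrt n$ in the $X$- and $Y$-directions, together with the a priori estimate of Corollary~\ref{cor:borne_norme_d_XAY} in the $D$-direction. Once every coefficient is pinned at scale $\sqrt n$ and no larger, the two nested invocations of the concentration-of-product theorem assemble into the announced two-regime inequality, uniformly in $A$.
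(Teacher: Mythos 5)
Your proposal is correct and follows essentially the same route as the paper: bound $\tr(AXDY^T)$ for fixed $A$ with $\|A\|_*\le 1$ (resp.\ $\|A\|_F\le 1$), control the three increment coefficients $\|A^TYD\|_F$, $\|AXD\|_F$, $\|Y^TAX\|_d$ at scale $O(\sqrt n)\pm\bigl(\mathcal E_2(\sqrt n)+\mathcal E_1\bigr)$ — the last one via Corollary~\ref{cor:borne_norme_d_XAY} — and conclude by Offshot~\ref{off:plus_de_norme_2} with $\mu=(1,1,\sqrt n)$. The only deviation is in re-deriving the content of Lemma~\ref{lem:concentration_AXD}: you use the column-wise semi-norm $\max_k\|Ax_k\|$ with increments $\|A(X-X')D\|_F\le\|D\|\,\|X-X'\|_F$ and $\|AX(D-D')\|_F\le\max_k\|Ax_k\|\,\|D-D'\|_F$, whereas the paper writes $A=U^T\Lambda V$ and works with the entrywise sup norm of $VX$; both devices isolate a single factor of order $\sqrt n$ and yield the required control, so this is a cosmetic rather than structural difference.
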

Before proving this corollary let us give a preliminary lemma of independent interest.
  
\begin{lemma}\label{lem:concentration_AXD}
  Given two random matrices $X \in \mathcal M_{p,n}$ and $D\in \mathcal D_n$ such that $(X,D) \propto \mathcal E_2$, $\|\mathbb E[X]\| \leq O(1)$ and $\|\mathbb E[D] \| \leq O(\sqrt n)$ and a deterministic matrix $A\in \mathcal M_{p}$, such that $\|A\|_F\leq 1$, we have the concentration:
  \begin{align*}
     AXD \propto \mathcal E_2(\sqrt{\log(np)}) + \mathcal E_1&
     &\text{in} \ \ (\mathcal{M}_{p,n}, \|\cdot\|_F).
  \end{align*}
  in addition, one can bound: $\mathbb E[\|AXD\|_F] \leq O(\sqrt n)$.
\end{lemma}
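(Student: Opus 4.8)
\emph{Proof proposal.} The plan is to apply Offshot~\ref{off:plus_de_norme_2} (equivalently Theorem~\ref{the:Concentration_produit_de_vecteurs_d_algebre_optimise}) with $m=2$ to the mapping $\phi(X,D)=\|AXD\|_F$, treating $(X,D)$ as the pair of jointly concentrated vectors, and then to pin down the amplitude $O(\sqrt n)$ by a separate second--moment estimate. Note at the outset that $\|A\|\le\|A\|_F\le 1$, where $\|\cdot\|$ denotes the operator norm.

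I first bound the two partial variations of $\phi$. For the $D$--slot,
\[
\bigl|\,\|AXD\|_F-\|AXD'\|_F\,\bigr|\le\|AX(D-D')\|_F\le\|AX\|_F\,\|D-D'\|,
\]
so the variation coefficient in $D$ is $\Psi_2=\|AX\|_F$ and $D-D'$ is measured in the operator norm on $\mathcal D_n$ (which is $\le\|\cdot\|_F$, so the hypothesis $(X,D)\propto\mathcal E_2$ is preserved by Proposition~\ref{pro:stabilite_lipschitz_mappings}). For the $X$--slot, introducing the semi--norm $\|M\|_A:=\|AM\|_F$ on $\mathcal M_{p,n}$,
\[
\bigl|\,\|AXD\|_F-\|AX'D\|_F\,\bigr|\le\|A(X-X')D\|_F\le\|D\|\,\|X-X'\|_A,
\]
so the variation coefficient in $X$ is $\Psi_1=\|D\|$ and $X-X'$ is measured in $\|\cdot\|_A\ (\le\|\cdot\|_F)$. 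It remains to supply a parameter vector $\mu=(\mu_1,\mu_2)$: since $\|AX\|_F$ is a $1$--Lipschitz observation of $X$ for $\|\cdot\|_F$ we have $\|AX\|_F\propto\mathcal E_2(1)$, while Lemma~\ref{lem:borne_Ax} (with trailing matrix $I_n$) gives $\mathbb E[\|AX\|_F]\le O(\|A\|_F)=O(1)$, so $\|AX\|_F\in O(1)\pm\mathcal E_2(1)$ and we take $\mu_1=O(1)$; and $\|D\|$ is a $1$--Lipschitz observation of $D$ for $\|\cdot\|_F$, so $\|D\|\propto\mathcal E_2(1)$, while Proposition~\ref{pro:tao_conc_exp} applied to the operator norm on $\mathcal D_n$ — which is the $\ell^\infty$ norm on $\mathbb R^n$, of degree $O(\log(np))$ (Example~\ref{exe:norm_degree}) — together with the concentration of $D$ gives $\mathbb E[\|D\|]\le O(\sqrt{\log(np)})$, so we take $\mu_2=O(\sqrt{\log(np)})$. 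One checks directly that the hypotheses of Offshot~\ref{off:plus_de_norme_2} hold with these data (each $\Psi_i$, evaluated at the relevant mixed tuple of $Z$ and an independent copy, keeps the same law and hence the concentration $\in O(\mu^{(1)})\pm\mathcal E_2(1)$).

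With $q=2$, $\sigma=O(1)$, $\mu^{(1)}=\max(\mu_1,\mu_2)=O(\sqrt{\log(np)})$ and $\mu^{(0)}=1$, Offshot~\ref{off:plus_de_norme_2} yields $\|AXD\|_F\propto\mathcal E_2(\sqrt{\log(np)})+\mathcal E_1$, which is the announced fluctuation (for a scalar, Lipschitz and linear concentration coincide by Proposition~\ref{pro:equivalence_concentration_linearire_concentration_lipschitz}). For the amplitude, Jensen gives $\mathbb E[\|AXD\|_F]^2\le\mathbb E[\|AXD\|_F^2]=\sum_j\mathbb E[\,D_{jj}^2\,\|(AX)_{\cdot,j}\|^2\,]$. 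Since $\|\mathbb E[X]\|\le O(1)$, each column $x_j$ of $X$ is $\propto\mathcal E_2$ with $\|\mathbb E[x_j]\|\le O(1)$, whence $\|\mathbb E[x_jx_j^{T}]\|\le O(1)$ by Proposition~\ref{pro:carcaterisation_vecteur_linearirement_concentre_avec_moments}; therefore $\|(AX)_{\cdot,j}\|$ is an $O(1)$--centered $1$--Lipschitz observation of $x_j$ and $\mathbb E[\|(AX)_{\cdot,j}\|^4]\le O(1)$, while $\mathbb E[D_{jj}^4]\le O(\mathbb E[D_{jj}]^4+1)$ by the concentration of $D$. Cauchy--Schwarz on each summand gives $\mathbb E[D_{jj}^2\|(AX)_{\cdot,j}\|^2]\le O(\mathbb E[D_{jj}]^2+1)$, and summing over $j$, $\mathbb E[\|AXD\|_F^2]\le O(\|\mathbb E[D]\|_F^2+n)=O(n)$ by the hypothesis $\|\mathbb E[D]\|_F\le O(\sqrt n)$; hence $\mathbb E[\|AXD\|_F]\le O(\sqrt n)$.

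The delicate point is the choice of semi--norms in which the partial variations are expressed: absorbing $A$ into the $X$--semi--norm keeps $\mathbb E[\|AX\|_F]$ of order $1$, and measuring $D$ in the operator norm (rather than in $\|\cdot\|_F$) makes the $X$--variation coefficient $\|D\|$ of expectation only $\sqrt{\log(np)}$ — which is what produces the small observable diameter instead of the naive $\sqrt n$ that would come from $\|AXD\|_F\le\|AX\|\,\|D\|_F$. One must also keep in mind that $X$ and $D$ are only \emph{jointly} concentrated, so conditioning on $D$ is not available and the product theorem has to be invoked directly.
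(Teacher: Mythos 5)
Your amplitude bound $\mathbb E[\|AXD\|_F]\le O(\sqrt n)$ is correct and is essentially the computation the paper performs (column-wise second moments, Cauchy--Schwarz, then $\|\mathbb E[D]\|_F\le O(\sqrt n)$). The fluctuation part, however, has a genuine gap: the claim $\mathbb E[\|AX\|_F]\le O(\|A\|_F)=O(1)$ is false. Lemma~\ref{lem:borne_Ax} applied with the trailing matrix $B=I_n$ gives $\mathbb E[\|AXB\|_F]\le O(\|A\|_F\|I_n\|_F)=O(\sqrt n\,\|A\|_F)$ --- you dropped the factor $\|B\|_F=\sqrt n$ --- and this order cannot be improved: $\mathbb E[\|AX\|_F^2]=\sum_{j=1}^n\tr\bigl(A^TA\,\mathbb E[x_jx_j^T]\bigr)$ is generically of order $n\|A\|_F^2$ (take $X$ with i.i.d.\ standard Gaussian entries and $A=e_1e_1^T$, so that $\|AX\|_F^2=\sum_jX_{1j}^2\approx n$). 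With the correct value $\mu_1=\Theta(\sqrt n)$, your application of Offshot~\ref{off:plus_de_norme_2} only yields $\|AXD\|_F\propto\mathcal E_2(\sqrt n)+\mathcal E_1$, not the announced $\mathcal E_2(\sqrt{\log(np)})+\mathcal E_1$: the split $\|AX(D-D')\|_F\le\|AX\|_F\,\|D-D'\|$ irreducibly aggregates all $n$ columns of $X$ into the variation coefficient, so no choice of $\mu$ can rescue the $\sqrt{\log(np)}$ diameter from this decomposition.

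The paper's proof splits $A$ differently, and this is the missing idea. Writing $A=U^T\Lambda V$ with $\Lambda=\diag(\lambda)$, $\|\lambda\|=\|A\|_F\le1$, and $\check X=VX$, one has
\begin{align*}
\|\Lambda\check X(D-D')\|_F^2=\sum_{i,j}\lambda_j^2\check X_{j,i}^2(D_i-D_i')^2\le\|\check X\|_\infty^2\,\|\lambda\|^2\,\|D-D'\|_F^2,
\end{align*}
so the $\ell^2$ mass of the singular values of $A$ is paired with the Frobenius norm of $D-D'$, and the coefficient of the $D$-variation is only the entrywise sup norm $\|\check X\|_\infty$, whose expectation is $O(\sqrt{\log(np)})$ because $\|\mathbb E[\check X]\|_\infty\le\|\mathbb E[X]\|\le O(1)$ and the norm degree of $\|\cdot\|_\infty$ on $\mathcal M_{p,n}$ is $\log(np)$. (An equivalent fix inside your framework would be to use the coefficient $\sup_{j\in[n]}\|Ax_j\|$ in place of $\|AX\|_F$.) The $X$-variation is then handled as you do, with coefficient $\|D\|$. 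As a side remark, your bound $\mathbb E[\|D\|]\le O(\sqrt{\log(np)})$ also requires $\|\mathbb E[D]\|\le O(\sqrt{\log(np)})$, which does not follow from the stated hypothesis $\|\mathbb E[D]\|_F\le O(\sqrt n)$; the paper's own proof shares this weakness, and indeed only the weaker conclusion $O(\sqrt n)\pm\mathcal E_2(\sqrt n)+\mathcal E_1$ is used downstream in Proposition~\ref{pro:concentration_lineaire_XDY}.
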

\begin{proof}
  With the same decomposition $A = U^T \Lambda V$ and notation $\check X \equiv VX$ as in the proof of Corollary~\ref{cor:borne_norme_d_XAY}, we have the identity:
  \begin{align*}
    \| AXD \|_F = \| \Lambda \check XD \|_F = \sqrt{\sum_{i=1}^n \sum_{j=1}^p \lambda_j^2 \check X_{i,j}^2 D_{i}^2} \leq \|\check X\|_\infty \|D\|_F,
  \end{align*}
  and besides, $\| AXD \|_F \leq \|\check X\|_F \|D \|$, we can thus employ Theorem~\ref{the:concentration_concentrated_variations} with the concentrations $\hat X, D \propto E_2$ and:
  \begin{align*}
    \|\hat X\|_{\infty} \in O(\sqrt{\log np}) \pm \mathcal E_2&
    &\text{and}&
    &\|D\| \in O(\sqrt{\log n}) \pm \mathcal E_2,
  \end{align*}
  to obtain $AXD \propto \mathcal E_2(\sqrt{\log np}) + \mathcal E_1$. 
  To bound the expectation, let us note that for all $i\in[n], j\in [p]$, $\check X_{i,j} \in O(1) \pm \mathcal E_2$ and $D_{i} \in O(1) \pm \mathcal E_2$, therefore, $(\check X_{i,j}D_i)^2 \in O(1) \pm \sum_{i=1}^4\mathcal E_{\frac{2}{i}} $, with concentration constants independent of $i,j$. Finally, we can bound:
  %
  \begin{align*}
    \mathbb E[\| AXD \|_F] \leq \sqrt{\sum_{i=1}^n \sum_{j=1}^p \lambda_j^2\mathbb E[ \check X_{i,j}^2 D_{i}^2]} \leq \sqrt{ \left(\sum_{i=1}^n O(1)\right) \left(\sum_{j=1}^p \lambda_j^2\right)} \leq O(\sqrt n) .
  \end{align*}  
\end{proof}
\begin{proof}[Proof of Proposition~\ref{pro:concentration_lineaire_XDY}]
  Considering a deterministic matrix $A \in \mathcal M_{p,n}$, we will assume that $\|A\|_F \leq 1$ if $\sup_{i \in[n]}\|\mathbb E[y_ix_i^T]\|_F \leq O(1)$ (to show a concentration in $(\mathcal M_{p,n}, \|\cdot\|_F)$) and that $\|A\|_* \leq 1$ otherwise (to show a concentration in $(\mathcal M_{p,n}, \|\cdot\|)$). In both cases, Corollary~\ref{cor:borne_norme_d_XAY} and Lemma~\ref{lem:concentration_AXD} allows us to set:
  \begin{align*}
    \|Y^TAX\|_d, \ \|AXD\|_F, \ \|DY^TA\|_F \in O(\sqrt n) \pm \mathcal E_2(\sqrt{\log np}) + \mathcal E_1.
  \end{align*}  
  Besides, we can bound:
  \begin{align*}
     \tr(AXDY^T) \leq \left\{\begin{aligned}
       &\|AXD\|_F \|Y\|_F\\
       &\|DY^TA\|_F \|X\|_F\\
       &\|Y^TAX\|_d \|D\|_d,
     \end{aligned}\right.
  \end{align*}
  which allows us to conclude thanks to Theorem~\ref{the:concentration_concentrated_variations}.
\end{proof}

In the setting of Proposition~\ref{pro:Concentration_XDYu}, once one knows that $XDY$ is concentrated it is natural to look for a simple deterministic equivalent. The next proposition help us for such a design.
Note that the hypotheses are far lighter, in particular, we just need the linear concentration of $D$.

\begin{proposition}\label{pro:estimation_XDY}
  Given three random matrices $D \in \mathcal D_n$, $X=(x_1,\ldots, x_n),Y=(y_1,\ldots, y_n) \in \mathcal M_{p,n}$ and a deterministic matrix $\tilde D \in \mathcal D_n$, such that  $D \in \tilde D \pm\mathcal E_2$ in $(\mathcal D_n, \|\cdot\|)$ 
  and for all\footnote{If we adopt the stronger assumptions $(X,Y) \propto \mathcal E_2$ in $(\mathcal M_{p,n}, \|\cdot\|)$ and $\|\mathbb E[X]\|_F, \|\mathbb E[X]\|_F \leq O(1)$, we can show more directly thanks to Propositions~\ref{pro:concentration_lineaire_YAX} and~\ref{pro:tao_conc_exp}:
  \begin{align*}
    &\left\vert \mathbb E[\tr(AXDY^T)] -\mathbb E[\tr(AX\mathbb E[D] Y^T)]\right\vert\\
    &\hspace{1cm}=  \left\vert \mathbb E \left[\tr \left(\left(Y^TAX - \mathbb E[Y^TAX]\right) \left(D - \mathbb E[D]\right)\right) \right] \right\vert\\
    &\hspace{1cm}\leq \sqrt{\mathbb E \left[\left\Vert Y^TAX - \mathbb E[Y^TAX]\right\Vert_d^2\right] \mathbb E \left[\left\Vert D - \mathbb E[D]\right\Vert_d^2 \right]}
    \ \leq O(n)
  \end{align*}} $i \in [n]$, $(x_i,y_i)\propto \mathcal E_2$ and $\sup_{i\in[n]}\|\mathbb E[x_i]\|, \|\mathbb E[y_i]\| \leq O(1)$, 
  we have the estimate:
  \begin{align*}
    \left\Vert \mathbb E[XDY^T] -\mathbb E[X \mathbb E[D]Y^T]\right\Vert_F \leq O \left(n\right).
  \end{align*}
  We can precise the estimation with supplementary assumptions:
  \begin{itemize}
    \item if $\|\tilde D - \mathbb E[D]\|_F \leq O(1)$ then $\left\Vert \mathbb E[X(D-\tilde D)Y^T]\right\Vert_F \leq O \left(\sqrt{n \max(p,n)}\right)$
    \item if $\sup_{i\in [n]}\|\mathbb E[x_iy_i^T] \|_F\leq O(1)$ then $\left\Vert \mathbb E[XDY^T] -\mathbb E[X \tilde DY^T]\right\Vert_F \leq O \left(n\right)$.
  \end{itemize}  
  
\end{proposition}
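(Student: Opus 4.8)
The plan is to expand $XDY^T$ over the columns of $X$ and $Y$, reduce the statement to a column‑wise estimate of the ``covariance'' between the diagonal entry $D_i$ and the rank‑one matrix $x_iy_i^T$, and invoke the Hanson--Wright inequality (Proposition~\ref{pro:concentration_lineaire_YAX}) applied to the single columns $x_i,y_i\in\mathcal M_{p,1}$ so that the variances that show up stay of order $\|A\|_F^2$ and not $p\,\|A\|_F^2$. Writing $X=(x_1,\ldots,x_n)$, $Y=(y_1,\ldots,y_n)$ and $D_i\equiv D_{i,i}$, one has $XDY^T=\sum_{i=1}^n D_i\,x_iy_i^T$ and $X\mathbb E[D]Y^T=\sum_{i=1}^n \mathbb E[D_i]\,x_iy_i^T$ (all these expectations are finite because $x_i,y_i\propto\mathcal E_2$ and $D_i$ is a concentrated scalar, so all have finite moments), hence
\begin{align*}
  \mathbb E[XDY^T]-\mathbb E[X\mathbb E[D]Y^T]=\sum_{i=1}^n\mathbb E\!\left[(D_i-\mathbb E[D_i])\,x_iy_i^T\right].
\end{align*}

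Using $\|M\|_F=\sup_{\|A\|_F\leq1}\tr(A^TM)$ together with $\tr\!\big(A^T\mathbb E[(D_i-\mathbb E[D_i])x_iy_i^T]\big)=\mathbb E[(D_i-\mathbb E[D_i])\,x_i^TAy_i]$, one recenters $x_i^TAy_i$ thanks to $\mathbb E[D_i-\mathbb E[D_i]]=0$:
\begin{align*}
  \tr\!\big(A^T\mathbb E[(D_i-\mathbb E[D_i])x_iy_i^T]\big)=\mathbb E\!\left[(D_i-\mathbb E[D_i])\big(x_i^TAy_i-\mathbb E[x_i^TAy_i]\big)\right].
\end{align*}
This recentering is essential, since the term $\mathbb E[x_i^TAy_i]=\tr(A\,\mathbb E[y_ix_i^T])$ can be as large as $O(\sqrt p\,\|A\|_F)$ and would otherwise ruin the estimate. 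Now Cauchy--Schwarz bounds the right‑hand side by $\sqrt{\mathrm{Var}(D_i)}\,\sqrt{\mathrm{Var}(x_i^TAy_i)}$, and both variances are controlled uniformly in $i$ and $A$: the linear form $D\mapsto D_i$ is $1$‑Lipschitz on $(\mathcal D_n,\|\cdot\|)$, so $D_i-\tilde D_i\propto\mathcal E_2$ with observable diameter $O(1)$ and, by Proposition~\ref{pro:characterization_moments}, $\mathrm{Var}(D_i)\leq\mathbb E[(D_i-\tilde D_i)^2]\leq O(1)$ and $|\mathbb E[D_i]-\tilde D_i|\leq\mathbb E|D_i-\tilde D_i|\leq O(1)$; and Proposition~\ref{pro:concentration_lineaire_YAX} applied to $x_i,y_i$ (which satisfy $(x_i,y_i)\propto\mathcal E_2$ and $\|\mathbb E[x_i]\|,\|\mathbb E[y_i]\|\leq O(1)$) gives $x_i^TAy_i\in\mathcal E_2(\|A\|_F)+\mathcal E_1(\|A\|)$, whence $\mathrm{Var}(x_i^TAy_i)\leq O(\|A\|_F^2+\|A\|^2)=O(\|A\|_F^2)$. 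Therefore $\|\mathbb E[(D_i-\mathbb E[D_i])x_iy_i^T]\|_F\leq O(1)$ for every $i$, and summing over the $n$ columns yields $\|\mathbb E[XDY^T]-\mathbb E[X\mathbb E[D]Y^T]\|_F\leq O(n)$, the announced estimate.

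For the refinements, split $D_i-\tilde D_i=(D_i-\mathbb E[D_i])+(\mathbb E[D_i]-\tilde D_i)$, so that $\mathbb E[X(D-\tilde D)Y^T]=\sum_i\mathbb E[(D_i-\mathbb E[D_i])x_iy_i^T]+\sum_i(\mathbb E[D_i]-\tilde D_i)\,\mathbb E[x_iy_i^T]$, the first sum having Frobenius norm $\leq O(n)$ by the previous step. For the second sum, recall $\|\mathbb E[x_iy_i^T]\|\leq O(1)$ uniformly (Proposition~\ref{pro:carcaterisation_vecteur_linearirement_concentre_avec_moments}, cf.\ Remark~\ref{rem:E_x_i_y_i_F_borne}) and that the diagonal matrix $\mathbb E[D]-\tilde D$ has $\|\mathbb E[D]-\tilde D\|_F=\sqrt{\sum_i(\mathbb E[D_i]-\tilde D_i)^2}$. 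If $\|\tilde D-\mathbb E[D]\|_F\leq O(1)$, then, using $\|\mathbb E[x_iy_i^T]\|_F\leq\sqrt p\,\|\mathbb E[x_iy_i^T]\|\leq O(\sqrt p)$, the second sum has Frobenius norm $\leq O(\sqrt p)\sum_i|\mathbb E[D_i]-\tilde D_i|\leq O(\sqrt p)\,\sqrt n\,\|\mathbb E[D]-\tilde D\|_F\leq O(\sqrt{np})$, so $\|\mathbb E[X(D-\tilde D)Y^T]\|_F\leq O(n+\sqrt{np})=O(\sqrt{n\max(p,n)})$. If instead $\sup_i\|\mathbb E[x_iy_i^T]\|_F\leq O(1)$, then, since each $|\mathbb E[D_i]-\tilde D_i|\leq O(1)$, the second sum has Frobenius norm $\leq\sum_iO(1)\leq O(n)$, giving $\|\mathbb E[XDY^T]-\mathbb E[X\tilde DY^T]\|_F\leq O(n)$.

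The main obstacle is the column‑wise variance estimate in the second paragraph: the naive bound $\mathbb E[(x_i^TAy_i)^2]\leq\mathbb E[\|A^Tx_i\|^2\|y_i\|^2]$ can be too large by a factor $p$ when $x_i$ and $y_i$ are correlated, so one genuinely needs the Hanson--Wright control of $\mathrm{Var}(x_i^TAy_i)$ from Proposition~\ref{pro:concentration_lineaire_YAX}, combined with the recentering that removes the potentially large mean $\mathbb E[x_i^TAy_i]$, to keep the final bound at $O(n)$; everything else is bookkeeping with Cauchy--Schwarz and the triangle inequality.
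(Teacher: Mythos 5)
Your proof is correct and follows essentially the same route as the paper's: the column decomposition $XDY^T=\sum_i D_i x_i y_i^T$, duality against a test matrix $A$ with $\|A\|_F\leq 1$, recentering $x_i^TAy_i$ using $\mathbb E[D_i-\mathbb E[D_i]]=0$, Cauchy--Schwarz combined with the Hanson--Wright control of $\mathrm{Var}(x_i^TAy_i)$ from Proposition~\ref{pro:concentration_lineaire_YAX}, and the same splitting $D-\tilde D=(D-\mathbb E[D])+(\mathbb E[D]-\tilde D)$ with the bounds $\|\mathbb E[x_iy_i^T]\|_F\leq O(\sqrt p)$ (resp.\ $O(1)$) for the two refinements. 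The only cosmetic difference is that you phrase the cross term via the triangle inequality on $\sum_i(\mathbb E[D_i]-\tilde D_i)\mathbb E[x_iy_i^T]$ where the paper keeps it in the dual form $\sup_i|\mathbb E[x_i^TAy_i]|\sqrt n\|\mathbb E[D]-\tilde D\|_F$, which is the same estimate.
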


\begin{proof}
  Considering a deterministic matrix $A\in \mathcal M_{p}$, such that $\|A\|_F \leq 1$:
  \begin{align*}
    &\left\vert \mathbb E[\tr(AXDY^T)] -\mathbb E[\tr(AX\mathbb E[D] Y^T)]\right\vert\\
    &\hspace{1cm}\leq \sum_{i=1}^n \left\vert \mathbb E \left[D_i x_i^T A y_i -\mathbb E[D_i] x_i^T A y_i\right]\right\vert \\
    &\hspace{1cm}= \sum_{i=1}^n \left\vert \mathbb E \left[ \left(x_i^T A y_i - \mathbb E[x_i^T A y_i]\right) \left(D_i-\mathbb E[D_i]\right)\right]\right\vert\\
    &\hspace{1cm}\leq \sum_{i=1}^n  \sqrt{\mathbb E \left[\left\vert x_i^T A y_i - \mathbb E [x_i^T A y_i] \right\vert^2 \right] \mathbb E \left[\left\vert D_i -\mathbb E[D_i] \right\vert^2\right]} \ \ 
    \leq  \ O \left(n \right)
  \end{align*}
  thanks to H\"older's inequality applied to the bounds given by Proposition~\ref{pro:characterization_exponential_concentration_multiple_regime} (we know that $D_i \in \mathbb E[D_i] \pm \mathcal E_2$ and from Proposition~\ref{pro:concentration_lineaire_YAX} that $x_i^T A y_i \in \mathbb E[x_i^T A y_i] \pm \mathcal E_2 + \mathcal E_1$; note that the concentration constants are the same for all $i\in [n]$). 

  Now, $|\mathbb E[x_i^T A y_i]| \leq \|A\|_F \|\mathbb E[y_ix_i^T]\|_F \leq O(\sqrt{p})$ thanks to Remark~\ref{rem:E_x_i_y_i_F_borne} and if $\|\mathbb E[D] - \tilde D\|_F\leq O(1)$, we can bound:
  \begin{align*}
    \left\vert \mathbb E \left[\tr\left(AX \! \left(\mathbb E[D] -\tilde D\right)Y^T \!\right)\right]\right\vert\! \!
    &\leq  \sum_{i=1}^n \left\vert \mathbb E \left[ x_i^T A y_i \right] \left(\mathbb E[D_i] - \tilde D_i\right)\right\vert\\
    &\leq \sup_{i\in [n]} \left\vert \mathbb E \left[ x_i^T A y_i \right] \right\vert \sqrt{n}\left\Vert \mathbb E[D] -\tilde D\right\Vert_F \! \! \leq \!  O \left(\sqrt{np} \right).
  \end{align*}
  If, $\|\mathbb E[D] - \tilde D\|_F$ is possibly of order far bigger than $O(1)$, but $\sup_{i\in [n]} \|\mathbb E[y_ix_i^T]\|_F \leq O(1)$, then $\sup_{i\in [n]}|\mathbb E[x_i^T A y_i]| \leq O(1)$, and we can still bound:
  \begin{align*}
     \left\Vert \mathbb E \left[\tr\left(AX \left(\mathbb E[D] -\tilde D\right)Y^T\right)\right]\right\Vert_F \leq n\sup_{i\in [n]} \left\vert \mathbb E \left[ x_i^T A y_i \right] \right\vert \left\Vert \mathbb E[D] -\tilde D\right\Vert \leq O(n).
  \end{align*}
\end{proof}

Let us end this article with a non multi-linear application of Theorem~\ref{the:Concentration_produit_de_vecteurs_d_algebre_optimise}.

\section{Concentration of the resolvent $(I_p - \frac1nXDY^T)^{-1}$}
We study here the concentration of a resolvent $Q =(I_p - \frac{1}{n}XDY^T)^{-1}$ with the assumption of Proposition~\ref{pro:concentration_lineaire_XDY} for $X, Y$ and $D$ (in particular $D$ is random). Among other use, this object appears when studying robust regression \cite{ELK13, MAI19}. 
In several settings, robust regression can be expressed by the following fixed point equation:
\begin{align}\label{eq:point_fixe_robust}
   \beta = \frac{1}{n} \sum_{i=1}^n f(x_i^T\beta)x_i, \ \ \beta \in \mathbb R^p,
 \end{align} 
where $\beta$ is the weight vector performing the regression (to classify data, for instance).
It was then shown in \cite{SED21} that the estimation of the expectation and covariance of $\beta$ (and therefore, of the performances of the algorithm) rely on an estimation of $Q$, with $D = \diag(f'(x_i^T\beta))$.
To obtain a sharp concentration on $Q$ (as it is done in Theorem~\ref{the:concentration_resolvent} below), one has to understand the dependence between $Q$ and $x_i$, for all $i\in[n]$. This is performed with the notation, given for any $M = (m_1,\ldots, m_n)\in \mathcal M_{p,n}$ or any $\Delta = \diag_{i\in[n]}(\Delta_i) \in \mathcal D_n$:
\begin{itemize}
  \item $M_{-i} = (m_1,\ldots, m_{i-1}, 0, m_{i+1},\ldots, m_n) \in \mathcal M_{p,n}$,
  \item $\Delta_{-i} = \diag  (\Delta_1,\ldots, \Delta_{i-1}, 0, \Delta_{i+1},\ldots, \Delta_n) \in \mathcal D_n$.
\end{itemize}

The structure of the study of the resolvent is very similar to the one conducted in Section~\ref{sec:XDY} and we will try to draw the maximum of analogy between the two sections. The first theorem should for instance be compared to Proposition~\ref{pro:estimation_XDY} and Proposition~\ref{pro:concentration_lineaire_XDY}.

\begin{theorem}\label{the:concentration_resolvent}
   Given a random diagonal matrix $D \in \mathcal D_n$ and a random matrices $X = (x_1,\ldots, x_n)$, in the regime\footnote{It is not necessary to assume that $p\leq O(n)$ but it simplifies the concentration result (if $p \gg n$, the concentration is not as good, but it can still be expressed).} $p\leq O(n)$ and under the assumptions:
   \begin{itemize}
     \item $(X,D) \propto \mathcal E_2$,
     \item all the couples $(x_i,y_i)$ are independent,
     \item $O(1) \leq \sup_{i\in[n]}\|\mathbb E[x_i] \|, \sup_{i\in[n]}\|\mathbb E[y_i] \|\leq O(1)$,
     \item for all $i \in[n]$, there exists a random diagonal matrix $D^{(i)}$, independent of $x_i$, such that $ \sup_{i\in[n]}\|D_{-i} - D_{-i}^{(i)}\|_F \leq O(1)$,
     \item there exist\footnote{The assumptions $\|X\|/\sqrt n$ and $\|Y\|/\sqrt n$ bounded and $\kappa^2 \kappa_D\leq 1-\varepsilon$ might look a bit strong (since it is not true for matrices with i.i.d. Gaussian entries) and it is indeed enough to assume that $\mathbb E[\|X\|] \leq O(\sqrt n)$ introduce a parameter $z>0$ and study the behavior of $(zI_p - \frac{1}{n}XDX^T)^{-1}$ when $z$ is far from the spectrum of $\frac{1}{n}XDX^T$ -- as it is done in \cite{louart2021spectral}.
      We however preferred here to make a relatively strong hypothesis not to have supplementary notations and proof precautions, that might have blurred the message.} three constants $\kappa_X,\kappa_D, \varepsilon>0$ ($\varepsilon \geq O(1)$ and $\kappa_X,\kappa_D\leq O(1)$), such that $\|X\| \leq \sqrt n\kappa_X$, $\|D\| \leq \kappa_D$ and $\kappa_X^2 \kappa_D\leq 1-\varepsilon$,
   \end{itemize}
   the resolvent $Q \equiv (I_p - \frac{1}{n}XDX^T)^{-1}$ follows the linear concentration
   \begin{align*}
    Q \in\mathcal E_2 \left( \frac{\log^{\frac{3}{2}}n}{\sqrt n}\right) \pm \mathcal E_{\frac{1}{2}} \left(\frac{\log^{\frac{3}{2}} n}{n}\right)&
    &\text{in} \ \ \left( \mathcal{M}_{p}, \|\cdot\| \right).
  \end{align*}

 \end{theorem}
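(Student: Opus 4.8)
The plan is to imitate the treatment of $XDY^T$ in Section~\ref{sec:XDY}: bound the variations of a generic $1$-Lipschitz linear observation $M\mapsto\tr(AM)$ of $Q$ (with $A\in\mathcal M_p$, $\|A\|_*\leq1$, hence also $\|A\|_F\leq1$) with respect to each of the three random matrices $X$, $D$, $Y$, and then feed the corresponding variation coefficients into Offshot~\ref{off:plus_de_norme_2}, exactly as Proposition~\ref{pro:concentration_lineaire_XDY} does. First, the deterministic bounds $\|X\|,\|Y\|\leq\sqrt n\,\kappa$, $\|D\|\leq\kappa_D$ and $\kappa^2\kappa_D\leq1-\varepsilon$ give $\|\frac1nXDY^T\|\leq1-\varepsilon$, so $Q$ is well defined with $\|Q\|\leq1/\varepsilon$ surely, and similarly $\|Q^{(i)}\|\leq O(1)$ on events of overwhelming probability, where $Q^{(i)}\equiv(I_p-\frac1nX_{-i}D^{(i)}_{-i}Y_{-i}^T)^{-1}$ is the leave-one-out resolvent (independent of $(x_i,y_i)$ by hypothesis); all the maps below are then Lipschitz on the relevant (essentially full) support, which is what Lemma~\ref{lem:concentration_sous_ensemble} requires.

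Next, using the resolvent identity $Q(X)-Q(X')=Q(X)\frac1n(X-X')DY^TQ(X')$ and trace duality one gets on the support
\begin{align*}
  |\tr(A(Q(X)-Q(X')))|&\leq\tfrac1n\|DY^TQ(X')AQ(X)\|_F\,\|X-X'\|_F\leq O(\tfrac1{\sqrt n})\,\|X-X'\|_F,
\end{align*}
and symmetrically in $Y$, while varying $D$ yields $|\tr(A(Q(D)-Q(D')))|\leq\frac1n\|Y^TQ(D')AQ(D)X\|_d\,\|D-D'\|_F$, only the diagonal of $Y^TQ'AQX$ entering. Thus the variation coefficients attached to $X$ and $Y$ are $O(1/\sqrt n)$ surely, but the coefficient attached to $D$, namely $\Psi_D=\frac1n\|Y^TQ'AQX\|_d$, is only $O(1)$ surely (via $\|X\|\|Y\|\leq n\kappa^2$); plugging this crude bound into Offshot~\ref{off:plus_de_norme_2} would give the useless $Q\propto\mathcal E_2(O(1))$, so the heart of the matter is to improve the estimate of $\Psi_D$.

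The key step --- and the main obstacle --- is to show that $\Psi_D$ is of order $1/\sqrt n$ in mean and enjoys a Hanson--Wright-type concentration, i.e.\ that $\|Y^TQ'AQX\|_d=\big(\sum_{i=1}^n(y_i^TQ'AQx_i)^2\big)^{1/2}$ is of order $\sqrt n$ rather than $n$. This is where the independence hypotheses are used: for each $i$ one replaces $Q$ and $Q'$ inside $y_i^TQ'AQx_i$ by the leave-one-out $Q^{(i)}$; Sherman--Morrison together with $\sup_i\|D_{-i}-D^{(i)}_{-i}\|_F\leq O(1)$ and $\|Q\|,\|Q^{(i)}\|\leq O(1)$ makes the correction controllable (this is the content of the companion lemma on $\|Q-Q^{(i)}\|$-type estimates), so that, up to a lower-order term, $y_i^TQ'AQx_i$ is a bilinear form $y_i^TM_ix_i$ in the independent pair $(x_i,y_i)$ with $\|M_i\|_F\leq\|Q^{(i)}\|^2\|A\|_F=O(1)$ and $M_i$ (essentially) independent of $(x_i,y_i)$. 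Proposition~\ref{pro:concentration_lineaire_YAX} then controls each term, and a column-wise application of Corollary~\ref{cor:borne_norme_d_XAY} gives both $\mathbb E[\|Y^TQ'AQX\|_d]\leq O(\sqrt n)$ and the $\mathcal E_2(\sqrt{\log(np)})+\mathcal E_1$-shaped concentration of $\|Y^TQ'AQX\|_d$ needed as the hypothesis on $\Psi_D$ in Offshot~\ref{off:plus_de_norme_2}. Carrying this bookkeeping uniformly in $i$ with negligible exceptional events is the technically heaviest part, and the $\sqrt{\log}$ factors it introduces (through the degrees of $\|\cdot\|_\infty$ and $\|\cdot\|_d$), together with the residual exponential regimes, are precisely what turns the $\mathcal E_1(\sqrt n)+\mathcal E_{2/3}$ of Proposition~\ref{pro:concentration_lineaire_XDY} into the announced $\mathcal E_1(\sqrt{\log n/n})+\mathcal E_{1/2}(1/\sqrt n)$.

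Finally, with the $X$- and $Y$-coefficients bounded by $O(1/\sqrt n)$ and $\Psi_D$ controlled as above, Offshot~\ref{off:plus_de_norme_2} applied to $(X,D,Y)\propto\mathcal E_2$ with the parameter vector reflecting $\|X\|/\sqrt n,\|Y\|/\sqrt n=O(1)$ and $\|D\|_F=O(\sqrt n)$ (as in Proposition~\ref{pro:concentration_lineaire_XDY}, up to the $\frac1n$ scaling carried by $Q$ and the leave-one-out cost) yields the Lipschitz concentration $\tr(AQ)\propto\mathcal E_1(\sqrt{\log n/n})+\mathcal E_{1/2}(1/\sqrt n)$. Since this holds for every $A$ with $\|A\|_*\leq1$, it is exactly the linear concentration of $Q$ in $(\mathcal M_p,\|\cdot\|)$; Lemma~\ref{lem:conc_lip_imply_conc_linear} lets one centre at $\mathbb E[Q]$, and Lemma~\ref{lem:diametre_observable_pivot} then permits replacing $\mathbb E[Q]$ by any deterministic equivalent at distance $O(\sqrt{\log n/n})$, e.g.\ a tractable one built from $\mathbb E[D]$ as in Proposition~\ref{pro:estimation_XDY}. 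The crux of the whole argument is the third step above: extracting the Hanson--Wright gain $\sqrt n$ (not $n$) for $\|Y^TQ'AQX\|_d$ in spite of the two-sided dependence of $Q$ on every column of $X$ and $Y$.
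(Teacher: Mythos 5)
Your proposal is correct and follows essentially the same route as the paper: bound the variations of $\tr(AQ)$ in $X$ and $Y$ by $O(1/\sqrt n)\|{\cdot}\|_F$ via the resolvent identity, reduce the $D$-variation to $\frac1n\|Y^TQ'AQX\|_d$, control that quantity at order $1/\sqrt n$ through leave-one-out resolvents, the hypothesis on $D^{(i)}$, and a Hanson--Wright argument on $y_i^TQ^{(i)}AQ^{(i)}x_i$ (the paper's Lemmas~\ref{lem:concentration_xiQAQyi} and~\ref{lem:concentration_norme_d_XQAQY}), and conclude with Offshot~\ref{off:plus_de_norme_2}. You also correctly identify the crux --- extracting the $\sqrt n$ (rather than $n$) bound for $\|Y^TQ'AQX\|_d$ --- which is exactly where the paper concentrates its technical effort.
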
 

 Inspiring from the identities provided in \cite{SIL95, PAJ09,louart2021spectral}, one can further estimate this random matrix thanks to the following notation, given $\delta, D\in \mathcal M_n $:
 \begin{align*}
   \tilde Q^\delta(D) \equiv \left( I_p - \frac{1}{n} \sum_{i=1}^n \mathbb E \left[ \frac{D_i}{1 + \delta D_i} \right] \Sigma_i \right)^{-1},
 \end{align*}
 where we noted for all $i \in [n]$, $\Sigma_i \equiv \mathbb E[x_iy_i^T]$.
 \begin{theorem}\label{the:Estimation_resolvante}
  Given a random diagonal matrix $D \in \mathcal{D}_{n}$, the fixed point equation:
  \begin{align*}
    \delta = \diag_{i\in [n]} \left( \Sigma_i \tilde Q^\delta(D) \right)
  \end{align*}
  admits a unique solution $\delta \in \mathcal D_n$ and, under the hypotheses of Theorem~\ref{the:concentration_resolvent}, 
  one can estimate:
  \begin{align*}
    \|\mathbb E[Q ]-\tilde Q^\delta(D)\|_F \leq O(\log^{\frac{3}{2}} n).
  \end{align*}
 \end{theorem}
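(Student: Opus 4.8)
The plan is to combine the concentration of $Q$ from Theorem~\ref{the:concentration_resolvent} with a leave-one-out expansion of $\mathbb{E}[Q]$, the error terms being controlled by the Hanson--Wright inequality of Proposition~\ref{pro:concentration_lineaire_YAX}. First I would dispatch the well-posedness of the fixed point: writing $\Psi(\delta)=\diag_{i\in[n]}(\Sigma_i\tilde Q^\delta(D))$, the standing assumptions $\|X\|,\|Y\|\le\sqrt n\,\kappa$, $\|D\|\le\kappa_D$, $\kappa^2\kappa_D\le1-\varepsilon$ and $\|\Sigma_i\|\le O(1)$ (Proposition~\ref{pro:carcaterisation_vecteur_linearirement_concentre_avec_moments}) guarantee, on the ball $\mathcal{U}=\{\delta\in\mathcal D_n:\ \|\delta\|\le O(1)\}$, that the denominators $1+\delta_iD_i$ stay bounded away from $0$ and $\|\tfrac1n\sum_i\mathbb{E}[\tfrac{D_i}{1+\delta_iD_i}]\Sigma_i\|\le1-\varepsilon'$, so $\tilde Q^\delta(D)$ is well defined with $\|\tilde Q^\delta(D)\|\le1/\varepsilon'$ and $\Psi(\mathcal U)\subset\mathcal U$. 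Uniqueness then follows from the contraction estimate obtained by combining the resolvent identity
\begin{align*}
  \tilde Q^\delta(D)-\tilde Q^{\delta'}(D)=\tilde Q^\delta(D)\Big(\tfrac1n\textstyle\sum_i\big(\mathbb{E}[\tfrac{D_i}{1+\delta_iD_i}]-\mathbb{E}[\tfrac{D_i}{1+\delta'_iD_i}]\big)\Sigma_i\Big)\tilde Q^{\delta'}(D)
\end{align*}
with $\mathbb{E}[\tfrac{D_i}{1+\delta_iD_i}]-\mathbb{E}[\tfrac{D_i}{1+\delta'_iD_i}]=-\mathbb{E}\big[\tfrac{D_i^2(\delta_i-\delta'_i)}{(1+\delta_iD_i)(1+\delta'_iD_i)}\big]$, which yields a Lipschitz constant $<1$ on $\mathcal U$; Banach's theorem closes this part.

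For the estimate I would start from the deterministic bound $\|Q\|\le1/\varepsilon$ (since $\|\tfrac1nXDY^T\|\le\kappa^2\kappa_D\le1-\varepsilon$), the analogous bound for each leave-one-out resolvent $Q_{-i}=(I_p-\tfrac1nX_{-i}D^{(i)}_{-i}Y_{-i}^T)^{-1}$, and the independence of $Q_{-i}$ from $(x_i,y_i)$. From $\mathbb{E}[Q]=I_p+\tfrac1n\sum_i\mathbb{E}[D_ix_iy_i^TQ]$ and $\tilde Q^\delta(D)=I_p+\tfrac1n\sum_i\mathbb{E}[\tfrac{D_i}{1+\delta_iD_i}]\Sigma_i\tilde Q^\delta(D)$ I would form the comparison identity
\begin{align*}
  \mathbb{E}[Q]-\tilde Q^\delta(D)=\tfrac1n\textstyle\sum_{i=1}^n\Big(\mathbb{E}[D_ix_iy_i^TQ]-\mathbb{E}[\tfrac{D_i}{1+\delta_iD_i}]\Sigma_i\,\tilde Q^\delta(D)\Big),
\end{align*}
then in the $i$-th summand replace $Q$ by $Q_{-i}$ through the Sherman--Morrison identity $y_i^TQ=s_i^{-1}y_i^TQ_{-i}+(\text{error from }D-D^{(i)})$ with $s_i=1-\tfrac1nD_iy_i^TQ_{-i}x_i$. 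Conditioning on $(Q_{-i},D^{(i)})$ and integrating out $(x_i,y_i)$ makes $\mathbb{E}_{x_i,y_i}[x_iy_i^T]=\Sigma_i$ appear; Proposition~\ref{pro:concentration_lineaire_YAX}, applied conditionally, shows $\tfrac1ny_i^TQ_{-i}x_i$ lies within $O(\sqrt{\log(np)/n})$ of $\tfrac1n\tr(\Sigma_iQ_{-i})$, while Lemma~\ref{lem:concentration_sous_ensemble} isolates a good event on which $s_i$ is bounded away from $0$; replacing $\tfrac1n\tr(\Sigma_iQ_{-i})$ by $\tfrac1n\tr(\Sigma_i\tilde Q^\delta(D))$ then makes $\mathbb{E}[D_i/s_i]$ coincide with the weight $\mathbb{E}[\tfrac{D_i}{1+\delta_iD_i}]$ appearing in $\tilde Q^\delta(D)$, which is precisely the point of the fixed-point equation for $\delta$.

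Collecting the leading terms turns the comparison identity into $(\tilde Q^\delta(D))^{-1}(\mathbb{E}[Q]-\tilde Q^\delta(D))=\mathcal R$, hence $\|\mathbb{E}[Q]-\tilde Q^\delta(D)\|_F=O(\|\mathcal R\|_F)$, and everything reduces to bounding the residual $\mathcal R=\tfrac1n\sum_i\mathcal R_i$, which I expect to be the main obstacle. It gathers three heterogeneous contributions: (i) the Hanson--Wright fluctuations of the $s_i$, of order $O(\sqrt{\log(np)/n})$ per index; (ii) the replacement of $\tfrac1n\tr(\Sigma_iQ_{-i})$ and of $\mathbb{E}[Q_{-i}]$ by their deterministic equivalents, which reintroduces $\|\mathbb{E}[Q]-\tilde Q^\delta(D)\|$ (and $\|\mathbb{E}[Q]-\mathbb{E}[Q_{-i}]\|$) on the right-hand side and must therefore be absorbed by a bootstrap --- a crude preliminary bound fed back sharpens to the claimed one, the self-referential coefficient being $<1$ thanks again to $\kappa^2\kappa_D\le1-\varepsilon$; and (iii) the dependence of $D$ on $X$, since $D_i$ is correlated with $(x_i,y_i)$ and the device $D^{(i)}$ only removes this correlation from $Q_{-i}$, so the covariance between the bounded ratio $D_i/s_i$ and $x_iy_i^T$, together with the sum-of-squares control $\sup_i\|D_{-i}-D^{(i)}_{-i}\|_F\le O(1)$ (and $\|X\|,\|Y\|\le O(\sqrt n)$), must be shown to produce genuine smallness rather than the naive $O(1)$ operator-norm bound. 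Summing over the $n$ indices and invoking $p\le O(n)$ (so that $\|\cdot\|_F\le\sqrt p\,\|\cdot\|$) should then deliver $\|\mathcal R\|_F\le O(\log n)$, the surviving logarithm coming from the $\sqrt{\log(np)}$ factors in the Hanson--Wright and norm-control estimates. The two truly delicate points are thus the organization of the leave-one-out expansion so that the self-referential term enters with a subunit coefficient, and the handling of the $D$--$X$ correlation through the mild perturbations $D^{(i)}$.
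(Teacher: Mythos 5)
Your route is the classical self-consistent leave-one-out expansion with a bootstrap, whereas the paper proceeds by a two-step reduction: it first constructs a \emph{deterministic} surrogate diagonal matrix $\bar D$ chosen so that $\bar D/(1+\bar D\bar\Delta)=\mathbb E\bigl[D/(1+\bar\Delta D)\bigr]$ (hence $\tilde Q^\delta(D)=\tilde Q^\delta(\bar D)$), compares $\mathbb E[Q]$ to $\mathbb E[\bar Q]$ with $\bar Q=(I_p-\tfrac1nX\bar DY^T)^{-1}$, and only then invokes a deterministic-equivalent theorem for fixed diagonal matrices (Theorem~\ref{the:concentrtion_resolvente_digonale_fixe}, imported from a companion paper) to handle $\|\mathbb E[\bar Q]-\tilde Q^\delta(\bar D)\|_F\le O(1/\sqrt n)$. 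The point of this detour is exactly the difficulty you flag as item (iii) and leave open: the correlation between $D_i$ and $(x_i,y_i)$. With the surrogate $\bar D$, the difference $\mathbb E[\tr(AQ)]-\mathbb E[\tr(A\bar Q)]$ rewrites \emph{exactly} as a sum of covariances between $y_i^TQA\bar Qx_i(1+\bar\Delta_iD_i)$ and the centered variable $\tfrac{D_i}{1+\bar\Delta_iD_i}-\mathbb E[\tfrac{D_i}{1+\bar\Delta_iD_i}]$, and Cauchy--Schwarz together with the dedicated concentration Lemma~\ref{lem:COncentration_xQAQyd} (observable diameter $O(\log n)$ for that bilinear form, itself resting on Lemmas~\ref{lem:norme_Q_m_i_Q_(i)_x_i} and~\ref{lem:concentration_norme_d_XQAQY}) yields the $O(\log n)$. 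Your proposal names this obstacle but does not supply the device that resolves it; since this is precisely where the theorem's content lies (and where the $\log n$ rate originates), that is a genuine gap rather than a routine omission. Relatedly, your accounting ``sum over $n$ indices, use $\|\cdot\|_F\le\sqrt p\,\|\cdot\|$'' does not obviously produce $O(\log n)$ in Frobenius norm; the paper gets it by testing against $\|A\|_F\le1$ and taking a supremum over $i$ of a single standard deviation, not by summing $n$ error terms.

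On well-posedness of the fixed point, your Banach-contraction argument on a ball $\{\|\delta\|\le O(1)\}$ is under-justified: the derivative of $\delta\mapsto\tfrac1n\tr(\Sigma_i\tilde Q^\delta(D))$ involves $\tfrac1n\sum_j\mathbb E[D_j^2/(1+\delta_jD_j)^2]\,\tr(\Sigma_i\tilde Q^\delta\Sigma_j\tilde Q^\delta)$, and showing this map has plain Lipschitz constant $<1$ is not a consequence of $\kappa^2\kappa_D\le1-\varepsilon$ alone; this is why the paper (following its companion reference) establishes contraction only for the semimetric $d_s(\delta,\delta')=\|\,|\delta-\delta'|/\sqrt{\delta\delta'}\,\|$ rather than for the norm. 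Your step (ii) bootstrap also needs the self-referential coefficient to be strictly subunit, which is again a stability statement of the same nature that you assert but do not prove. In short, the ingredients you list (leave-one-out resolvents, Sherman--Morrison, Hanson--Wright, the good event for $s_i$) are all present in the paper's toolbox, but the two load-bearing ideas --- the surrogate $\bar D$ that converts the $D$--$(x_i,y_i)$ dependence into a controllable covariance, and the semimetric stability argument --- are missing.
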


 \begin{remark}\label{rem:D_m_D_(i)}
   Let us give two examples of the matrices $D^{(i)}$ that one could choose, depending on the cases:
   \begin{itemize}
      \item For all $ i \in[n]$, $D_i =f(x_{i})$ for $f: \mathbb R^{2p} \to \mathbb R$, bounded, then, $D_i$ just depends on $(x_i,y_i)$ so one can merely take $D^{(i)} = D_{-i}$ for all $i \in[n]$.
      \item For the robust regression described by Equation~\ref{eq:point_fixe_robust}, as in \cite{SED21}, we can assume for simplicity\footnote{The bound $\|f\|_\infty \leq O(1)$ is not necessary to set the concentration of $Q$, but it avoids a lot of complications.} $\|f\|_\infty,\|f'\|_\infty,\|f'{}'\|_\infty \leq O(1)$. If we choose $D = \diag(f'(x_i^T\beta))$, then it is convenient to assume $\frac{1}{n}\|f'\|_\infty \|X\|^2 \leq 1-\varepsilon $ (which implies in particular $\frac{1}{n}\|X\|\|D\|\|Y\| \leq 1-\varepsilon$) so that $\beta$ is well defined, being solution of a contractive fixed point equation. One can further introduce $\beta^{(i)} \in \mathbb R^p$, the unique solution to
      \begin{align*}
        \beta^{(i)} = \frac{1}{n}\sum_{\genfrac{}{}{0pt}{2}{1\leq j \leq n}{j \neq i}}f(x_j^T \beta^{(i)}) x_j.
      \end{align*}
      By construction, $\beta^{(i)}$ is independent of $x_i$ and so is:
      \begin{align*}
        D^{(i)}\equiv \diag \left(f'(x_1^T\beta^{(i)}), \ldots, f'(x_{i-1}^T\beta^{(i)}), 0, f'(x_{i+1}^T\beta^{(i)}), \ldots, f'(x_n^T\beta^{(i)}) \right).
      \end{align*}
      Besides $\|D_{-i}-D^{(i)}_{-i} \|_F \leq \|f'{}'\|_\infty \|X_{-i}^T(\beta - \beta^{(i)}) \|_F$. Now, the identities:
      \begin{align*}
        X_{-i}^T\beta = \frac{1}{n}X_{-i}^TXf(X^T\beta)&
        &\text{and}&
        &X_{-i}^T\beta^{(i)} = \frac{1}{n}X_{-i}^TX_{-i}f(X_{-i}^T\beta^{(i)})
      \end{align*}
      (where $f$ is applied entry-wise) imply:
      \begin{align*}
        \|X_{-i}^T(\beta - \beta^{(i)}) \|_F \leq \frac{1}{n} \|f'\|_\infty \|X_{-i}\|^2 \|X_{-i}^T(\beta - \beta^{(i)}) \|_F + \frac{1}{n}f(x_i^T\beta)X_{-i}^Tx_i.
      \end{align*}
      We can then deduce (since $\frac{1}{n} \|f'\|_\infty \|X_{-i}\|^2 \leq 1-\varepsilon$ by hypothesis):
      \begin{align*}
         \|D_{-i}-D^{(i)}_{-i} \|_F \leq \|f'{}'\|_\infty\|X_{-i}^T(\beta - \beta^{(i)}) \|_F \leq \frac{ \|f'{}'\|_\infty}{n\varepsilon}f(x_i^T\beta)X_{-i}^Tx_i \leq O(1).
      \end{align*}

    \end{itemize} 
 \end{remark}

 The preliminary lemmas to the proof of Theorem~\ref{the:concentration_resolvent} are here to prove a similar result to Corollary~\ref{cor:borne_norme_d_XAY}, namely the concentration of $\|X^TQAQX\|_d$, given in Lemma~\ref{lem:concentration_norme_d_XQAQY}. 
 The proof of Theorem~\ref{the:Estimation_resolvante} requires a finer approach presented in Appendix~\ref{app:estimation_resolvent}.

\section*{Conclusion}
With the complexity of nowadays machine learning algorithms, it becomes crucial to devise simple and efficient notations to comprehend their structural logic. For that purpose, the present work provides a systematic approach to comprehend the probabilistic issues involving concentrated vectors, as a model for real data, and their use in statistical learning methods. Indeed, on the one hand, as justified in \cite{SED19}, the very realistic artificial images created by generative adversarial networks are concentrated random vectors by construction: this strongly suggests that most commonly studied databases satisfy our hypotheses. On the other hand, the flexibility of the hypotheses of Theorem~\ref{the:Concentration_produit_de_vecteurs_d_algebre_optimise} and of Theorem~\ref{the:concentration_concentrated_variations} ensures that a wide range of real functionals involved in machine learning problems are concerned by those results. 

As such, in essence, the article provides a catalogue of ready-to-use results for a probabilistic approach of machine learning. To summarize, establishing a concentration inequality on a given random quantity $Y$ generally follows the steps:
\begin{enumerate}
    \item Identify the random vectors $X_1,\ldots, X_m$ (independent or not) upon which $Y$ is built, and verify that $(X_1,\ldots, X_m) \propto \mathcal E_2$;
    \item Bound the variations of $Y$ with a functional $\delta_i$ when $X_i$ varies, $\forall i\in [m]$;
    \item Express the concentration of $\delta_i$, for all $i\in[m]$ and deduce the concentration of $Y$ from Theorem~\ref{the:concentration_concentrated_variations}, or from Theorem~\ref{the:Concentration_produit_de_vecteurs_d_algebre_optimise}, depending on $\delta_i$.
\end{enumerate}

\begin{appendix}

Our work strongly relates to general log-concave settings (very similar to the setting we proposed here: in \cite{ADA11} for Wigner matrices and in \cite{PAJ09} for Wishart matrices) for which the asymptotic behavior of the spectral distribution of random matrices was shown only to depend on the first moments of the entries. In these probabilistic contexts, the random objects behave as if the initial data were Gaussian because the only relevant statistics of the asymptotic behavior is composed of the means and covariances of the data. The laborious Gaussian calculus (with the Stein method as in \cite{PAS05}, possibly combined with Poincaré inequalities as in \cite{CHA07}) then appears as superfluous and can be replaced by concentration of measure arguments in more general settings. This being said, a result from \cite{KLA07, FLE07} establishing a central limit theorem (CLT) for deterministic projections of concentrated random vectors allowed us in a parallel contribution to employ Gaussian inference for the estimation of quantities depending on such projections \cite{SED21}. Nonetheless, in this case, a small number of projections do not satisfy the CLT\footnote{Given a random variable $z_1 \sim \text{Unif}([0,1])$ and $p-1$ i.i.d. random variables $z_2,\ldots, z_p \sim \mathcal N(0,1)$, we know that $Z = (z_1,\ldots, z_p) \sim \mathcal E_2$ but $e_1^T Z \sim \text{Unif}([0,1])$ is not Gaussian. It is stated in \cite{KLA07} that for most $u \in \mathbb S^{p-1}$, $u^TZ$ is quasi-Gaussian (the measure of the complementary set to such $u$ is exponentially decreasing with the maximal distance in infinity norm between the Gaussian CDF and the CDFs of $u^TZ$).}, which restricts the application of the argument.


\section{Proof of the concentration of generalized products}\

 \section{Proofs of resolvent concentration properties}\label{app:concentration_resolvante}
\subsection{Lipschitz concentration of Q}\label{app:concentration_lipschitz_Q}
 \begin{lemma}\label{lem:Q_borne}
   Under the assumptions of Theorem~\ref{the:concentration_resolvent}, $\|Q\| \leq \frac{1}{\varepsilon} \leq O(1)$.
 \end{lemma}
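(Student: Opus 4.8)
The plan is to observe that $Q$ is the resolvent of a matrix whose spectral norm is bounded away from $1$, so it can be expanded in a Neumann series with geometric control. Concretely, I would first estimate
\[
  \left\| \tfrac{1}{n} X D Y^T \right\| \leq \tfrac{1}{n} \|X\| \, \|D\| \, \|Y^T\| \leq \tfrac{1}{n} \cdot \sqrt n \kappa \cdot \kappa_D \cdot \sqrt n \kappa = \kappa^2 \kappa_D \leq 1 - \varepsilon,
\]
using the submultiplicativity of the operator norm together with the hypotheses $\|X\|, \|Y\| \leq \sqrt n \kappa$, $\|D\| \leq \kappa_D$ and $\kappa^2 \kappa_D \leq 1 - \varepsilon$ from Theorem~\ref{the:concentration_resolvent} (recall $\|Y^T\| = \|Y\|$).

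Since $\|\frac{1}{n} X D Y^T\| \leq 1 - \varepsilon < 1$, the operator $I_p - \frac{1}{n} X D Y^T$ is invertible and its inverse $Q$ admits the convergent Neumann expansion $Q = \sum_{k \geq 0} \left( \frac{1}{n} X D Y^T \right)^k$. Taking norms and summing the geometric series gives
\[
  \|Q\| \leq \sum_{k \geq 0} \left\| \tfrac{1}{n} X D Y^T \right\|^k \leq \sum_{k \geq 0} (1 - \varepsilon)^k = \frac{1}{\varepsilon}.
\]
Finally, the assumption $\varepsilon \geq O(1)$ yields $\|Q\| \leq \frac{1}{\varepsilon} \leq O(1)$, which is the claim. (Equivalently, one may argue without the Neumann series: for any unit vector $u$, $\|(I_p - \frac1n XDY^T) u\| \geq \|u\| - \|\frac1n XDY^T\| \geq \varepsilon$, so the smallest singular value of $I_p - \frac1n XDY^T$ is at least $\varepsilon$, whence $\|Q\| \leq 1/\varepsilon$.)

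There is no real obstacle here: the statement is a deterministic norm bound that follows purely from the deterministic hypotheses bundled into Theorem~\ref{the:concentration_resolvent}, and the only thing to be careful about is that all the relevant norms are the spectral (operator) norm so that submultiplicativity applies and $\|D\|$ denotes the largest diagonal entry in absolute value. This uniform bound $\|Q\| \leq 1/\varepsilon$ is exactly what is needed later to make $Q$ a Lipschitz function of $(X, D, Y)$ on the relevant event and to feed into Theorem~\ref{the:Concentration_produit_de_vecteurs_d_algebre_optimise}.
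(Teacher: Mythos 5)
Your proof is correct and is precisely the argument the paper has in mind: the paper states this lemma without proof because it follows immediately from the deterministic hypotheses $\|X\|,\|Y\|\leq\sqrt n\,\kappa$, $\|D\|\leq\kappa_D$, $\kappa^2\kappa_D\leq 1-\varepsilon$ via $\|\tfrac1n XDY^T\|\leq 1-\varepsilon$ and the Neumann series (or, equivalently, the lower bound on the smallest singular value of $I_p-\tfrac1n XDY^T$). Nothing is missing.
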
 
 Then we can show a Lipschitz concentration of $Q$ but with looser observable diameter that the one given by Theorem~\ref{the:concentration_resolvent} (as for $XDX^T$, we get better concentration speed in the linear concentration framework).
 \begin{lemma}\label{lem:concentration_faible_resolvante}
   Under the hypotheses of Theorem~\ref{the:concentration_resolvent}: 
   \begin{align*}
      \left(Q, \frac{1}{\sqrt n} QX\right) \propto \mathcal E_2 \quad \text{in} \ \ (\mathcal M_{p,n}, \| \cdot \|_F).
    \end{align*} 
 \end{lemma}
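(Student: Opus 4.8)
The plan is to exhibit each of $Q$, $\frac{1}{\sqrt n}Y^TQ$ and $\frac{1}{\sqrt n}QX$ as an $O(1)$-Lipschitz function of the tuple $(X,D,Y)$ on the (deterministic) set where this tuple takes its values, and then to conclude through Lemma~\ref{lem:concentration_sous_ensemble}. By the hypotheses $\|X\|,\|Y\|\leq\sqrt n\,\kappa$ and $\|D\|\leq\kappa_D$ of Theorem~\ref{the:concentration_resolvent}, $(X,D,Y)$ is supported in
\begin{align*}
  \mathcal A \equiv \left\{(A,\Delta,B)\in\mathcal M_{p,n}\times\mathcal D_n\times\mathcal M_{p,n}\ :\ \|A\|,\|B\|\leq\sqrt n\,\kappa,\ \|\Delta\|\leq\kappa_D\right\},
\end{align*}
and on $\mathcal A$ the map $(A,\Delta,B)\mapsto Q(A,\Delta,B)\equiv\left(I_p-\frac1n A\Delta B^T\right)^{-1}$ is well defined with $\|Q\|\leq\varepsilon^{-1}$ (Lemma~\ref{lem:Q_borne}, from a Neumann series since $\frac1n\|A\|\|\Delta\|\|B\|\leq\kappa^2\kappa_D\leq 1-\varepsilon$). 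Because $(X,D,Y)\propto\mathcal E_2$ and each component carries (at least) the Frobenius norm (recall $\|\cdot\|_d=\|\cdot\|_F$ on $\mathcal D_n$, Example~\ref{exe:semi_norme_diag}), Lemma~\ref{lem:concentration_sous_ensemble} guarantees that for every $1$-Lipschitz real observation $g$, $g\bigl(\Phi(X,D,Y)\bigr)\propto\mathcal E_2$ as soon as $\Phi$ is $O(1)$-Lipschitz on $\mathcal A$; since the target triple is endowed with the $\ell^\infty$ product norm, it is enough to bound the Lipschitz constant of each of the three maps.

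For $Q$ I would use the resolvent identity $Q_1-Q_2=Q_1(M_1-M_2)Q_2$ with $M(A,\Delta,B)=\frac1n A\Delta B^T$, together with the telescoping
\begin{align*}
  M_1-M_2=\frac1n\left[(A_1-A_2)\Delta_1B_1^T+A_2(\Delta_1-\Delta_2)B_1^T+A_2\Delta_2(B_1-B_2)^T\right].
\end{align*}
The point that keeps the constants bounded despite the $\frac1n$ normalisation is to estimate each Frobenius norm by the submultiplicativity rule $\|UVW\|_F\leq\|U\|\,\|V\|_F\,\|W\|$, routing the operator norms onto the matrices $A_i,B_i$ (bounded by $\sqrt n\,\kappa$ on $\mathcal A$) and keeping the Frobenius norm on the ``increment'' factor; with $\|\Delta_i\|\leq\kappa_D$ and $\|Q_i\|\leq\varepsilon^{-1}$ this yields
\begin{align*}
  \|Q_1-Q_2\|_F\leq O(1/\sqrt n)\bigl(\|A_1-A_2\|_F+\|B_1-B_2\|_F\bigr)+O(1)\,\|\Delta_1-\Delta_2\|_F,
\end{align*}
i.e. $(A,\Delta,B)\mapsto Q$ is $O(1)$-Lipschitz on $\mathcal A$. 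For $\frac{1}{\sqrt n}Y^TQ$ and $\frac{1}{\sqrt n}QX$ one adds the direct dependence of the prefactor, e.g.
\begin{align*}
  \frac{1}{\sqrt n}\left(B_1^TQ_1-B_2^TQ_2\right)=\frac{1}{\sqrt n}(B_1-B_2)^TQ_1+\frac{1}{\sqrt n}B_2^T(Q_1-Q_2),
\end{align*}
whose first term is $O(1/\sqrt n)\|B_1-B_2\|_F$ (using $\|Q_1\|\leq\varepsilon^{-1}$) and whose second term is $O(1)$-Lipschitz by the previous bound on $\|Q_1-Q_2\|_F$ combined with $\frac{1}{\sqrt n}\|B_2\|\leq\kappa$; the map $\frac{1}{\sqrt n}QX$ is handled symmetrically. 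Assembling the three estimates shows that $\Phi:(X,D,Y)\mapsto\bigl(Q,\frac{1}{\sqrt n}Y^TQ,\frac{1}{\sqrt n}QX\bigr)$ is $O(1)$-Lipschitz on the support $\mathcal A$ of $(X,D,Y)$, and Lemma~\ref{lem:concentration_sous_ensemble} then delivers $\Phi(X,D,Y)\propto\mathcal E_2$.

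The only genuinely delicate point is the bookkeeping of norms in the $\Delta$-variation: the naive bound $\|A_2(\Delta_1-\Delta_2)B_1^T\|_F\leq\|A_2\|_F\,\|\Delta_1-\Delta_2\|\,\|B_1\|_F$ would cost an extra factor $n$ (since $\|A\|_F,\|B\|_F$ are only controlled by $O(n)$ on $\mathcal A$), so one must instead put the operator norms on $A_2$ and $B_1$ and measure the diagonal increment in Frobenius norm; a secondary, purely notational precaution is that the concentration of $(X,D,Y)$ must be invoked in a norm at least as strong as those appearing in the estimates, which is automatic here since $\|\cdot\|\leq\|\cdot\|_F$ on $\mathcal M_{p,n}$ and $\|\cdot\|_d=\|\cdot\|_F$ on $\mathcal D_n$.
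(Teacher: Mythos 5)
Your proposal is correct and follows essentially the same route as the paper: bound $\|Q\|\leq\varepsilon^{-1}$ on the support, use the resolvent identity coordinate by coordinate, place operator norms on the $O(\sqrt n)$-bounded matrices $X,Y,D$ and the Frobenius norm on the increment so that the $\frac1n$ normalisation absorbs the growth, and conclude that the map is $O(1)$-Lipschitz on the support (hence concentrated via the locally-Lipschitz argument). The paper only writes out the computation for $Q$ and states that the tuple is treated identically, which is exactly what your handling of $\frac{1}{\sqrt n}Y^TQ$ and $\frac{1}{\sqrt n}QX$ supplies.
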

   \begin{proof}
   Let us just show the concentration of the resolvent, the concentration of $\frac{1}{\sqrt n} QX$ is treated the same way thanks to the bound $\|X\|\leq O(\sqrt n)$. If we note $\phi(X,D) = Q$ and we introduce $X' \in \mathcal M_{p,n}$ and $D' \in \mathcal D_n$, satisfying $\|X'\|\leq \kappa_X \sqrt n$ and $\|D'\| \leq \kappa_D$ as $X,D$, we can bound:
   \begin{align*}
     \left\Vert \phi(X,D) - \phi(X',D)\right\Vert_F
     &\leq \frac{1}{n}\left\Vert \phi(X,D) (X-X') DX^T \phi(X',D)\right\Vert_F \\
     &\hspace{1cm}+ \frac{1}{n}\left\Vert \phi(X,D) X' D(X-X')^T \phi(X',D)\right\Vert_F \ \ 
     \leq \frac{2\kappa_X\kappa_D}{\varepsilon^2\sqrt n} \left\Vert X- X'\right\Vert_F,
   \end{align*}
   thanks to the hypotheses and Lemma~\ref{lem:Q_borne} given above. The same way, we can bound:
   \begin{align*}
     \left\Vert \phi(X,D) - \phi(X,D')\right\Vert_F\leq \frac{\kappa_X^2}{\varepsilon^2} \left\Vert D- D'\right\Vert_F.
   \end{align*}
   Therefore, as a $O(1)$-Lipschitz transformation of $(X,D)$, $Q \propto \mathcal E_2$.
   \end{proof}

 \subsection{Control on the dependency on $(x_i,y_i)$}\label{app:borne_Qx_i_Qy_i}
   
 The dependence between $Q$ and $x_i$ prevent us from bounding straightforwardly $\|Qx_i\|$ with Lemma~\ref{lem:Q_borne} and the hypotheses on $x_i$. We can still disentangle this dependence thanks to some notations and classical random matrix identities. Let us denote:
 \begin{align*}
   Q_{-i} = \left(I_p - \frac{1}{n}X_{-i}^TDX_{-i}^T\right)^{-1}&
   &\text{and}&
   &Q^{(i)}_{-i} = \left(I_p - \frac{1}{n}X_{-i}^TD^{(i)}X_{-i}^T\right)^{-1}.
 \end{align*}
 We can indeed bound:
 \begin{align}\label{eq:borne_Q_m_i_(i)_x_i}
   \|\mathbb E[Q_{-i}^{(i)}x_i]\| \leq \|\mathbb E[Q_{-i}^{(i)}]\mathbb E[x_i]\| \leq O(1),
 \end{align}
 and we even have interesting concentration properties that will be important later: 
 \begin{lemma}\label{lem:concentration_Q_m_i_x_i}
   Under the assumptions of Theorem~\ref{the:concentration_resolvent}:
   \begin{align*}
  &Q^{(i)}_{-i}x_i,
      \frac{1}{\sqrt n}X_{-i}^TQ^{(i)}_{-i}x_i\in O(1) \pm \mathcal E_2&
   &\text{and}&
   &\frac{1}{n}x_{i}^TQ^{(i)}_{-i}x_i\in O(1) \pm \mathcal E_2 \left(\frac{1}{\sqrt{n}}\right) + \mathcal E_1 \left(\frac{1}{n}\right).
    \end{align*} 
    Besides, for any deterministic $A \in \mathcal{M}_{p}$:
    \begin{align*}
      x_i^TQ_{-i}^{(i)}AQ_{-i}^{(i)}x_i \in O(\|A\|_*) + \mathcal E_2(\|A\|_F) + \mathcal E_1(\|A\|).
    \end{align*}
 \end{lemma}
 \begin{proof}
   Considering $u\in \mathbb R^p$, deterministic such that $\|u\|\leq 1$, we can bound thanks to the independence between $Q^{(i)}_{-i}$ and $x_i$:
   \begin{align*}
     \left\vert u^TQ^{(i)}_{-i}x_i - \mathbb E[u^TQ^{(i)}_{-i}x_i]\right\vert \leq \left\vert u^TQ^{(i)}_{-i} \left(x_i - \mathbb E[x_i]\right)\right\vert + \left\vert u^T \left(Q^{(i)}_{-i}- \mathbb E[Q^{(i)}_{-i}]\right) \mathbb E[x_i] \right\vert .
   \end{align*}
   Therefore, the concentrations $x_i\propto \mathcal E_2$ and $Q^{(i)}_{-i}\propto \mathcal E_2$ given in Lemma~\ref{lem:concentration_faible_resolvante} imply that there exist two constants $C,c>0$ such that $\forall t>0$ such that if we note $\mathcal A_{-i}$, the sigma algebra generated by $X_{-i}$ and $X_{-i}$ (it is independent with $x_i$):
   \begin{align*}
     &\mathbb P \left(\left\vert u^TQ^{(i)}_{-i}x_i - \mathbb E[u^TQ^{(i)}_{-i}x_i]\right\vert \geq t\right) \\
     &\hspace{0.5cm}\leq \mathbb E \left[\mathbb P \left( \left\vert u^TQ^{(i)}_{-i} \left(x_i - \mathbb E[x_i]\right)\right\vert \geq \frac{t}{2} \ | \ \mathcal A_{-i}\right) \right]
     + \mathbb P \left(\left\vert u^T \left(Q^{(i)}_{-i}- \mathbb E \left[Q^{(i)}_{-i}\right]\right) \mathbb E[x_i] \right\vert \geq \frac{t}{2}\right)\\
     &\hspace{0.5cm}\leq \mathbb E \left[Ce^{(t/c\|Q^{(i)}_{-i}\|)^2} \right] + Ce^{(t/c\|\mathbb E[x_i]\|)^2} \leq C' e^{-t^2/c'},
   \end{align*}
   for some constants $C',c' >0$, thanks to the bounds $\|\mathbb E[x_i]\|\leq O(1)$ given in the assumptions and $\|Q^{(i)}_{-i}\| \leq O(1)$ given by Lemma~\ref{lem:Q_borne}.

  The linear concentration of $X_{-i}^TQ_{-i}^{(i)}x_i/\sqrt n$ is proven thanks to the concentration $X_{-i}^TQ_{-i}^{(i)}/\sqrt n \propto \mathcal E_2$ given in Lemma~\ref{lem:concentration_faible_resolvante}. 

The concentration of $\frac{1}{\sqrt n}x_{i}^TQ^{(i)}_{-i}x_i$is proven similarly with the bound:
   \begin{align*}
    \left\vert \frac{1}{\sqrt n}x_{i}^TQ^{(i)}_{-i}x_i - \mathbb E \left[ \frac{1}{\sqrt n}x_{i}^TQ^{(i)}_{-i}x_i \right] \right\vert
    &\leq \left\vert \frac{1}{\sqrt n}x_{i}^TQ^{(i)}_{-i}x_i - \frac{1}{\sqrt n}\tr  \left( \Sigma_i Q^{(i)}_{-i} \right) \right\vert \\
    &\hspace{1cm}+ \left\vert \frac{1}{\sqrt n}\tr  \left( \Sigma_i Q^{(i)}_{-i} \right) - \mathbb E \left[ \frac{1}{\sqrt n}\tr  \left( \Sigma_i Q^{(i)}_{-i} \right) \right] \right\vert,
   \end{align*}
   and the concentrations $Q^{(i)}_{-i} \propto \mathcal E_2$ in $(\mathcal{M}_{p}, \|\cdot\|_F)$ and $ x_i^T Ax_i \in  \tr(\Sigma_i A) \pm\mathcal E_2(\sqrt n\|A\|) \pm \mathcal E_1(\|A\|)$ for any deterministic $A\in \mathcal{M}_{p} $.

  Finally, the concentration of $x_i^TQ_{-i}^{(i)}AQ_{-i}^{(i)}x_i$ is also shown the same way, knowing that $M \mapsto \tr(\Sigma_iMAM)$ is $O(\|A\|_F)$-Lipschitz on $\{\|M\|\leq \frac{1}{\varepsilon}\}$ and for any $B \in \mathcal{M}_{p}$, $x_i^TBx_i \in O(\|B\|_*) \pm \mathcal E_2(\|B\|_F) + E_1(\|B\|)$ thanks to Theorem~\ref{pro:concentration_lineaire_YAX} and the fact that $\mathbb E[x_i^TBx_i] = \tr(\Sigma_iB) \leq \|\Sigma_i\|\|B\|_* \leq \|B\|_*$ thanks to the hypotheses on $X$.

 \end{proof}
 The link between $Qx_i$ and $Q_{-i}x_i$ is made possible thanks to classical Schur identities:
\begin{align}\label{eq:lien_q_qj_schur}
  &Q=Q_{-i} -\frac{1}{n}\frac{D_iQ_{-i}x_ix_i^TQ_{-i}}{1 + D_i \Delta_i}&
  &\text{and}&
  &Qx_i=\frac{Q_{-i}x_i}{1+ D_i \Delta_i},
\end{align}
where we noted $\Delta_i \equiv \frac1nx_i^TQ_{-i}x_i$. 
 The link between $Q_{-i}x_i$ and $Q_{-i}^{(i)}x_i$ is made thanks to:
 \begin{lemma}\label{lem:norme_Q_m_i_Q_i_x_i} 
   Under the hypotheses of Theorem~\ref{the:concentration_resolvent}, for all $i\in[n]$: 
   \begin{align*}
     \|Q_{-i}x_i - Q^{(i)}_{-i}x_i\|, \|Q_{-i}x_i - Q^{(i)}_{-i}x_i\| \in O \left( \sqrt{\log n} \right) \pm \mathcal E_2 \left( \sqrt{\log n} \right).
   \end{align*}
 \end{lemma}
 Let us first prove a Lemma of independent interest:
 \begin{lemma}\label{lem:concentration_norm_infty_YQx}
 Under the hypotheses of Theorem~\ref{the:concentration_resolvent}, for all $i\in[n]$
 \begin{align*}
   \left\Vert Q_{-i}^{(i)} x_i \right\Vert_\infty \in O(\sqrt {\log p}) \pm \mathcal E_2(\sqrt {\log p})&
   &\text{and}&
   &\left\Vert \frac{1}{\sqrt n} X_{-i}^T Q_{-i}^{(i)} x_i \right\Vert_\infty
   \in O(\sqrt {\log n}) \pm \mathcal E_2(\sqrt {\log n}).
 \end{align*}
   
 \end{lemma}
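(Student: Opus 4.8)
The plan is to combine the independence structure with the norm-degree machinery of Section~\ref{sec:concentration_norm}. Because $D^{(i)}$ is constructed to be independent of $(x_i,y_i)$ and $X_{-i},Y_{-i}$ do not involve the $i$-th column, the matrix $M \equiv Y_{-i}^TQ^{(i)}_{-i}\in\mathcal M_{n,p}$ is independent of $x_i$, and Lemma~\ref{lem:concentration_Q_m_i_x_i} already provides the linear concentration of the vector $Z \equiv \frac{1}{\sqrt n}Mx_i = \frac{1}{\sqrt n}Y_{-i}^TQ^{(i)}_{-i}x_i\in\mathbb R^n$: one has $Z\in\tilde Z\pm\mathcal E_2(\sigma)$ in $(\mathbb R^n,\|\cdot\|)$ with $\tilde Z=\mathbb E[Z]$, $\|\tilde Z\|\le O(1)$ and $\sigma\le O(1)$. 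It then remains to upgrade this Euclidean linear concentration into a concentration of the scalar observation $\|Z\|_\infty$ at the announced rate $\sqrt{\log n}$.

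First I would observe that linear concentration transfers from $\|\cdot\|$ to $\|\cdot\|_\infty$ on $\mathbb R^n$, since the dual unit ball $\{f:\|f\|_1\le 1\}$ of $\|\cdot\|_\infty$ is contained in that of the Euclidean norm; hence $Z\in\tilde Z\pm\mathcal E_2(\sigma)$ holds in $(\mathbb R^n,\|\cdot\|_\infty)$ as well. Applying Proposition~\ref{pro:tao_conc_exp} in the (reflexive, finite-dimensional) space $(\mathbb R^n,\|\cdot\|_\infty)$, for which $\eta_{\|\cdot\|_\infty}=\log n$ by Example~\ref{exe:norm_degree}, then yields
\begin{align*}
\|Z-\tilde Z\|_\infty\propto\mathcal E_2\bigl(\sqrt{\log n}\,\sigma\bigr)=\mathcal E_2\bigl(\sqrt{\log n}\bigr)\qquad\text{and}\qquad\mathbb E\bigl[\|Z-\tilde Z\|_\infty\bigr]\le O\bigl(\sqrt{\log n}\bigr).
\end{align*}
To finish, the reverse triangle inequality gives $\bigl|\,\|Z\|_\infty-\|\tilde Z\|_\infty\,\bigr|\le\|Z-\tilde Z\|_\infty$; since $\|\tilde Z\|_\infty\le\|\tilde Z\|\le O(1)$ is deterministic, $\|Z\|_\infty$ concentrates around $\|\tilde Z\|_\infty$ at rate $\sqrt{\log n}$ and $\mathbb E[\|Z\|_\infty]\le\|\tilde Z\|_\infty+\mathbb E[\|Z-\tilde Z\|_\infty]\le O(\sqrt{\log n})$; absorbing the $O(\sqrt{\log n})$ gap between $\|\tilde Z\|_\infty$ and $\mathbb E[\|Z\|_\infty]$ into the constants (a routine pivot change, as in the equivalences of Definition~\ref{def:concentrated_sequence}) gives $\|Z\|_\infty\in O(\sqrt{\log n})\pm\mathcal E_2(\sqrt{\log n})$, as claimed.

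The argument is essentially routine given the available tools; the only point needing attention is the second step, where one must invoke the norm degree of $(\mathbb R^n,\|\cdot\|_\infty)$ — equal to $\log n$, not $n$ — so that the Euclidean linear concentration of $Z$ upgrades to a sup-norm concentration with the sharp logarithmic rate rather than a vacuous $\sqrt n$ one, together with the minor bookkeeping between the deterministic equivalent $\tilde Z$ and $\mathbb E[\|Z\|_\infty]$. The genuinely essential structural input, entering only through the citation of Lemma~\ref{lem:concentration_Q_m_i_x_i}, is the independence of $Q^{(i)}_{-i}$ from $(x_i,y_i)$, which is the whole reason for introducing the auxiliary matrices $D^{(i)}$.
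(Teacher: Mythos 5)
Your proof is correct and follows essentially the same route as the paper's: both rest on the linear concentration $\frac{1}{\sqrt n}Y_{-i}^TQ_{-i}^{(i)}x_i \in O(1)\pm\mathcal E_2$ from Lemma~\ref{lem:concentration_Q_m_i_x_i} (with the $O(1)$ bound on the deterministic equivalent coming from the independence of $Q_{-i}^{(i)}$ and $x_i$), transferred to $(\mathbb R^n,\|\cdot\|_\infty)$ and fed into Proposition~\ref{pro:tao_conc_exp} with norm degree $\log n$. The paper's two-line proof is exactly this argument; you merely spell out the norm-transfer and pivot-change bookkeeping that it leaves implicit.
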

 \begin{proof}
   The control on the variation is given by Lemma~\ref{lem:concentration_Q_m_i_x_i} ($\|\cdot\|_\infty \leq \|\cdot \|_F)$ and the bound on the expectation is a consequence of Proposition~\ref{pro:tao_conc_exp} and the bound:
\begin{align}
  \frac{1}{\sqrt n} \left\Vert \mathbb E \left[ X_{-i}^T Q_{-i}^{(i)} x_i \right]\right\Vert_\infty \leq \frac{1}{\sqrt n} \left\Vert \mathbb E \left[ X_{-i}^T Q_{-i}^{(i)} \right]  \mathbb E \left[  x_i \right]\right\Vert \leq O(1).
\end{align}
One can show the same way that $\left\Vert \mathbb E[Q_{-i}^{(i)} x_i] \right\Vert_\infty \leq O(1)$
 \end{proof}
 \begin{proof}[Proof of Lemma~\ref{lem:norme_Q_m_i_Q_i_x_i}]
 Let us bound directly:
   \begin{align*}
    \left\Vert \left(Q_{-i} - Q_{-i}^{(i)}\right) x_i\right\Vert
    &\leq  \left\Vert \frac{1}{n}  Q_{-i} X_{-i} (D_{-i}^{(i)} - D_{-i})  X_{-i}^T Q_{-i}^{(i)} x_i \right\Vert\\
    &\leq \frac{1}{n} \| Q_{-i} X_{-i}\| \|D_{-i}^{(i)} - D_{-i}\|_F \|  X_{-i}^T Q_{-i}^{(i)} x_i \|_\infty 
    \ \leq O \left(\frac{1}{\sqrt n} \|  X_{-i}^T Q_{-i}^{(i)} x_i \|_\infty \right).
\end{align*}
We can then conclude thanks to Lemma~\ref{lem:concentration_norm_infty_YQx}.
 \end{proof}

 We end this subsection with two fast consequences to Lemma~\ref{lem:norme_Q_m_i_Q_i_x_i} that will find some use.
\begin{lemma}\label{lem:COncentration_xQy}
   Under the hypotheses of Theorem~\ref{the:concentration_resolvent}:
   \begin{align*}
     \Delta_i \equiv \frac{1}{n}x_i^TQ_{-i}x_i  \in \bar \Delta_i   \pm \mathcal E_2 \left(\sqrt{\frac{\log n}{n}}\right) + \mathcal E_1 \left(\frac{\sqrt{\log n}}{n}\right),
   \end{align*}
   where we noted $\bar \Delta_i \equiv \frac{1}{n} \mathbb E[x_i^TQ_{-i}x_i]$ (recall that $\forall i\in[n]: \ |\bar \Delta_i| \leq \frac{\kappa^2}{\varepsilon}$.
 \end{lemma}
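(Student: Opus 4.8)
The deterministic bound $|\bar\Delta_i|\le \kappa^2/\varepsilon$ is immediate from $\|x_i\|,\|y_i\|\le \|X\|\vee\|Y\|\le \kappa\sqrt n$ and $\|Q_{-i}\|\le 1/\varepsilon$ (the analogue of Lemma~\ref{lem:Q_borne} for $Q_{-i}$), so the real content is the concentration. The obstruction is that $Q_{-i}$ depends on $(x_i,y_i)$ through $D$, so $\Delta_i$ is \emph{not} a Lipschitz observation of an $\mathcal E_2$-concentrated vector; following Appendix~\ref{app:borne_Qx_i_Qy_i}, the plan is to first replace $Q_{-i}$ by $Q_{-i}^{(i)}$, which is independent of $(x_i,y_i)$, prove concentration of $\Delta_i^{(i)}\equiv\frac1n y_i^TQ_{-i}^{(i)}x_i$, and then control the error $\Delta_i-\Delta_i^{(i)}$.

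\emph{Step 1: concentration of $\Delta_i^{(i)}$ via a ``Hanson--Wright with random matrix'' argument.} Let $\mathcal A_{-i}$ be the $\sigma$-algebra generated by $(X_{-i},Y_{-i},D^{(i)})$; by hypothesis $(x_i,y_i)$ is independent of $\mathcal A_{-i}$, so conditionally on $\mathcal A_{-i}$ the pair $(x_i,y_i)$ still satisfies $(x_i,y_i)\propto\mathcal E_2$ with $\|\mathbb E[x_i]\|,\|\mathbb E[y_i]\|\le O(1)$, while $A:=Q_{-i}^{(i)}$ is a fixed matrix with $\|A\|\le 1/\varepsilon=O(1)$, hence $\|A\|_F\le\sqrt p\,\|A\|\le O(\sqrt p)$. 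Viewing $x_i,y_i$ as elements of $\mathcal M_{p,1}$ and applying Proposition~\ref{pro:concentration_lineaire_YAX} (whose constants are uniform in $A$ and in $i$), one gets, conditionally on $\mathcal A_{-i}$,
\[
  \Delta_i^{(i)} \in \mathbb E\bigl[\Delta_i^{(i)}\mid \mathcal A_{-i}\bigr] \pm \mathcal E_2\!\left(\tfrac{\sqrt p}{n}\right)+\mathcal E_1\!\left(\tfrac1n\right).
\]
Since $(x_i,y_i)\perp\mathcal A_{-i}$, the conditional mean equals $\frac1n\tr(Q_{-i}^{(i)}\Sigma_i)$ with $\Sigma_i=\mathbb E[x_iy_i^T]$, which is a $\frac1n\|\Sigma_i\|_F$-Lipschitz observation of $Q_{-i}^{(i)}\propto\mathcal E_2$ (Lemma~\ref{lem:concentration_faible_resolvante}); by Remark~\ref{rem:E_x_i_y_i_F_borne}, $\|\Sigma_i\|_F\le O(\sqrt p)$, so this conditional mean lies in $\mathbb E[\Delta_i^{(i)}]\pm\mathcal E_2(\sqrt p/n)$. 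Integrating the conditional bound over $\mathcal A_{-i}$, using $p\le O(n)$ (whence $\sqrt p/n\le O(1/\sqrt n)\le O(\sqrt{\log n/n})$) and the domination $\mathcal E_2(\sigma)\subset\mathcal E_1(\sigma)$, yields $\Delta_i^{(i)}\in\mathbb E[\Delta_i^{(i)}]\pm\mathcal E_1(\sqrt{\log n/n})$ with constants independent of $i$.

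\emph{Step 2: from $\Delta_i^{(i)}$ to $\Delta_i$.} Bound $|\Delta_i-\Delta_i^{(i)}|=\frac1n|y_i^T(Q_{-i}-Q_{-i}^{(i)})x_i|\le \frac1n\|y_i\|\,\|(Q_{-i}-Q_{-i}^{(i)})x_i\|\le \frac{\kappa}{\sqrt n}\|(Q_{-i}-Q_{-i}^{(i)})x_i\|$, using $\|y_i\|\le\kappa\sqrt n$. By Lemma~\ref{lem:norme_Q_m_i_Q_(i)_x_i} the right-hand side is dominated by a variable $W_i\in O(\sqrt{\log n/n})\pm\mathcal E_2(\sqrt{\log n/n})$; hence $\mathbb E|\Delta_i-\Delta_i^{(i)}|\le O(\sqrt{\log n/n})$ (so $|\bar\Delta_i-\mathbb E\Delta_i^{(i)}|\le O(\sqrt{\log n/n})$ too), and $\mathbb P(|\Delta_i-\Delta_i^{(i)}|\ge t)\le\mathbb P(W_i\ge t)$ is an $\mathcal E_1(\sqrt{\log n/n})$-tail for $t$ above $O(\sqrt{\log n/n})$ (again by $\mathcal E_2(\sigma)\subset\mathcal E_1(\sigma)$) and trivial otherwise. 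Combining with Step~1 through $|\Delta_i-\bar\Delta_i|\le|\Delta_i-\Delta_i^{(i)}|+|\Delta_i^{(i)}-\mathbb E\Delta_i^{(i)}|+|\mathbb E\Delta_i^{(i)}-\bar\Delta_i|$, a union bound, and Lemma~\ref{lem:diametre_observable_pivot} (to absorb the $O(\sqrt{\log n/n})$ shift into $\bar\Delta_i$), one obtains $\Delta_i\in\bar\Delta_i\pm\mathcal E_1(\sqrt{\log n/n})$ with constants uniform in $i$.

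\emph{Main difficulty.} The delicate point is the decoupling step: one must argue carefully that $(x_i,y_i)$ is independent of the whole $\sigma$-algebra $\mathcal A_{-i}$ generated by $(X_{-i},Y_{-i},D^{(i)})$ (not merely of each ingredient separately), so that the conditional application of Proposition~\ref{pro:concentration_lineaire_YAX} is legitimate and its constants, along with those coming from Lemmas~\ref{lem:concentration_faible_resolvante} and~\ref{lem:norme_Q_m_i_Q_(i)_x_i}, stay uniform in $i$. The rest — tracking the two regimes ($\mathcal E_2$ from the fluctuations of $Q_{-i}^{(i)}$ and of the conditional quadratic form, $\mathcal E_1$ from the residual Hanson--Wright term and from the $\log n$ loss in the decoupling error) — is routine.
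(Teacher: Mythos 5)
Your proof is correct and follows the same overall skeleton as the paper's (which is essentially forced by the dependence of $Q_{-i}$ on $(x_i,y_i)$ through $D$): replace $Q_{-i}$ by the decoupled $Q_{-i}^{(i)}$, concentrate $\Delta_i^{(i)}=\frac1n y_i^TQ_{-i}^{(i)}x_i$, and absorb the error via Lemma~\ref{lem:norme_Q_m_i_Q_(i)_x_i}, which is exactly where the $\sqrt{\log n}$ comes from in both arguments. Where you diverge is the key step, the concentration of $\Delta_i^{(i)}$. The paper gets it by combining Lemma~\ref{lem:concentration_Q_m_i_x_i} (the linear concentration $Q_{-i}^{(i)}x_i\in O(1)\pm\mathcal E_2$ together with $\|\mathbb E[Q_{-i}^{(i)}x_i]\|\leq O(1)$) with Theorem~\ref{the:Concentration_produit_de_vecteurs_d_algebre_optimise} applied to the bilinear product of $y_i$ and $Q_{-i}^{(i)}x_i$, yielding $\mathcal E_2(\sqrt p/n)+\mathcal E_1(1/n)\subset\mathcal E_1(1/\sqrt n)$. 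You instead condition on $\mathcal A_{-i}$ and apply the matricial Hanson--Wright inequality (Proposition~\ref{pro:concentration_lineaire_YAX}) to $(x_i,y_i)$ with the frozen matrix $Q_{-i}^{(i)}$, then separately control the fluctuation of the conditional mean $\frac1n\tr(Q_{-i}^{(i)}\Sigma_i)$ as a $\frac1n\|\Sigma_i\|_F$-Lipschitz observation of $Q_{-i}^{(i)}\propto\mathcal E_2$; both channels give $\mathcal E_2(\sqrt p/n)$ plus an $\mathcal E_1(1/n)$ tail, hence the same bound. Your route buys a more explicit treatment of the decoupling (the paper's conditioning is hidden inside the proof of Lemma~\ref{lem:concentration_Q_m_i_x_i}) and avoids having to argue the joint Lipschitz concentration of the pair $(y_i,Q_{-i}^{(i)}x_i)$ needed to invoke the product theorem; the paper's route is shorter because it reuses machinery already established. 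One caveat you rightly flag, and which the paper shares implicitly: one needs $(x_i,y_i)$ independent of the \emph{triple} $(X_{-i},Y_{-i},D^{(i)})$ jointly, not just of each component, which holds in the intended constructions of $D^{(i)}$ (Remark~\ref{rem:D_m_D_(i)}) but is slightly stronger than the literal statement of the hypothesis.
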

 \begin{proof}
   We know from Lemma~\ref{lem:concentration_Q_m_i_x_i} that $\frac{1}{n}x_i^TQ_{-i} ^{(i)}x_i  \in O \left( 1 \right) \pm \mathcal E_2 \left(\frac{1}{\sqrt{n}}\right) + \mathcal E_1 \left(\frac{1}{n}\right)$. One can then conclude with Lemma~\ref{lem:norme_Q_m_i_Q_i_x_i}.
 \end{proof}

 \begin{lemma}\label{lem:concentratin_AQx}
 Under the hypotheses of Theorem~\ref{the:concentration_resolvent} and given $A\in \mathcal{M}_{p}$ such that $\|A\|_F\leq 1$:
  \begin{align*}
       &\|AQ_{-i}x_i\| \in O \left( \sqrt {\log n} \|A\|_F \right) \pm \mathcal E_2 \left( \sqrt {\log n}\|A\| \right)
     \end{align*}   
 \end{lemma}
 \begin{proof}
   The concentration of $\|AQ_{-i}x_i\|$ is a consequence of Lemma~\ref{lem:norme_Q_m_i_Q_i_x_i} and the fact that:
   \begin{align*}
     \mathbb E[\|AQ_{-i}^{(i)}x_i\|]\leq \sqrt{\mathbb E[x_i^T Q_{-i}^{(i)}AAQ_{-i}^{(i)}x_i]} \leq \frac{1}{\varepsilon}\|\mathbb E[x_ix_i^T]\|^{1/2} \|A\|_F \leq O(\|A\|_F).
   \end{align*}
 \end{proof}

\subsection{Proof of Theorem~\ref{the:concentration_resolvent}}\label{app:preuve_concentration_x_iQAQx_i}

Let us first show the concentration of the pseudo-quadratic forms $x_i^TQAQx_i$ for some $A\in \mathcal{M}_{p}$.

 \begin{lemma}\label{lem:COncentration_xQAQyd}
   Under the hypotheses of Theorem~\ref{the:concentration_resolvent}, given a deterministic matrix $A \in \mathcal M_{p,n}$ such that $\|A\|_F\leq 1$:
   \begin{align*}
       x_i^TQ_{-i}AQ_{-i}x_i \in O \left(  \|A\|_*+ \|A\|_F\log n \right) \pm \mathcal E_2 \left(\|A\|_F\log n\right) + \mathcal E_1 \left(\|A\|\log n\right) 
   \end{align*}
 \end{lemma}
 \begin{proof}
 We already know from Lemma~\ref{lem:concentration_Q_m_i_x_i} that:
 \begin{align*}
   x_i^TQ_{-i}^{(i)}AQ_{-i}^{(i)}x_i \in O(\|A\|_*) + \mathcal E_2(\|A\|_F) + \mathcal E_1(\|A\|)
 \end{align*}
 we then use the identity:
 \begin{align*}
   x_i^TQ_{-i}AQ_{-i}x_i = x_i^TQ_{-i}^{(i)}AQ_{-i}^{(i)}x_i + 2x_i^T(Q_{-i} -Q_{-i}^{(i)})AQ_{-i}^{(i)}x_i + x_i^T(Q_{-i} -Q_{-i}^{(i)})A(Q_{-i}- Q_{-i}^{(i)})x_i, 
 \end{align*}
 the bounds:
 \begin{align*}
   \left\{ \begin{aligned}
     &\left\vert 2x_i^T(Q_{-i} -Q_{-i}^{(i)})AQ_{-i}^{(i)}x_i \right\vert \leq \|x_i^T(Q_{-i} -Q_{-i}^{(i)})\| \|AQ_{-i}^{(i)}x_i\|\\
     &\left\vert x_i^T(Q_{-i} -Q_{-i}^{(i)})A(Q_{-i}- Q_{-i}^{(i)})x_i \right\vert \leq \|x_i^T(Q_{-i} -Q_{-i}^{(i)})\|^2 \|A\|,
   \end{aligned}\right.
 \end{align*}
 and the concentrations:
 \begin{itemize}
     \item $ \|AQ_{-i}x_i\| \in O(\sqrt {\log n} \|A\|_F) \pm \mathcal E_2(\sqrt {\log n}\|A\|)$ thanks to Lemmas~\ref{lem:concentratin_AQx},
     \item $\left\Vert  \frac{1}{\sqrt n}x_i^TQ_{-i}X_{-i} \right\Vert_\infty \in O(\sqrt {\log n}) \pm \mathcal E_2(\sqrt {\log n})$ thanks to Lemma~\ref{lem:concentration_norm_infty_YQx},
  \end{itemize}
  to show the concentration of $x_i^TQ_{-i}AQ_{-i}x_i$ with Theorem~\ref{the:concentration_concentrated_variations}.
\end{proof}

\begin{lemma}\label{lem:concentration_norme_d_XQAQY}
   Under the hypotheses of Theorem~\ref{the:concentration_resolvent}, given a deterministic matrix $A \in \mathcal M_{p,n}$, $B\in \mathcal{M}_{p,n}$ such that $\|A\|_*\leq 1$, $\|B\|_*\leq 1$:
   \begin{align*}
     X^TQAQX  \propto \mathcal E_2 \left( ( \|A\|_* \log^{\frac{3}{2}}n\right) \pm \mathcal E_1 \left(\|A\|_F\log^{\frac{3}{2}} n\right) + \mathcal E_{\frac{2}{3}} \left(\|A\|\log^{\frac{3}{2}} n\right)&
     &\text{in} \ \ (\mathcal{M}_{n},  \|\cdot\|_d).
   \end{align*}
   and $\mathbb E[\|X^TQAQX\|_d] \leq O \left( \sqrt n  \log^{\frac{3}{2}}n \|A\|_*\right)$.
\end{lemma}

To control the variation of the upper quantities, one first needs the following lemma
\begin{lemma}\label{lem:control_UQY}
  Under the hypotheses of Theorem~\ref{the:concentration_resolvent},
  for any deterministic matrix $U\in \mathcal{M}_{p}$ such that $\|U\|\leq 1$:
  \begin{align*}
    \left\{\begin{aligned}
      &\|U^TQX \|_\infty\in O(\sqrt{\log n}) \pm \mathcal E_2(\sqrt{\log n})\\
    &\left\Vert X^TQ'{}'_{-i}AQ_{-i}X \right\Vert_{\infty} \in O \left( ( \|A\|_*\log^{\frac{3}{2}} n \right) \pm \mathcal E_2 \left(\|A\|_F\log^{\frac{3}{2}} n\right) + \mathcal E_1 \left(\|A\|\log^{\frac{3}{2}} n\right)
    \end{aligned}\right.
  \end{align*}
\end{lemma}
\begin{proof}
  If $p\leq n$, one can replace $U$ with a matrix $U' \in \mathcal{M}_{p, n}$ that satisfies $\|UQX\|_\infty = \|U'QX\|_\infty$, we thus assume from now on that $U\in \mathcal{M}_{p, n}$. Noting the columns of $U$: $u_1,\ldots, u_n$, we can bound:
  \begin{align*}
    \|U^TQxi \|_\infty 
    &= \sup_{i,j} u_i^TQX \leq  \sup_{i,j\in[p]} \frac{u_j^TQ_{-i}x_i}{1+D_i\delta_i}\\
    &\leq \sup_{i,j\in[p]} |u_j^TQ_{-i}^{(i)}x_i| + \sup_{i\in[n]} \left\Vert Q_{-i}x_i - Q_{-i}^{(i)}x_i \right\Vert
  \end{align*}
  and we deduce the concentration of $\|UQX\|_\infty$ since we know:
  \begin{itemize}
     \item from Lemma~\ref{lem:concentration_Q_m_i_x_i} and Proposition~\ref{pro:tao_conc_exp} that $\sup_{i,j\in[p]} |u_j^TQ_{-i}^{(i)}x_i| \in O(\sqrt{\log (n^2)}) \pm \mathcal E_2(\sqrt{\log (n^2)})$, and of course, $\log n^2 \leq O(\log n)$,
     \item from Lemma~\ref{lem:norme_Q_m_i_Q_i_x_i} that $\left\Vert Q_{-i}x_i - Q_{-i}^{(i)}x_i \right\Vert\in O \left( \sqrt{\log n} \right) \pm \mathcal E_2 \left( \sqrt{\log n} \right)$.
   \end{itemize}  

   For the second result, we recall first that for any $x,y\in \mathbb R^p$ and $A\in \mathcal{M}_{p}$ nonnegative symmetric:
   \begin{align*}
       2\left\vert x^TAy \right\vert \leq \left\vert x^TAx \right\vert + \left\vert y^TAy \right\vert.
    \end{align*}  
    Therefore, we rather bound:
   \begin{align*}
     \sup_{i \in [n]} \left\vert x_i^TQAQx_i \right\vert
     \leq \sup_{i \in [n]} \left\vert x_i^TQ_{-i}AQ_{-i}x_i \right\vert,
   \end{align*}
   (we know that $x_i^TQ = \frac{x_i^TQ_{-i}}{1+D_i\Delta_i}$ and $D_i\Delta_i \geq 0)$. 
   We can then conclude thanks to Proposition~\ref{pro:tao_conc_exp} and Lemma~\ref{lem:COncentration_xQAQyd} that states that $x_i^TQ_{-i}AQ_{-i}x_i \in O \left(  \|A\|_*+ \|A\|_F\log n \right) \pm \mathcal E_2 \left(\|A\|_F\log n\right) + \mathcal E_1 \left(\|A\|\log n\right)$.
\end{proof}
\begin{proof}[Proof of Lemma~\ref{lem:concentration_norme_d_XQAQY}]
  Let us introduce $X',D'$, respectively an independent copy of $X$ and $D$ note $\Phi(X,D) = \|X^TQAQX\|_d$, $Q' = (I_p - X'DX'{}^T)^{-1}$ and $Q'{}' = (I_p - XD'X{}^T)^{-1}$.
  One can first bound:
  \begin{align*}
     \left\vert \Phi(X,D) - \Phi(X',D) \right\vert
     &\leq \|(X- X')^TQAQX\|_d + \frac{1}{n}\|X'{}^TQ(X -X')DX^TQ'AQX\|_d \\
     &\hspace{0.5cm}+ \frac{1}{n} \|X'{}^TQX'D(X -X')^TQ'AQX\|_d +\frac{1}{n} \|X'{}^TQ'AQ(X -X')DX^TQ'X\|_d\\ 
     &\hspace{0.5cm}+  \frac{1}{n}\|X'{}^TQ'AQX'D(X -X')^TQ'X\|_d + \|X'{}^TQ'AQ'(X -X')\|_d.
   \end{align*} 
   Some of the term are treated similarly, we will therefore just bound the first and the second one.
  Inspiring from the proof of Corollary~\ref{cor:borne_norme_d_XAY}, we decompose again $A = U \Lambda V^T$ with $\Lambda= \diag(\lambda) \in \mathcal D_n$ and $U,V \in \mathcal O_{p}$, noting $\check X \equiv V^TQX$, $\check X' \equiv V^TQX'$ and $\check Y \equiv U^TQX$ we have the identity:
  \begin{align*}
    \|(X- X')^TQAQX\|_d 
    &\leq \sup_{\genfrac{}{}{0pt}{2}{D \in \mathcal D_n}{\|D\|_F\leq 1}} \tr(D(\check X - \check X')^T \Lambda \check X)\\ &\leq \sup_{\|d\|\leq 1} d^T ((\check X - \check X') \odot \check Y) \lambda \leq \|A\|_F \|(\check X - \check X') \odot \check Y\|\leq \|A\|_F \| X - X'\|_F \|\check Y \|_\infty  
  \end{align*}
  and we know from Lemma~\ref{lem:control_UQY} that $\|\check Y \|_\infty\in O(\sqrt{\log (pn)}) \pm \mathcal E_2(\sqrt{\log (pn)})$.

   The second terms bounds similarly, noting this time $\check X \equiv \frac{1}{n}V^TQ'XDXQX'$, $\check X' \equiv \frac{1}{n}V^TQ'XDX'QX'$ and $\check Y \equiv U^TQX$. Indeed, one can bound:
   \begin{align*}
     \frac{1}{n}\|X'{}^TQ(X -X')DX^TQ'AQX\|_d
    &\leq \|(\check X - \check X') \odot \check Y\|\leq \| X - X'\|_F \|\check Y \|_\infty
   \end{align*}

   Let us now bound the variations on $D$:
   \begin{align*}
     \left\vert \Phi(X,D) - \Phi(X,D') \right\vert
     &\leq  \frac{1}{n} \left\Vert X{}^TQX(D - D')X^TQ'{}'AQX \right\Vert_d + \frac{1}{n} \left\Vert X^TQ'{}'AQX(D- D')Q'{}'X \right\Vert_d.
   \end{align*}
   The two terms are similar, we therefore just bound:
   \begin{align*}
     \frac{1}{n} \left\Vert X{}^TQX(D - D')X^TQ'{}'AQX \right\Vert_d
     \leq \|D - D'\|_F \sup_{i,j} \left\vert x_i^TQ'{}'AQx_j \right\vert
   \end{align*}
   But we know from Lemma~\ref{lem:control_UQY} that:
      \begin{align*}
     \left\Vert X^TQ'{}'_{-i}AQ_{-i}X \right\Vert_{\infty} \in O \left( \|A\|_*\log ^{\frac{3}{2}}n \right) \pm \mathcal E_2 \left(\|A\|_F\log^{\frac{3}{2}} n\right) + \mathcal E_1 \left(\|A\|\log^{\frac{3}{2}} n\right)
   \end{align*}
   One can then conclude on the concentration with Theorem~\ref{the:concentration_concentrated_variations}.

   For the control on the diagonal norm of the expectation, one can merely bound:
   \begin{align*}
     \mathbb E[\|X^TQAQX\|_d]
     &\leq \sqrt {\mathbb E \left[ \sum_{i=1}^n x_i^TQAQx_i \right]}
     = \sqrt {\mathbb E \left[ \sum_{i=1}^n \frac{x_i^TQ_{-i}AQ_{-i}x_i}{(1+D_i\Delta_i)^2} \right]} \\
     &\leq \sqrt { \sum_{i=1}^n \mathbb E \left[x_i^TQ_{-i}AQ_{-i}x_i \right]} \leq O \left( \sqrt n \log^{\frac{2}{3}}\|A\|_* \right).
   \end{align*}
\end{proof}

\begin{proof}[Proof of Theorem~\ref{the:concentration_resolvent}]
  
  Let us consider $A\in \mathcal{M}_{p,n}$ such that $\|A\|_*\leq 1$ and let us note $\phi(X,D)  = \tr(A Q)$. We abusively work with $X,D$ and independent copies $X',D'$  satisfying $\|X\|,\|X'\| \leq \sqrt n \kappa$ and $\|D\|,\|D'\| \leq \kappa_D$ as if they were deterministic variables, and we note $Q'_X \equiv \phi(X,D)$, $Q'_D \equiv \phi(X,D')$. Let us bound the variations
  \begin{align*}
    \left\vert \phi(X,D) - \phi(X,D)\right\vert = \frac{1}{n} \left\vert \tr \left(AQ (X-X') DX Q'_X\right)\right\vert \leq \frac{\kappa\kappa_D}{\varepsilon^2\sqrt n} \|X-X'\|_F.
  \end{align*}
  We can also bound as in the proof of Proposition~\ref{pro:concentration_lineaire_XDY}:
  \begin{align*}
    \left\vert \phi(X,D) - \phi(X,D')\right\vert \leq \frac{1}{n} \left\Vert X Q'_D AQX\right\Vert_d \left\Vert D-D'\right\Vert_F.
  \end{align*}
  and we know from Lemma~\ref{lem:concentration_norme_d_XQAQY} that:
  \begin{align*}
    \left\Vert Y Q'_D AQX\right\Vert_d \in O \left(\sqrt n \log^{\frac{3}{2}}n\right) \pm \mathcal E_{\frac{2}{3}} \left(\log^{\frac{3}{2}} n\right)
  \end{align*}
  (actually Lemma~\ref{lem:concentration_norme_d_XQAQY} gives the concentration of $\left\Vert Y Q AQX\right\Vert_d$, but the proof remains the same if one replaces one of the $Q$ with $Q_D'$, for a diagonal matrix $D'$, independent with $D$).
  We can then conclude on the concentration of $Q$ applying Theorem~\ref{the:concentration_concentrated_variations}.

\end{proof}
\subsection{Proof of Theorem~\ref{the:Estimation_resolvante}}\label{app:estimation_resolvent}


The existence of the deterministic parameters $\delta \in \mathcal D_n$ such that:
\begin{align*}
   \delta = \frac{1}{n} \diag_{i\in [n]} \tr (\Sigma_i \tilde Q^\delta(D))&
   & \left( \text{recall that } \tilde Q^\delta(D) \equiv  \left( I_p - \frac{1}{n} \sum_{i=1}^n \mathbb E \left[ \frac{D_i}{1+ D_i \delta_i} \right] \Sigma_i \right)^{-1}\right)
 \end{align*} is proved in a similar way as in \cite[Theorem 1]{louart2021spectral} thanks to a semimetric $d_s$ defined for any $\delta, \delta' \in \mathcal D_n(\mathbb R^+)$ as $d_s(\delta, \delta') = \|\frac{|\delta - \delta|'}{\sqrt{\delta\delta'}}\|$.
To prove Theorem~\ref{the:Estimation_resolvante}, we are going to invoke a result also taken from \cite{louart2021spectral} providing a deterministic equivalent for resolvent of the form $Q = (I_p - \frac{1}{n}XX^T)^{-1}$ where the columns of $X$ are independent but possibly non identically distributed (that concerns in particular the case of matrices $(I_p - \frac{1}{n}X \tilde DX^T)^{-1}$ for deterministic diagonal matrices $\tilde D$ as stated below).
\begin{theorem}[\cite{louart2021spectral}, Theorem 4, Corollary 1]\label{the:concentrtion_resolvente_digonale_fixe}
  In the regime $p\leq O(n)$, given two random matrices $X = (x_1,\ldots, x_n), Y = (y_1,\ldots, y_n) \in \mathcal M_{p,n}$ such that $X,Y \propto \mathcal E_2$, $O(1) \leq \sup_{i\in[n]}\|\mathbb E[x_i] \|, \sup_{i\in[n]}\|\mathbb E[y_i] \|\leq O(1)$ and all the couples $(x_i,y_i)$ are independent, for any deterministic diagonal matrix $ \tilde D$ satisfying $\| \tilde D\| \leq O(1)$ and $\|X \tilde D X^T\| \leq 1 - \varepsilon$, the equation $\delta = \frac{1}{n}\tr (\Sigma_i \tilde Q^\delta(\tilde D)$ admits a unique solution $\delta$ and we can bound:
  \begin{align*}
    \left\Vert \left( I_p + \frac{1}{n}X \tilde D  X^T \right)^{-1} - \tilde Q^{\delta}( \tilde D) \right\Vert_F \left( \frac{1}{\sqrt n} \right).
  \end{align*}
\end{theorem}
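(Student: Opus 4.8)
The statement has two parts: well-posedness of the fixed-point equation defining $\delta$, and the deterministic-equivalent estimate for $Q\equiv (I_p-\tfrac1n X\tilde D Y^T)^{-1}$. The plan is to prove the first by a contraction argument and the second by the usual leave-one-out (Schur complement) expansion of $\mathbb E[Q]$, closed self-consistently, together with the concentration of $Q$ around its mean. For the well-posedness, put $\Gamma(\delta)\equiv\tfrac1n\diag_{i\in[n]}\tr(\Sigma_i\tilde Q^\delta(\tilde D))$ on the positive cone $\mathcal D_n(\mathbb R^+)$. Using $\|\tilde D\|\leq O(1)$ and $\tfrac1n\|X\tilde D Y^T\|\leq 1-\varepsilon$, one first checks that $\|\tfrac1n\sum_i\tfrac{\tilde D_i}{1+\tilde D_i\delta_i}\Sigma_i\|\leq 1-\varepsilon$ in the relevant range of $\delta$, hence $\|\tilde Q^\delta(\tilde D)\|\leq 1/\varepsilon$ and $\Gamma$ maps a fixed compact convex subset of $\mathcal D_n(\mathbb R^+)$ into itself, so Brouwer yields a fixed point; uniqueness follows by showing $\Gamma$ is a strict contraction for the semimetric $d_s(\delta,\delta')=\big\|\,|\delta-\delta'|/\sqrt{\delta\delta'}\,\big\|$, exactly as in the argument recalled just before the statement (essentially \cite[Theorem~1]{louart2021spectral}); the subcriticality $\kappa^2\kappa_D\leq 1-\varepsilon$ is what forces the contraction factor below $1$.

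For the estimate, observe first that, since $\tilde D$ is deterministic and $\tfrac1n\|X\tilde D Y^T\|\leq 1-\varepsilon$, one has $\|Q\|\leq 1/\varepsilon$, and $Q$ is an $O(1/\sqrt n)$-Lipschitz function of $(X,Y)$ on the event $\{\|X\|,\|Y\|\leq O(\sqrt n)\}$, which has overwhelming probability since $X,Y\propto\mathcal E_2$ and $p\leq O(n)$ (this is Lemma~\ref{lem:Q_borne} and Lemma~\ref{lem:concentration_faible_resolvante} specialised to deterministic $\tilde D$). Hence $Q\propto\mathcal E_2$, and every observation $\tr(AQ)$ with $\|A\|_*\leq 1$ concentrates around its mean with observable diameter $O(1/\sqrt n)$, so it suffices to estimate $\mathbb E[Q]-\tilde Q$ with $\tilde Q\equiv\tilde Q^\delta(\tilde D)$. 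Since $\tilde Q$ is deterministic, the resolvent identity gives
\begin{align*}
  \mathbb E[Q]-\tilde Q &= \mathbb E\big[Q(\tilde Q^{-1}-Q^{-1})\tilde Q\big]\\
  &= \frac1n\sum_{i=1}^n\mathbb E\big[\tilde D_i\,Qx_iy_i^T\big]\tilde Q \;-\; \frac1n\sum_{i=1}^n\alpha_i\,\mathbb E[Q]\,\Sigma_i\tilde Q,
\end{align*}
with $\alpha_i=\tilde D_i/(1+\tilde D_i\delta_i)$. Into the first sum I would insert the Schur identity \eqref{eq:lien_q_qj_schur}, $Qx_i=Q_{-i}x_i/(1+\tilde D_i\Delta_i)$ with $\Delta_i=\tfrac1ny_i^TQ_{-i}x_i$ and $Q_{-i}$ independent of $(x_i,y_i)$, so that $\mathbb E[\tilde D_iQx_iy_i^T]=\mathbb E\big[\tfrac{\tilde D_i}{1+\tilde D_i\Delta_i}Q_{-i}x_iy_i^T\big]$.

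The rest is a chain of substitutions, each error controlled by a concentration estimate: replace $\Delta_i$ by $\bar\Delta_i\equiv\tfrac1n\tr(\Sigma_iQ_{-i})$ (Hanson--Wright, Proposition~\ref{pro:concentration_lineaire_YAX}, conditionally on $(X_{-i},Y_{-i})$); $\bar\Delta_i$ by $\tfrac1n\tr(\Sigma_i\mathbb E[Q_{-i}])$ (Lipschitz concentration of $\tr(\Sigma_iQ_{-i})$); $\mathbb E[Q_{-i}]$ by $\mathbb E[Q]$ (rank-one bound $\|Q-Q_{-i}\|=O(1/n)$); and finally $\tfrac1n\tr(\Sigma_i\mathbb E[Q])$ by $\delta_i$, via the fixed-point equation, once $\mathbb E[Q]\approx\tilde Q$. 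After these replacements $\mathbb E[\tilde D_iQx_iy_i^T]\tilde Q\approx\alpha_i\mathbb E[Q]\Sigma_i\tilde Q$, the two sums above cancel to leading order, and one is left with $\mathbb E[Q]-\tilde Q=\tfrac1n\sum_iE_i$ with $\|E_i\|=O(1/\sqrt n)$ uniformly; summing gives $\|\mathbb E[Q]-\tilde Q\|=O(1/\sqrt n)$ in operator norm, hence the stated control of $\tr(A(Q-\tilde Q))$ for $\|A\|_*\leq 1$, and combining with the concentration of $Q$ around $\mathbb E[Q]$ closes the argument.

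The part I expect to be hardest is the \emph{self-consistent closure}, since $\mathbb E[Q]\approx\tilde Q$ is used inside the very computation meant to establish it. I would circumvent it by first running the display above with $\delta_i$ replaced by the random $\Delta_i$, getting an exact identity $\mathbb E[Q]-\tilde Q^{\hat\delta}(\tilde D)=\tfrac1n\sum_iE_i$ with $\hat\delta_i=\tfrac1n\tr(\Sigma_i\mathbb E[Q])$ and $\|E_i\|=O(1/\sqrt n)$, and then using the contraction of $\Gamma$ in $d_s$ from the first part to convert $\|\tilde Q^{\hat\delta}(\tilde D)-\tilde Q^\delta(\tilde D)\|$ into $d_s(\hat\delta,\delta)$ and close a Gronwall-type inequality. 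A secondary technicality is keeping the denominators $1+\tilde D_i\Delta_i$ bounded away from $0$, which is handled as in \cite{louart2021spectral} by exploiting positivity when $\tilde D\succeq 0$, or by the spectral-parameter trick alluded to in the footnote of Theorem~\ref{the:concentration_resolvent}.
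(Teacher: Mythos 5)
First, a point of comparison that matters here: the paper does not prove this statement at all --- it is imported wholesale from \cite{louart2021spectral} (Theorem 4 and Corollary 1 there), and the only in-paper hint at its proof is the remark just above the statement that existence and uniqueness of $\delta$ follow from a contraction for the semimetric $d_s(\delta,\delta')=\big\|\,|\delta-\delta'|/\sqrt{\delta\delta'}\,\big\|$. Your architecture --- Brouwer plus a $d_s$-contraction for the fixed point, then a leave-one-out (Schur) expansion of $\mathbb E[Q]$ closed by a stability argument for the map $\Gamma$ --- is exactly the route of the cited reference and mirrors the paper's own proof of Theorem~\ref{the:Estimation_resolvante} in Appendix~\ref{app:estimation_resolvent}, so the strategy is the expected one.

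There are, however, concrete quantitative gaps in the second half. (i) The claim $\|Q-Q_{-i}\|=O(1/n)$ is false: by the Schur identity, $Q-Q_{-i}=-\frac1n\,\tilde D_i\,Q_{-i}x_iy_i^TQ_{-i}/(1+\tilde D_i\Delta_i)$, and since $\|x_i\|,\|y_i\|=O(\sqrt p)$ this rank-one perturbation has operator (hence Frobenius) norm $O(p/n)=O(1)$, not $O(1/n)$; the standard fix is never to bound it in norm but to test it against a matrix $A$ with $\|A\|_F\leq 1$ and use the concentration of the quadratic forms $y_i^TQ_{-i}AQ_{-i}x_i$ (the analogues of Lemmas~\ref{lem:concentration_xiQAQyi} and~\ref{lem:concentration_norme_d_XQAQY}). (ii) For the same reason, several of your substitution errors $E_i$ are individually $O(1)$ or larger (e.g.\ $\|Q_{-i}x_iy_i^T\|_F=O(p)$ times an $O(1/\sqrt n)$ fluctuation of $\Delta_i$ is $O(\sqrt n)$ when $p\sim n$), so the term-by-term triangle inequality on $\frac1n\sum_i\|E_i\|$ cannot deliver $O(1/\sqrt n)$; the gain comes only from the cancellation visible after tracing against a fixed test matrix, which your sketch does not carry through. (iii) You end with an operator-norm bound and assert the ``stated control'', but the theorem asserts a \emph{Frobenius}-norm bound, and an operator bound of $O(1/\sqrt n)$ only yields $O(\sqrt{p/n})=O(1)$ in Frobenius norm, so the last step does not close as written. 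The fixed-point part and the idea of the self-consistent closure via the $d_s$-contraction are fine as sketched.
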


Let us first explain why the resolvent $\bar Q' \equiv (I_p - \frac{1}{n}X \mathbb E[D] X^T)^{-1}$ is not a relevant a deterministic equivalent of $Q$.
\begin{remark}\label{rem:first_resolvent}
Considering a deterministic vector $u\in \mathbb{R}^{p}$ such that $\|u\|\leq 1$:
\begin{align*}
      \left\vert \mathbb E[u^TQ u] -\mathbb E[u^T\bar Q'u]\right\vert
    &=\frac{1}{n}\left\vert \mathbb E \left[u^TQ X\left( D - \mathbb E[D] \right)X^T\bar Q' u \right]\right\vert\\
    &= \frac{1}{n}\sum_{i=1}^n \left\vert \mathbb E \left[ x_i^T Quu^T \bar Q' xi \left( D_i - \mathbb E[D_i]\right)\right]\right\vert\\
    &= \frac{1}{n}\sum_{i=1}^n \left\vert \mathbb E \left[ \left( x_i^T Quu^T\bar Q' xi - \mathbb E[x_i^T Quu^T\bar Q' xi] \right) D_i\right]\right\vert
\end{align*}
We deduce from the identity: 
\begin{align}\label{eq:identity_xQAQx_xQmiAQmix}
  x_i^T QA\bar Q' xi = \frac{x_i^T Q_{-i}A\bar Q' xi}{ 1+D_i\Delta_i}
\end{align}
and the concentrations $D_i \propto O(1) \pm \mathcal E_2$ given by our hypotheses, $x_i^T Q_{-i}A\bar Q' xi \in O \left(  \log n \right) \pm  \mathcal E_1 \left(\log n\right)$ given by Lemma~\ref{lem:COncentration_xQAQyd} and $\Delta_i \in \mathcal O(1) \pm \mathcal E_1 \left(\sqrt{\log n/n}\right)$ given by Lemma~\ref{lem:COncentration_xQy} that:
\begin{align*}
  \mathbb E \left[ \left( x_i^T Quu^T\bar Q xi - \mathbb E[x_i^T Quu^T\bar Q' xi] \right)^2\right] \leq (\log n)^{2}.
\end{align*}
The H\"older inequality then allows us to conclude ($\mathbb E[D_i^2]\leq O(1)$):
\begin{align*}
    \left\Vert \mathbb E[Q] - \tilde Q^{\delta(\mathbb E[D] )}(\mathbb E[D] ) \right\Vert \leq O \left( \log^{2}n \right).
  \end{align*}
This result has no interest because we already knew that $ \|\mathbb E[Q]\| \leq O(1)$.

What happens is that the concentration of $x_i^TQAQx_i$ for general $A$ is not good and can not be improved from the concentration of $x_iQ_{-i}A\bar Q'x_i$ and the identity~\eqref{eq:identity_xQAQx_xQmiAQmix}. Indeed $D_i$ only has an observable diameter of order $O(1)$ and $x_iQ_{-i}A\bar Q'x_i$ can possibly be of size $O(\log n\sqrt n)$ when one only assumes that $\|A\|_F\leq 1$.
\end{remark}

To get a relevant estimation of $\mathbb E[Q]$, one needs the following lemma.

\begin{lemma}\label{lem:COncentration_xQAQyd2}
   Under the hypotheses of Theorem~\ref{the:concentration_resolvent}, given a deterministic matrix $A \in \mathcal M_{p,n}$ such that $\|A\|_F\leq 1$:
   \begin{align*}
     x_i^TQAQx_i (1+D_i\bar \Delta_i) \propto  \mathcal E_1 \left(\log^{\frac{3}{2}} n\right) + \mathcal E_{\frac{1}{2}} \left(\frac{\log^{\frac{3}{2}} n}{\sqrt n}\right).
   \end{align*}
 \end{lemma}
 be careful that the mean is taken on $\Delta_i$ and not on $D_i$, all the subtlety is here since it does not seem possible to show such a tight concentration for $x_i^TQAQx_i (1+\mathbb E[D_i]\bar \Delta_i)$ as explained above.
 \begin{proof}
 The concentration of:
 \begin{align*}
   x_i^TQAQx_i (1+D_i\Delta_i) = x_i^TQ_{-i}AQx_i= x_i^TQAQ_{-i}x_i\in O \left(\sqrt n\log n\right) \pm \mathcal E_2 \left(\log n\right) + \mathcal E_1 \left(\|A\|\log n\right)
 \end{align*}
 is proven the same way as in the proof of Lemma~\ref{lem:COncentration_xQAQyd}, taking advantage of the quasi independence between $x_i$ and $Q_{-i}$.

   To show the concentration of $x_i^TQAQx_i (1+D_i\bar \Delta_i)$, note that:
  \begin{align*}
    \left\vert x_i^TQAQx_i (1+D_i\bar \Delta_i) - x_i^TQAQx_i (1+D_i \Delta_i) \right\vert 
    &\leq \kappa_D \left\vert x_i^TQAQx_i \right\vert \left\vert \Delta_i - \bar \Delta_i \right\vert,
  \end{align*}
  one can then conclude thanks to the concentration of $x_i^TQAQx_i$ and $\Delta_i \in \bar \Delta_i \pm \mathcal E_1(\sqrt {\log n/n})$ given in Lemma~\ref{lem:COncentration_xQy}.
 \end{proof}
We have now all the elements to estimate $\mathbb E[Q] = \mathbb E[(I_p - \frac{1}{n}XDX^T)^{-1}]$.
\begin{proof}[Proof of Theorem~\ref{the:Estimation_resolvante}]
Let us introduce the resolvent $\bar Q \equiv (I_p - \frac{1}{n}X \bar D X^T)^{-1}$ where we defined
\begin{align*}
  \bar D \equiv \left( \bar\Delta - \left( \mathbb E \left[ \left( I_p + D\bar\Delta \right)^{-1} \right] \right)^{-1} \right)^{-1}.
\end{align*}
As will be seen later, this elaborated definition is taken for $\bar D$ to satisfy the following relation:
\begin{align*}
  \frac{\bar D}{1 + \bar D \bar \Delta} = \mathbb E \left[ \frac{D}{I_p+\bar \Delta D} \right],
\end{align*}
it implies in particular that $\tilde Q^\delta(D) = \tilde Q^\delta(\bar D)$ for any $\delta \in \mathcal D_n$.
Let us then consider a deterministic matrix $A\in \mathcal M_{p}$, such that 
$\|A\|_F \leq 1$ and bound:
  \begin{align*}
    &\left\vert \mathbb E[\tr(AQ)] -\mathbb E[\tr(A\bar Q)]\right\vert\\
    &\hspace{1cm}= \frac{1}{n}\sum_{i=1}^n \left\vert \mathbb E \left[ x_i^T QA \bar Q xi \bar \Delta_i^{-1}\left(\bar \Delta_i D_i + 1 - (\bar \Delta_i \bar D_i +1)\right)\right]\right\vert\\
    &\hspace{1cm}= \frac{1}{n}\sum_{i=1}^n \left\vert \mathbb E \left[ x_i^T QA \bar Q xi \bar \Delta_i^{-1} \left( \bar \Delta_i D_i + 1 \right)\left( \bar \Delta_i \bar D_i + 1 \right)\left(\frac{1}{\bar \Delta_i \bar D_i +1} - \frac{1}{\bar \Delta_i D_i + 1} \right) \right]\right\vert\\
    &\hspace{1cm}= \frac{1}{n}\sum_{i=1}^n \left\vert \mathbb E \left[ x_i^T QA \bar Q xi \left( \bar \Delta_i D_i + 1 \right)\left( \bar \Delta_i \bar D_i + 1 \right)\left(\frac{D_i}{\bar \Delta_i D_i + 1}  - \mathbb E \left[ \frac{D_i}{\bar \Delta_i D_i +1} \right]\right) \right]\right\vert\\
    &\hspace{1cm}= \frac{1}{n}\sum_{i=1}^n \left\vert \mathbb E \left[ \left( x_i^T QA \bar Q xi \left( \bar \Delta_i D_i + 1 \right) - \mathbb E \left[ x_i^T QA \bar Q xi \left( \bar \Delta_i D_i + 1 \right) \right] \right)\frac{D_i\left( \bar \Delta_i \bar D_i + 1 \right)}{\bar \Delta_i D_i + 1}  \right]\right\vert\\
    &\hspace{1cm}= \kappa_D \left( \frac{\kappa_D\kappa^2}{\varepsilon} + 1 \right)\sup_{i\in [n]} \sqrt{\mathbb E \left[ \left( x_i^T QA \bar Q xi \left( \bar \Delta_i D_i + 1 \right) - \mathbb E \left[ x_i^T QA \bar Q xi \left( \bar \Delta_i D_i + 1 \right) \right] \right)^2  \right]}\\
    &\hspace{1cm}\leq O \left( \log^{\frac{3}{2}} n \right),
  \end{align*}
  thanks to Lemma~\ref{lem:COncentration_xQAQyd2}.

  Theorem~\ref{the:concentrtion_resolvente_digonale_fixe} then allows us to state that given $\delta \in \mathcal D_n$ solution to:
  \begin{align*}
    \delta = \frac{1}{n} \tr \left( \Sigma_i \tilde Q^\delta(\bar D) \right)&
    & \left( = \frac{1}{n} \tr \left( \Sigma_i \tilde Q^\delta(D) \right) \right),
  \end{align*}
  one has the estimation:
  \begin{align*}
    \left\Vert \mathbb E[ Q] - \tilde Q^\delta(D) \right\Vert_F
    \leq \left\Vert \mathbb E[ Q] -  \mathbb E[ \bar Q] \right\Vert_F+\left\Vert \mathbb E[\bar Q] - \tilde Q^\delta(D) \right\Vert_F
    \leq O \left( \log^{\frac{3}{2}} n + \frac{1}{\sqrt n}   \right)
    \leq O \left( \log^{\frac{3}{2}} n\right).
  \end{align*}

  
\end{proof}

\end{appendix}

\bibliographystyle{alpha}
\bibliography{C:/Users/cosme/Documents/Travail/These/Biblio/biblio}

\newcommand{\etalchar}[1]{$^{#1}$}
\begin{thebibliography}{EKBB{\etalchar{+}}13}

\bibitem[ABW17]{adamczak2017moment}
Rados{\l}aw Adamczak, Witold Bednorz, and Pawe{\l} Wolff.
\newblock Moment estimates implied by modified log-sobolev inequalities.
\newblock {\em ESAIM: Probability and Statistics}, 21:467--494, 2017.

\bibitem[Ada11]{ADA11}
Radoslaw Adamczak.
\newblock On the marchenko-pastur and circular laws for some classes of random
  matrices with dependent entries.
\newblock {\em Electronic Journal of Probability}, 16:1065--1095, 2011.

\bibitem[Ada15]{ADA14}
Radosław Adamczak.
\newblock A note on the hanson-wright inequality for random vectors with
  dependencies.
\newblock {\em Electronic Communications in Probability}, 20(72):1--13, 2015.

\bibitem[AS94]{aida1994moment}
Shigeki Aida and Daniel Stroock.
\newblock Moment estimates derived from poincar{\'e} and logarithmic sobolev
  inequalities.
\newblock {\em Mathematical Research Letters}, 1(1):75--86, 1994.

\bibitem[AW15]{adamczak2015concentration}
Rados{\l}aw Adamczak and Pawe{\l} Wolff.
\newblock Concentration inequalities for non-lipschitz functions with bounded
  derivatives of higher order.
\newblock {\em Probability Theory and Related Fields}, 162(3):531--586, 2015.

\bibitem[BLM13]{BLM13}
Stéphane Boucheron, Gabor Lugosi, and Pascal Massart.
\newblock {\em Concentration Inequalities: a Nonasymptotic Theory of
  Independence}.
\newblock Oxford University Press, 2013.

\bibitem[Cha17]{CHA07}
Sourav Chatterjee.
\newblock Fluctuations of eigenvalues and second order poincaré inequalities.
\newblock {\em Probability Theory and Related Fields}, 143:1--40, 2017.

\bibitem[DKT20]{DEN20}
Zeyu Deng, Abla Kammoun, and Christos Thrampoulidis.
\newblock A model of double descent forhigh-dimensional binary linear
  classification.
\newblock {\em ICASSP'2020 IEEE International Conference on Acoustics, Speech
  and Signal Processing}, 2020.

\bibitem[EKBB{\etalchar{+}}13]{ELK13}
Noureddine El~Karoui, Derek Bean, Peter~J Bickel, Chinghway Lim, and Bin Yu.
\newblock On robust regression with high-dimensional predictors.
\newblock {\em Proceedings of the National Academy of Sciences},
  110(36):14557--14562, 2013.

\bibitem[FGP07]{FLE07}
B.~Fleury, O.~Guédon, and G.~Paouris.
\newblock A stability result for mean width of l p -centroid bodies.
\newblock {\em Advances in Mathematics}, 214:865--877, 2007.

\bibitem[GLPP14]{GUE14}
Olivier Guédon, Anna Lytova, Alain Pajor, and Leonid Pastur.
\newblock The central limit theorem for linear eigenvalue statistics of the sum
  of independent matrices of rank one.
\newblock {\em American Mathematical Society Translations: Series 2},
  233(1):145--164, 2014.

\bibitem[GM83]{gromov1983topological}
Mikhael Gromov and Vitali~D Milman.
\newblock A topological application of the isoperimetric inequality.
\newblock {\em American Journal of Mathematics}, 105(4):843--854, 1983.

\bibitem[Hua17]{HUA17}
Hanwen Huang.
\newblock Asymptotic behavior of support vector machine for spiked population
  model.
\newblock {\em Journal of Machine Learning Research}, 18(45):1--21, 2017.

\bibitem[HW71]{HW71}
D.~L. Hanson and F.~T. Wright.
\newblock A bound on tail probabilities for quadratic forms in independent
  random variables.
\newblock {\em Annals of Mathematical Statistics}, 42(3):1079--1083, 1971.

\bibitem[Jam57]{JAM57}
Robert~C. James.
\newblock Reflexivity and the supremum of linear functionals.
\newblock {\em Annals of Mathematics}, 66(1):159 -- 169, 1957.

\bibitem[Kla07]{KLA07}
B.~Klartag.
\newblock A central limit theorem for convex sets.
\newblock {\em Inventiones mathematicae}, 168:91–131, 2007.

\bibitem[Lat06]{latala2006estimates}
Rafa{\l} Lata{\l}a.
\newblock Estimates of moments and tails of gaussian chaoses.
\newblock {\em The Annals of Probability}, 34(6):2315--2331, 2006.

\bibitem[LC18]{louart2018concentration}
Cosme Louart and Romain Couillet.
\newblock Concentration of measure and large random matrices with an
  application to sample covariance matrices.
\newblock {\em arXiv preprint arXiv:1805.08295}, 2018.

\bibitem[LC21]{louart2021spectral}
Cosme Louart and Romain Couillet.
\newblock Spectral properties of sample covariance matrices arising from random
  matrices with independent non identically distributed columns.
\newblock {\em arXiv preprint arXiv:2109.02644}, 2021.

\bibitem[Led05]{LED05}
Michel Ledoux.
\newblock {\em The concentration of measure phenomenon}.
\newblock Number~89. American Mathematical Soc., 2005.

\bibitem[Lou22]{louart2022sharp}
Cosme Louart.
\newblock Sharp bounds for the concentration of the resolvent in convex
  concentration settings.
\newblock {\em arXiv preprint arXiv:2201.00284}, 2022.

\bibitem[Lé51]{Le51}
Paul Lévy.
\newblock {\em Problemes concrets d'analyse fonctionnelle.}
\newblock Gauthier-Villars, 1951.

\bibitem[MLC19]{MAI19}
Xiaoyi Mai, Zhenyu Liao, and Romain Couillet.
\newblock A large scale analysis of logistic regression: Asymptotic performance
  and new insights.
\newblock {\em ICASSP 2019 - 2019 IEEE International Conference on Acoustics,
  Speech and Signal Processing}, 2019.

\bibitem[Pas05]{PAS05}
Leonid Pastur.
\newblock A simple approach to the global regime of gaussian ensembles of
  random matrices.
\newblock {\em Ukrainian Mathematical Journal}, 57(6):936--966, 2005.

\bibitem[PP09]{PAJ09}
Alain Pajor and Leonid Pastur.
\newblock On the limiting empirical measure of the sum of rank one matrices
  with log-concave distribution.
\newblock {\em Studia Mathematica, Institut Matematyczny Polska Akademia nauk},
  195:11--29, 2009.

\bibitem[SB95]{SIL95}
J.~W. Silverstein and Z.~D. Bai.
\newblock {On the empirical distribution of eigenvalues of a class of large
  dimensional random matrices}.
\newblock {\em Journal of Multivariate Analysis}, 54(2):175--192, 1995.

\bibitem[SLTC21]{SED21}
Mohamed El~Amine Seddik, Cosme Louart, Mohamed Tamaazousti, and Romain
  Couillet.
\newblock The unexpected deterministic and universal behavior of large softmax
  classifiers.
\newblock {\em AISTATS}, 2021.

\bibitem[STC19]{SED19}
Mohamed El~Amine Seddik, Mohamed Tamaazousti, and Romain Couillet.
\newblock Kernel random matrices of large concentrated data : the example of
  gan-generated images.
\newblock {\em ICASSP'19}, 2019.

\bibitem[Tal88]{TAL88}
Michel Talagrand.
\newblock An isoperimetric theorem on the cube and the kintchine-kahane
  inequalities.
\newblock {\em Proceedings of the American Mathematical Society},
  81(3):73–--205, 1988.

\bibitem[Tal95]{TAL95}
Michel Talagrand.
\newblock Concentration of measure and isoperimetric inequalities in product
  spaces.
\newblock {\em Publications mathématiques de l'IHÉS}, 104:905--909, 1995.

\bibitem[Ver17]{VER17}
R.~Vershynin.
\newblock {\em High dimensional probability}.
\newblock Cambridge University Press, 2017.

\bibitem[VW14]{VU14}
Van Vu and Ke~Wang.
\newblock Random weighted projections, random quadratic forms and random eigen
  vectors.
\newblock {\em Random Structures and Algorithms}, 2014.

\end{thebibliography}

\end{document}